\theoremstyle{plain}
\newtheorem{thm}{Theorem}[section]
\newtheorem*{thm*}{Theorem}
\newtheorem{defn}[thm]{Definition}
\newtheorem{prop}[thm]{Proposition}
\newtheorem{cor}[thm]{Corollary}
\newtheorem{lem}[thm]{Lemma}
\newtheorem{remark}[thm]{Remark}
\newcounter{constK}
\renewcommand{\theconstK}{{{\kappa}_{\arabic{constK}}}}
\newcommand{\constK}{\refstepcounter{constK}\theconstK}
\def\R{{\mathbb{R}}}
\def\N{{\mathbb{N}}}
\def\H{{\mathbb{H}}}
\def\Ad{{\mathrm{Ad}}}
\def\s{{\mathbf{s}}}
\def\c{{\mathbf{c}}}
\def\v{{\mathbf{v}}}
\def\vol{{\mathrm{vol}}}
\def\inj{{\mathrm{inj}}}
\def\SO{{\mathrm{SO}}}
\def\SL{{\mathrm{SL}}}
\def\Mat{{\mathrm{Mat}}}
\def\Lie{{\mathrm{Lie}}}
\def\dim{{\mathrm{dim}}}
\def\hdim{{\mathrm{Hdim}\text{ }}}
\newcommand {\norm}[1] {\left\| {#1} \right\|}
\def\be{\begin{equation}}
\def\ee{\end{equation}}
\newcommand {\absolute}[1] {\left| {#1} \right|}
\newcommand\eng{\mathcal E}
\DeclareMathOperator{\diff}{d} 
\DeclareMathOperator{\diam}{diam}
\newcounter{constE}
\renewcommand{\theconstE}{{{C}_{\arabic{constE}}}}
\newcommand{\constE}{\refstepcounter{constE}\theconstE}
\newcommand{\unm}{\rho}
\newcommand{\cpct}{\mathsf K}
\title{Quantitative finiteness of hyperplanes in hybrid manifolds}
\author{Ko W. Ohm}
\address{Department of Mathematics, University of California San Diego\\ 9500 Gilman Dr, La Jolla,
CA 92093, USA}
\email{kwohm@ucsd.edu}
\author{Anthony Sanchez}
\address{Department of Mathematics, University of California San Diego\\ 9500 Gilman Dr, La Jolla,
CA 92093, USA}
\email{ans032@ucsd.edu}
\subjclass[2020]{Primary 22F30; Secondary 37D40, 22E40\\
\emph{Key words and phrases: Homogeneous dynamics, Margulis function, hyperbolic manifolds.}}
\begin{document}

\maketitle

\begin{abstract}
We prove a quantitative finiteness theorem for the number of totally geodesic hyperplanes of non-arithmetic hyperbolic $n$-manifolds that arise from a gluing construction of Gromov and Piatetski-Shapiro for $n\ge3$. This extends work of Lindenstrauss-Mohammadi in dimension 3. This follows from effective density theorem for periodic orbits of $\SO(n-1,1)$ acting on quotients of $\SO(n,1)$ by a  lattice for $n\ge3$. The effective density result uses a number of a ideas including Margulis functions, a restricted projection theorem, and an effective equidistribution result for measures on the horospherical subgroup that are nearly full dimensional.
\end{abstract}

\section{Introduction}

The geometry of non-arithmetic hyperbolic manifolds is mysterious in spite of how plentiful they are.  McMullen and Reid independently conjectured that such manifolds have only finitely many totally geodesic hyperplanes. In the recent advances of Bader--Fisher--Miller--Stover \cite{MR4250391} for dimensional at least three and Margulis-Mohammadi \cite{MR4374970} for dimension three, they show a key feature that non-arithmetic hyperbolic manifolds satisfy. That is, they show that there are only finitely many totally geodesic hyperplanes of codimension 1. Their works rely on superrigidity theorems and are not constructive. 

In the 1980's, Gromov--Piatestski-Shapiro \cite{MR0932135} introduced a construction that produces non-arithmetic hyperbolic manifolds from two non-commensurable arithmetic manifolds. Our main result strengthens the results of Bader--Fisher--Miller--Stover \cite{MR4250391}  and Margulis-Mohammadi \cite{MR4374970} and gives a quantitative finiteness result for the number of totally geodesic hyperplanes of codimension 1 of Gromov--Piatestski-Shapiro hybrid manifolds. Throughout we assume $n\ge3$.

\begin{thm}\label{thm:count}
    Let $M$ be a non-arithmetic hyperbolic $n$-manifold obtained from the Gromov--Piatestski-Shapiro gluing construction. The number of totally geodesic hyperplanes of codimension 1 in $M$ is at most
    $$L(\vol_{n-1}(\Sigma)\vol_n(M)\eta_X^{-1}\kappa_X^{-1})^{L/\constK\label{K:count}}$$
    where $L=L(n)$, $\ref{K:count}=O(\kappa_X^2)$ and where $\kappa_X$ is the rate of decay of correlations of the geodesic flow on the tangent bundle of $M$ (see Corollary~\ref{Cor:Decay of Cor}) and $\eta_X$ is related to the Margulis constant of $M$, see Equation~\ref{eqn: Margulis constant}. 
\end{thm}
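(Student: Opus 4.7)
The plan is to convert the geometric count into a count of closed $H$-orbits on $X=\Gamma\backslash G$, where $G=\SO(n,1)$, $\Gamma<G$ is the lattice with $M=\Gamma\backslash\H^n$, and $H=\SO(n-1,1)$ is embedded as the stabilizer of a totally geodesic hyperplane. Under the standard dictionary, a codimension-one totally geodesic hyperplane $\Sigma\subset M$ lifts to a closed $H$-orbit $Y_\Sigma=x_\Sigma H\subset X$, and the associated volumes satisfy $\vol_H(Y_\Sigma)\asymp_n\vol_{n-1}(\Sigma)$. It therefore suffices to bound, for each $V>0$, the number of closed $H$-orbits on $X$ of volume at most $V$, and then to specialize to $V\asymp\vol_{n-1}(\Sigma)$.

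I would then feed in the effective density theorem for $H$-periodic orbits advertised in the abstract: any closed $H$-orbit $Y$ of volume $V$ is $\epsilon$-dense in a fixed fat compact piece of $X$ as soon as $V$ exceeds a power of $\epsilon^{-1}$ whose exponent is polynomial in $\kappa_X^{-1}$ and $\eta_X^{-1}$. Armed with this single-orbit density statement, the next step is a pairwise separation statement. If two distinct closed $H$-orbits $Y_1\ne Y_2$ approach each other within distance $\delta$ at some point, the displacement between their basepoints lies near the normalizer $N_G(H)$ but not in $H$, producing an approximate extra invariance. Combined with the non-arithmeticity supplied by the Gromov--Piatetski-Shapiro hybrid structure, together with the qualitative rigidity of Bader--Fisher--Miller--Stover and Margulis--Mohammadi, this approximate extra invariance must be upgraded to an honest one, forcing $Y_1=Y_2$ unless $\delta$ is bounded below by an explicit polynomial in $V^{-1}$, $\eta_X$ and $\kappa_X$.

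Once this quantitative separation is in hand, the number of closed $H$-orbits of volume at most $V$ is bounded above by the packing number of $\delta$-balls inside a compact piece of $X$ of volume $\asymp\vol_n(M)$, up to a harmless multiplicative factor accounting for frame-bundle multiplicity. Tracking how $\delta$ depends on $V$, $\vol_n(M)$, $\eta_X$ and $\kappa_X$ through the density-to-separation reduction yields a bound of the shape $L\bigl(\vol_{n-1}(\Sigma)\vol_n(M)\eta_X^{-1}\kappa_X^{-1}\bigr)^{L/\ref{K:count}}$, which is the stated estimate.

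The main obstacle will be upgrading the effective density theorem, which a priori controls one orbit at a time, into the uniform pairwise separation needed here, while simultaneously controlling cusp excursions by a Margulis function whose constants are quantified by $\eta_X$. The quadratic dependence $\ref{K:count}=O(\kappa_X^2)$ in the exponent is expected to arise by composing the polynomial rate of effective equidistribution on horospherical leaves with the polynomial rate at which the Margulis function decays along horocyclic pieces; keeping these two exponents explicit rather than absorbing them into untracked constants, and doing so uniformly over the family of closed $H$-orbits, is where the bulk of the technical bookkeeping lies.
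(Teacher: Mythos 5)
Your reduction to counting closed $H$-orbits is the right starting point, but the core of the argument is missing: you never establish an \emph{upper bound on the volume} of the closed $H$-orbits corresponding to totally geodesic hyperplanes, and without it your plan does not close. Note first that $\Sigma$ in the statement is the fixed gluing hypersurface of the Gromov--Piatetski-Shapiro construction, not a generic hyperplane of $M$; the hyperplanes being counted have no a priori volume bound, so "specializing to $V\asymp\vol_{n-1}(\Sigma)$" is not available. The paper's proof hinges precisely on producing such a bound: by effective density (Theorem~\ref{Thm:EffDen}), any closed orbit $Hx$ of sufficiently large volume comes within $\ref{C:main}\vol(Hx)^{-\ref{K:den}}$ of base points inside both glued pieces $N_1,N_2$, so the corresponding hyperplane must cross $\Sigma$; angular rigidity for hybrids (\cite{FLMS}) then forces the crossing to be orthogonal; an orthogonally crossing hyperplane avoids an explicit open set $O$ in the frame bundle (Proposition 5.1 of \cite{FLMS}), and applying effective density once more to a ball of radius $\rho\gg\vol(\Sigma)^{-2D}$ inside $O$ yields $\ref{C:main}\vol(Hx)^{-\ref{K:den}}>.5\rho$, i.e.\ an explicit upper bound on $\vol(Hx)$. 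Only then does one count, by invoking the quantitative bound on the number of closed orbits of bounded volume (Theorem 5 of \cite{SS22}). Your proposal uses non-arithmeticity only through a vague appeal to superrigidity, whereas the quantitative mechanism is this density/angular-rigidity tension.

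The second step of your plan, pairwise separation of distinct closed orbits followed by a packing count, is also not substantiated as written. The claim that a small displacement between two closed $H$-orbits "lies near $N_G(H)$" is false in general (it is just a small element of $G$), and upgrading an approximate invariance to an exact one with polynomial dependence on $V^{-1}$, $\eta_X$, $\kappa_X$ is itself a hard quantitative isolation theorem (this is essentially Theorem 4 of \cite{SS22}, which the paper uses only to lower-bound the radii $r$ and $\rho$, not to separate orbits pairwise). Even granting such a separation, a packing bound counts orbits of volume at most $V$ for each fixed $V$, which returns you to the missing uniform volume bound. To repair the proposal you need to insert the crossing/orthogonality/avoidance argument to cap the volumes, and replace the separation-plus-packing step by the bounded-volume orbit count of \cite{SS22}.
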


The $n=3$ case is recent work of Lindenstrauss--Mohammadi \cite{MR4549089}.  Related work of Belolipetsky--Bogachev--Kolpakov--Slavich \cite{BBKS23} prove a criterion for arithmeticity of hyperbolic manifolds depending on the number of fc-subspaces and showed an upper bound of the number of fc-subspaces that is linear in the volume in the non-arithmetic case.

The conclusion of Theorem \ref{thm:count} giving an upper bound to the  totally geodesic hyperplanes of codimension 1 applies to the examples of Raimbault \cite{10.1093/imrn/rns151} and Gelander--Levit \cite{Commclasses}. See Remark \ref{rmk:extending} for more information.

Let $d$ be a right invariant metric on the frame bundle of the universal cover of $M$  defined by using the Killing form. This induces the hyperbolic metric on $\mathbb H^{n+1}$ and the metric on $M$. This in turn gives rise to a natural volume form on $M$ and its submanifolds. Going forward we drop the subscript on volume and by ``$\vol$" we mean the one induced from the metric of the correct dimension.

The proof of Theorem~\ref{thm:count} relies on a number of ideas including a new dynamical result of effective density of periodic orbits on lattice quotients of $\SO(n,1)$ (Theorem \ref{Thm:EffDen}) and a new result on a restricted projection problem (Theorem~\ref{thm:ProjSO(n,1)}) as well as techniques involving Margulis functions, ideas of geometric measure theory (coarse dimension), and an equidistribution theorem for large measures of simple Lie groups that behave horospherically (Theorem~\ref{thm:VenkateshSO(n,1)}).

For $n\ge3$, let $\SO(n,1)$ denote the group of determinant one matrices that preserve the quadratic form $$Q_0(x_1,\ldots,x_{n+1})=2x_1x_{n+1}-\sum_{k=2} ^n x_k^2$$ and $G=\SO^\circ(n,1)$ denote the connected component of the identity. Geometrically, $G$ can be thought of as the oriented frame bundle of hyperbolic space $\mathbb H^n$.

Let $H= \textrm{Stab}_G(e_n)\simeq\SO^\circ(n-1,1)$ (see also Section 2) where $n\ge3$ and $\Gamma$ be a lattice of $G$. We are interested in the action of $H$ on $X=G/\Gamma$. Note that closed orbits of $H=\SO(n-1,1)$ in $X$ map to totally geodesic hyperplanes of codimension 1 in the hyperbolic orbifold $\mathbb H^n/\Gamma$ under the quotient map $\pi:\mathbb H^n\to \mathbb H^n/\Gamma.$ Let $X_\eta$ denote the part of $X$ where the injectivity is bounded below by $\eta$.

Perhaps surprisingly, Theorem~\ref{thm:count}  follows from Angular Rigidity of the Gromov--Piatestski-Shapiro hybrid manifolds (proved in \cite{FLMS}) and the following new effective density statement of $H$-orbits.

\begin{thm}\label{Thm:EffDen}
    Let $Y\subset X$ be a periodic $H$-orbit in $X$. Then for every $x\in X_{\vol{(Y)}^{-\constK\label{K:den}}}$ we have
    $$d_X(x,Y)\le \constE\label{C:main} \vol(Y)^{-\ref{K:den}}$$
    where $\ref{K:den}\gg\kappa_X^2 $ and $\ref{C:main}=\ref{C:main}(\kappa_X,\vol(X),\eta_X)$. Here, $\kappa_X$  is rate of the decay of correlations of the geodesic flow on $X$ and $\eta_X$ is related to the Margulis constant of $M$, see Equation~\ref{eqn: Margulis constant}. 
\end{thm}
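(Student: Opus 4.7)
My plan is to argue by contradiction. Suppose there exists $x_0\in X_{\vol(Y)^{-\kappa_1}}$ with $d_X(x_0,Y)>C_1\vol(Y)^{-\kappa_1}$ (writing $\kappa_1$ for the exponent $\ref{K:den}$ and $C_1$ for $\ref{C:main}$ of the statement), and produce a probability measure on $X$ supported in a $\vol(Y)^{-\kappa_1}$-neighborhood of $Y$ that nonetheless charges $B(x_0,\vol(Y)^{-\kappa_1})$; this contradicts the assumption since the support does not meet that ball. Let $\{a_t\}\subset H$ be the one-parameter geodesic flow subgroup and $U^+\subset G$ its expanding horospherical subgroup, decomposed as $U^+=U^+_H\oplus U^+_V$ with $U^+_H\subset H$ tangent to $Y$ and $U^+_V$ transverse to it. Starting from the $H$-invariant probability measure $\mu_Y$ on $Y$, I mollify in the $U^+_V$ direction to obtain a measure $\mu$ supported in a small neighborhood of $Y$, and then iterate pushforward by $a_t$ for time $t\sim\kappa_X^{-1}\log\vol(Y)$.

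The first ingredient is a Margulis function $\mathcal F$ on $X$ satisfying an averaging contraction inequality under the $U^+$-flow, which combined with the decay of correlations of $a_t$ (Corollary~\ref{Cor:Decay of Cor}) keeps a positive proportion of the mass of $(a_t)_*\mu$ inside $X_{\vol(Y)^{-\kappa_1}}$ throughout the evolution. This bookkeeping is essential because the subsequent equidistribution step only places mass in thick balls.

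The second ingredient is a dimension upgrade in the $U^+$-direction. The initial measure $\mu$ has transverse dimension strictly less than $\dim U^+_V$. Since the projections from $U^+$ to the transverse complement of $H$ vary with the base point in a way not covered by classical Marstrand-type theorems, I invoke Theorem~\ref{thm:ProjSO(n,1)}, the restricted projection theorem tailored to $\SO(n,1)$, to upgrade the transverse dimension; iterating the upgrade along the $a_t$-orbit while using the Margulis function to remain in the thick part produces a measure with dimension arbitrarily close to $\dim U^+$ after bounded iterations, with loss depending polynomially on $\kappa_X$. Feeding this nearly full-dimensional measure into Theorem~\ref{thm:VenkateshSO(n,1)}, the effective equidistribution theorem for horospherical measures, shows that $(a_t)_*\mu$ is polynomially close to Haar measure at scale $\vol(Y)^{-\kappa_1}$; in particular $B(x_0,\vol(Y)^{-\kappa_1})$ is charged while $(a_t)_*\mu$ is supported in a $\vol(Y)^{-\kappa_1}$-neighborhood of $Y$, giving the contradiction.

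The main obstacle will be the three-way exponent balance between dimension gain from Theorem~\ref{thm:ProjSO(n,1)}, mass loss controlled by the Margulis function, and the dimension threshold needed for Theorem~\ref{thm:VenkateshSO(n,1)}; each of these exponents is governed by $\kappa_X$, and closing them simultaneously is what forces the quadratic bound $\ref{K:den}\gg\kappa_X^2$ stated in the theorem. A secondary subtlety is transporting the restricted projection theorem, whose natural home is in Euclidean (Lie algebra) coordinates, to statements on $X$ via the exponential map; this requires injectivity-radius control and is precisely where the thickness hypothesis $x_0\in X_{\vol(Y)^{-\ref{K:den}}}$ enters.
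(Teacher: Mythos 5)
There is a genuine gap, and it sits at the heart of the argument: the source of the transverse dimension. You propose to start from $\mu_Y$ \emph{mollified} in the direction $V$ transverse to $H$, and then to bootstrap the transverse dimension by iterating the projection theorem along the $a_t$-flow. But a mollification carries no dynamical information, and it forces an unavoidable dilemma. If the mollification scale $\epsilon$ is large enough that $e^t\epsilon\sim 1$ (which is what equidistribution at the target scale requires), then after flowing, the support of $(a_t)_*\mu$ fills a unit-size neighborhood in the $V$-direction and is no longer contained in a $\vol(Y)^{-\ref{K:den}}$-neighborhood of $Y$ --- so the "support stays near $Y$" half of your contradiction fails. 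If instead $\epsilon$ is small enough that $e^t\epsilon\le\vol(Y)^{-\ref{K:den}}$, the measure never spreads transversally and cannot charge a ball around an arbitrary thick point, so Theorem~\ref{thm:VenkateshSO(n,1)} is not applicable (its hypothesis is a measure of dimension $\ge 1-\varepsilon_0$ on a \emph{unit} interval in the $V$-direction). The only way out is the one the paper takes: the transverse spread must come from the periodic orbit itself. The Margulis function $f_Y(y)=\sum_{v\in I_Y(y)}\|v\|^{-\delta}$ is not a non-divergence device here; its integrability (Theorem~\ref{thm:VolBound}) together with a pigeonhole argument (Proposition~\ref{prop:input}) produces a finite set $F\subset\mathfrak r$ of \emph{actual return vectors}, with $\#F\gg\vol(Y)$, coarse dimension $\delta$ close to $1$, and $\exp(F)y_0\subset Y$. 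Because these are genuine points of $Y$, applying $a_tu_\s\in H$ keeps them within $O(b_1)$ of $Y$ (after Baker--Campbell--Hausdorff bookkeeping, Proposition~\ref{prop:prop2}), while the projection theorem transfers their dimension into the expanding $V$-coordinate, yielding the measure $\rho$ on $[0,1]$ that feeds into Theorem~\ref{thm:VenkateshSO(n,1)}. Non-divergence is handled separately by the height function and Proposition~\ref{prop:nondiv}, not by the Margulis function.

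A secondary inaccuracy: the projection theorem is applied once, not iterated. The exponent $\delta$ in the Margulis function can be chosen close to $1$ from the outset (Lemma~\ref{Lem:LA1} works for any $\delta\in(1/2,1)$), so after a single localization (Lemma~\ref{lem:Localize}) and a single application of Theorem~\ref{thm:ProjSO(n,1)} the projected set already has coarse dimension $\ge 1-\varepsilon_0$. The iterative dimension-bootstrap you describe is the mechanism of the effective unipotent equidistribution results of Lindenstrauss--Mohammadi--Wang, not of this argument. Your contradiction framing is harmless (it is equivalent to the paper's direct argument), and your identification of the three ingredients is correct at the level of the introduction's outline, but the proposal as written does not supply the step that actually generates dimension transverse to $H$, which is the essential new content of the proof.
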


If $\Gamma$ is a congruence lattice, then the rate $\ref{K:den}$ is absolute. Notably, we make no assumption on arithmeticity of $\Gamma$ in Theorem \ref{Thm:EffDen}. In the case that $\Gamma$ is arithmetic, Theorem \ref{Thm:EffDen} is a special case of work by Einsiedler--Margulis--Venkatesh~\cite{MR2507639} or Einsiedler--Margulis--Mohammadi--Venkatesh~\cite{MR4066475}. Furthermore, when $G=\SO(3,1)$, Theorem \ref{Thm:EffDen} was proved in Lindenstrauss--Mohammadi \cite{MR4549089}.

The main purpose of this article is to use effective density of periodic orbits to count totally geodesic subspaces of Gromov--Piatestski-Shapiro hybrid manifolds. However, it is plausible that an effective equidistribution result such as Theorem 1.1 of \cite{LMW2} is in reach for $\SO(n,1)$. See also \cite{ZL24} and \cite{LMWY25} for results using similar ideas. 

\textbf{Overview of effective density}. The proof of Theorem \ref{Thm:EffDen} will take up the majority of this paper. As such, we give an outline of the proof of Theorem \ref{Thm:EffDen}. We follow the strategy of Lindenstrauss--Mohammadi \cite{MR4549089}. 

Let $A=\{a_t = \text{diag}(e^{t},1,\ldots,1,e^{-t})\in\Mat(n+1):t\in \R\}$ be a 1-parameter family of diagonalizable elements. Let $N$ denote the unipotent subgroup that geometrically corresponds to the expanding horosphere of $a_t$. Explicitly, 
$$N:=\left\{ n(\mathbf s,r):=\left(
			\begin{matrix}
				1 & (\mathbf s,r) & \frac{1}{2}\|(\mathbf s,r)\|^2   \\
				\mathbf{0_{n-1}} ^T & \mathrm{Id}_{n-1}  & (\mathbf s,r)^T \\
				0 &  \mathbf{0_{n-1}}  & 1 
			\end{matrix}  
		\right)\in\Mat(n+1,\mathbb R):(\mathbf{s},r)\in \mathbb R^{n-1} \right\}$$
 where $\|\cdot\|$ denotes the standard Euclidean norm on $\mathbb R^{n-1}$.
  
  Let $u_\s := n(\s,0)$ denote the part of $N$ inside of $H$ and $v_r:=n(\mathbf{0},r)$ denote the remaining horospherical direction. Throughout we normalize so that $B_1 ^U:=\{u_\s:\|s\|\le1\}$ has Lebesgue measure 1.

  We write the Lie algebra of $\SO(n,1)$ as $\Ad(H)$-invariant subspaces $\text{Lie}(H)\oplus \text{Lie}(H)^\perp$ and denote the orthogonal complement with respect to the Killing form  as $\mathfrak r :=\text{Lie}(H)^\perp.$  This decomposition is  $\Ad(H)$-invariant by our choice of $H$. The vector space $\mathfrak{r}\simeq \mathbb R^{n}$ can be naturally partitioned into three types of directions: the expanding direction of $A$ where lengths of vectors increase under the $A$-action, the contracting directions of $A$ where lengths of vectors decrease under the $A$-action, and the neutral direction where the length of vectors doesn't change under the $A$-action. 

Step 1: The first step is the construction of a Margulis function $f_Y$ associated to a closed orbit $Y=Hx$. In Theorem \ref{thm:VolBound} we prove that $f_Y$ is integrable with respect to the invariant probability measure $\mu_Y$ on $Y$ with a bound comparable to $\vol(Y)$.

By using integrability of $f_Y$ and a pigeonhole argument, in Proposition \ref{prop:input}, we find a finite subset $F$ of $\mathfrak {r}$ with cardinality comparable to $\vol(Y)$ such that it has coarse dimension nearly 1 and a point $y\in Y$ such that $\exp(F)y\subset Y$. For a precise definition see Equation \ref{eqn: coarse dim} and see also Definition 5.1 of Bourgain--Furman--Lindenstrauss--Mozes \cite{BFLM}.  Since $F\subset \mathfrak r$ and $\exp(F)y\subset Y$, then we interpret this as a dimension gain in the transversal direction to $H$. In fact, for technical reasons that we omit in this overview, we must further zoom into the set $F$ so that the error terms arising from the contracting and neutral directions are negligible in Step 3. This step requires work of Bourgain--Furman--Lindenstrauss--Mozes \cite{BFLM}. See Section \ref{sec:input} for a discussion of this.

Step 2: We use a projection theorem to move the dimension gained in Step 1 into the remaining horospherical direction.

 Let $\xi_\s$ denote the projection onto the expanding direction of $\Ad(u_\s)$ applied to an element of $\mathfrak r$. That is,
$\xi_\s:\mathfrak r\simeq \mathbb R^{n}\to \mathbb R$ is given by $$\xi_\s(Z)=\big(\Ad(u_\s)Z\big)^+$$ where $\cdot^+$ denotes the projection of a vector onto the single dimension corresponding to the expanding direction. In the next section, we explicitly describe this map in a suitable coordinate system. The following projection theorem states that the set $F\subset \mathfrak{r}$ of coarse dimension $\delta$ can be projected onto the expanding direction and preserve the coarse dimension $\delta$.

\begin{thm}\label{thm:ProjSO(n,1)}
Let $0<\delta<1$ and $0<b_0< b_1<1$.
Let $F\subset B_{\mathfrak r}(0, b_1)$ be a finite set which satisfies the following   
\begin{equation}\label{eqn: coarse dim}
\frac{\#(B_{\mathfrak r}(Z, b)\cap F)}{\#F}\leq C (b/b_1)^\delta\qquad\text{for all $Z\in\mathfrak r$ and all $b\geq b_0$}     
\end{equation}

Let $0<\varepsilon<\delta/100$. There exists $B\subset B_1 ^U$ with 
\[
|B_1^U\setminus B|\leq 0.01|B_1 ^U|
\]
and for every $u_{\bf s}\in B$, there exists $F_{{\bf s}}\subset F$ with 
\[
\#(F\setminus F_{{\bf s}})\leq 0.01\cdot (\# F)
\]
such that the following holds. For all $Z\in F_{{\bf s}}$ and all $b\geq b_0$, we have 
\[
\#\{Z'\in F_{\s}: |\xi_{\bf s}(Z')-\xi_{\bf s}(Z)|\leq  b\}\leq C_\varepsilon  (b_0/b_1)^{-\varepsilon}\cdot  (b/b_1)^{\delta}\cdot (\# F)
\] 
where $C_\varepsilon$ depends on $C$, $\varepsilon$, and $n$.   
\end{thm}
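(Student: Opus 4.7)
The plan is to reduce Theorem~\ref{thm:ProjSO(n,1)} to an explicit formula for $\xi_\s$, a non-concentration estimate for the resulting polynomial, and a Fubini/Chebyshev energy argument.

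First, I would compute $\xi_\s$ explicitly. Decompose $\mathfrak r=\mathfrak r^+\oplus\mathfrak r^0\oplus\mathfrak r^-$ under $\Ad(A)$, with $\dim\mathfrak r^\pm=1$ and $\dim\mathfrak r^0=n-2$. Since $N$ is abelian, $\Ad(u_\s)$ fixes $\mathfrak r^+$ pointwise; and since the $\Ad(A)$-weights on $\mathfrak g$ are $\{-1,0,1\}$, the nilpotent $\mathrm{ad}(\log u_\s)$ is of order at most $2$ on $\mathfrak r$. Writing $Z=(z^+,z^0,z^-)$ in these coordinates yields
\[
\xi_\s(Z)=z^++c_1\langle\s,z^0\rangle+c_2\|\s\|^2 z^-
\]
for nonzero constants $c_1,c_2$. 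Thus $p(\s):=\xi_\s(Z-Z')$ is a polynomial of degree at most two in $\s\in\mathbb R^{n-2}$ with coefficients controlled by the corresponding components of $Z-Z'$.

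Next, I would establish the non-concentration estimate: for all $Z\ne Z'$ and all $b>0$,
\[
|\{\s\in B_1^U:|p(\s)|\le b\}|\le C_n\bigl(b/|Z-Z'|\bigr)^{\sigma}
\]
for some $\sigma=\sigma(n)>0$. For $n\ge 4$ this holds with $\sigma=1$ by a case analysis on which of the three coefficients of $p$ captures most of $|Z-Z'|$: a dominant quadratic coefficient produces an annulus in $\mathbb R^{n-2}$ of volume at most $Cb/|Z-Z'|$; a dominant linear coefficient produces a slab of the same width; a dominant constant term forces the sublevel set to be empty unless $b$ is comparable to $|Z-Z'|$. For $n=3$ the elementary Remez bound for the single-variable degree-two polynomial only yields $\sigma=1/2$, which is insufficient for $\delta>1/2$; in that range one must invoke a sum-product style discretized projection theorem, as carried out by Lindenstrauss--Mohammadi~\cite{MR4549089}.

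The non-concentration estimate is then fed into a Fubini/energy argument. Setting $N_\s(Z,b):=\#\{Z'\in F:|\xi_\s(Z'-Z)|\le b\}$, the estimate combined with a dyadic decomposition in $|Z-Z'|$ and the Frostman hypothesis~\eqref{eqn: coarse dim} yields, for each $b\in[b_0,b_1]$,
\[
\int_{B_1^U}\sum_{Z\in F}N_\s(Z,b)\,d\s\le C_\varepsilon(b_0/b_1)^{-\varepsilon}(b/b_1)^{\delta}(\#F)^2.
\]
A two-step Chebyshev argument then extracts the desired sets: first select $B\subset B_1^U$ of measure at least $0.99|B_1^U|$ on which $\sum_Z N_\s(Z,b)$ is close to its $\s$-average uniformly across dyadic $b\in[b_0,b_1]$; then for each such $\s$ excise the at most $1\%$ of $Z\in F$ that violate the pointwise bound at some dyadic scale, obtaining $F_\s$. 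The logarithmic union over dyadic scales $b\in[b_0,b_1]$ is absorbed into the $(b_0/b_1)^{-\varepsilon}$ factor.

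The main obstacle is the non-concentration exponent in low dimension. For $n=3$ the polynomial $p$ is quadratic in a single parameter and admits only the Remez exponent $1/2$, so the elementary energy argument breaks down once $\delta>1/2$. The extra $(n-2)$-dimensional parameter space for $n\ge 4$ is precisely what allows the simple argument to succeed across the full range $0<\delta<1$; the $n=3$ case is where one genuinely needs the Bourgain-type discretized projection theorem.
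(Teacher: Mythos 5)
Your proposal is correct, and for $n\ge4$ it takes a genuinely more elementary route than the paper. The paper never attempts a direct non-concentration estimate for the scalar projection $\xi_\s$; instead it factors $\pi_{s\s}(Z)=(1,s,s^2)\cdot f_\s(Z)$ through the map $f_\s(r_1,\c,r_2)=(r_1,\c\cdot L(\s),r_2q(\s))$ into $\R^3$ (Theorem~\ref{thm: main finitary}), shows via a transversality lemma (Lemma~\ref{lem: transversality of ft}) and a truncated-energy/Chebyshev argument (Lemma~\ref{lem: ave trun egy proj}, Proposition~\ref{prop: ft preserves dim}) that $f_\s$ preserves the coarse dimension for most $\s$, and then invokes the deep restricted projection theorem along the moment curve in $\R^3$ (\cite[Thm.~B]{MR4549089}, see also \cite{PYZ}, \cite{GGW}) for \emph{every} $n\ge3$. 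Your observation that for $n\ge4$ the full $(n-2)$-dimensional parameter space already yields the exponent-one sublevel-set bound for the scalar polynomial $\xi_\s(Z-Z')$ is correct --- the key point being that the annulus $\{|\,\|\s-\s_0\|^2-A|\le\epsilon\}$ in $\R^{n-2}$ has measure $\lesssim\epsilon$ exactly when $n-2\ge2$ --- and with it the classical Kaufman energy plus two-step Chebyshev argument closes the proof with no black box; the logarithmic losses from the dyadic decomposition and the union over scales are legitimately absorbed into the $(b_0/b_1)^{-\varepsilon}$ factor. For $n=3$ the statement is essentially the Lindenstrauss--Mohammadi theorem itself, so deferring to it is appropriate. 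What the paper's route buys is a treatment uniform in $n$ and an intermediate dimension-preservation statement for $f_\s$ of independent interest; what yours buys is a self-contained argument for $n\ge4$ that makes transparent why only the three-dimensional case genuinely requires the sum-product machinery. One small caution: in your ``dominant linear coefficient'' case the quadratic term need not be negligible, so the trichotomy should be organized by first testing whether $|z^-|$ is at least a fixed small multiple of $\|Z-Z'\|$ (annulus case, after completing the square) and only then applying the directional-derivative slab bound, with the two thresholds chosen consistently so that the residual ``constant-dominant'' case forces the sublevel set to be empty.
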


As such, a large subset of $F$ has that the projection under $\xi_\mathbf s$  has coarse dimension  at all scales $b>b_1 ^{-1}(\#F)^{-1}$.  Moreover, by Step 1 and Bourgain--Furman--Lindenstrauss--Mozes \cite{BFLM}, other terms have negligible error. Informally what this has accomplished is reduced the analysis of $a_tu_s\exp(F)y$ to that of $\exp(e^t\xi_\mathbf s(F))y_1$ with errors of order $Cb_1$ for a fixed constant $C$ and where $y_1=a_tu_\s\exp(w_0)y$ and $w_0$ is in the fiber of $y$. That is to say, we have reduced the analysis to just understanding the fiber in the expanding direction. Theorem \ref{thm:ProjSO(n,1)} is proved in Section \ref{sec:proj} in a more general context.

Step 3: We can now consider the counting measure on the fiber of the expanding direction $e^{t}\xi _\mathbf s (F)$. By Step 2, we know that this set has coarse dimension $\delta$ for any $\delta\in(1/2,1)$ on sufficiently large scales. In particular, after normalizing, there is a probability measure $\rho$ on $[0,1]$ (thought of as a subset of the expanding direction) which satisfies 
$$\rho(J)\le C b^\delta$$
for every interval $J$ of length $ b_1^{-1}\# F^{-1}< b$ and a constant $C\ge 1$.  That is, we have a measure on the horosphere in $\exp(\mathfrak r)$ with nearly full dimension. The following theorem shows that expanding translates of the Lebesgue measure on $U$ and the measure constructed on the horosphere in $\exp(\mathfrak r)$ equidistribute.

\begin{thm}\label{thm:VenkateshSO(n,1)}
There exists $\constK\label{K:Venk}\gg \kappa_X ^2$ and $\varepsilon_0\gg \kappa_X ^2$ such that the following holds. Let $0<b\le 1$. Let $\rho$ be a probability measure on $[0,1]$ which satisfies $$\rho(J)\le Cb^{\delta}$$ for every interval $J$ of length $b$ and a constant $C\ge1$. Let $\eta\in(0,1)$ and $x\in X_\eta$.

If $\delta\ge1-\varepsilon_0$, then for $|\log(b)/4|\le t\le |\log(b)/2|$ and 
 $f\in C_c^\infty(X)+\mathbb C \cdot1$, we have
    $$\left|\int_{[0,1]}\int_{B_1^U}f(a_t u_{\mathbf{s}}v_{r}x)d\mathbf{s}d\rho(r)-\int_Xf(x)dm_X(x)\right|\ll C^{1/2} S(f)e^{-\ref{K:Venk} t}\eta^{-L/2}.$$ 
\end{thm}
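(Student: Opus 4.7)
The plan is to follow a Venkatesh-style thickening-plus-mixing argument, adapted to the fact that the measure $\rho$ on $[0,1]$ is only coarsely $\delta$-dimensional at scale $b$ rather than Lebesgue. First I would convolve $\rho$ in the $v_r$-direction with a smooth nonnegative bump $\psi_\tau$ of scale $\tau\geq b$. Since $r\mapsto f(a_t u_{\mathbf s} v_r x)$ is $\ll e^t S(f)$-Lipschitz (because $a_t v_r a_{-t}=v_{e^t r}$), replacing $\rho$ by $\rho*\psi_\tau$ costs $O(\tau e^t S(f))$. The resulting smoothed density $g_\tau:=d(\rho*\psi_\tau)/dr$ satisfies $\|g_\tau\|_\infty\ll C\tau^{\delta-1}$ by the dimension hypothesis, and hence $\|g_\tau\|_{L^2(dr)}^2\ll C\tau^{\delta-1}$.

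Next, let $f_0:=f-\int f\,dm_X$ and
\[
\tilde F_t(x)\;:=\;\int_{0}^{1} g_\tau(r)\int_{B_1^U} f_0(a_t u_{\mathbf s} v_r x)\,d\mathbf s\,dr.
\]
I would bound $\|\tilde F_t\|_{L^2(X)}$ by unfolding in the abelian horospherical group $N\simeq\mathbb R^{n-1}$ via $a_t n(\mathbf w) a_{-t}=n(e^t\mathbf w)$, producing
\[
\|\tilde F_t\|_{L^2(X)}^2 \;=\; e^{-t(n-1)}\int_{\mathbb R^{n-1}} (h\star h)(\mathbf w/e^t)\,\langle n(\mathbf w)\cdot f_0,\,f_0\rangle\,d\mathbf w,
\]
where $h(r,\mathbf s):=g_\tau(r)\chi_{B_1^U}(\mathbf s)$ and $\star$ denotes autocorrelation on $\mathbb R^{n-1}$. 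Applying the Cartan decomposition $n(\mathbf w)=k_1 a_{\log|\mathbf w|}k_2$ together with matrix-coefficient decay imported from the decay of correlations of the geodesic flow (Corollary~\ref{Cor:Decay of Cor}) gives
\[
|\langle n(\mathbf w)\cdot f_0,\,f_0\rangle|\;\ll\; S(f)^2\,\eta^{-L}\,(1+|\mathbf w|)^{-\kappa_X}.
\]
Combined with $\|h\star h\|_\infty\le\|h\|_2^2\ll C\tau^{\delta-1}$ and compact support of $h\star h$, the slowly-decaying tail dominates to yield
\[
\|\tilde F_t\|_{L^2}^2\;\ll\;C\,\tau^{\delta-1}\,S(f)^2\,\eta^{-L}\,e^{-\kappa_X t}.
\]

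Finally I would pass from the $L^2$ estimate to a pointwise bound at $x\in X_\eta$: since $\tilde F_t$ is $\ll e^t S(f)$-Lipschitz on $X$ and $x$ has injectivity radius $\geq\eta$, averaging over a transverse ball of radius $r_0\le\eta$ gives $|\tilde F_t(x)|\ll r_0 e^t S(f) + r_0^{-\dim G/2}\|\tilde F_t\|_{L^2(X)}$. Choosing $\tau$ to equate the smoothing error with the $L^2$ gain, then $r_0$ for the Sobolev conversion, and restricting $t$ to the window $|\log b|/4\le t\le|\log b|/2$ so that $\tau\geq b$ remains admissible, produces the stated bound with $\ref{K:Venk}\gg\kappa_X^2$ and the factor $\eta^{-L/2}$.

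The main obstacle is deriving the matrix-coefficient bound $|\langle n(\mathbf w)\cdot f_0,f_0\rangle|\ll S(f)^2\eta^{-L}(1+|\mathbf w|)^{-\kappa_X}$ with the correct dependences on $\eta$ and $S(f)$: it has to be transferred from decay of correlations of the geodesic flow (a statement about $a_t$-matrix coefficients) to arbitrary $g\in G$ via Cartan decomposition and a careful Sobolev/thickening argument. The quadratic exponent $\kappa_X^2$ is the price of doing the optimization twice, once in $\tau$ and once in $r_0$: the $L^2$-bound already halves the rate, and the $L^2\to L^\infty$ conversion composed with the smoothing cost contracts the effective rate from $\kappa_X$ to $\Theta(\kappa_X^2)$.
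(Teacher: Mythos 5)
Your overall architecture (regularize $\rho$ at scale $\tau\geq b$, exploit the resulting density bound $\|g_\tau\|_\infty\ll C\tau^{\delta-1}$, then run a second-moment argument against mixing) is in the same family as the paper's, and your unfolding identity for $\|\tilde F_t\|_{L^2(X)}^2$ together with matrix-coefficient decay does yield $\|\tilde F_t\|_{L^2(X)}^2\ll C\tau^{\delta-1}S(f)^2e^{-c\kappa_X t}$ (though the $\eta^{-L}$ you attach to the matrix-coefficient bound is spurious: that estimate is a global $L^2(m_X)$ statement and does not see the injectivity radius). The genuine gap is the final step, converting the global $L^2(X)$ bound into a pointwise bound at a single $x\in X_\eta$ by averaging over a ball of radius $r_0$. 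With the Lipschitz constant you state, $\mathrm{Lip}(\tilde F_t)\ll e^tS(f)$, the error term $r_0e^tS(f)$ forces $r_0\ll e^{-(1+\kappa)t}$, and then the volume factor in the ball-averaging/Cauchy--Schwarz step is $r_0^{-\dim(G)/2}\gg e^{t\dim(G)/2}\geq e^{3t}$, which annihilates the $L^2$ saving $e^{-c\kappa_X t/2}$ since $\kappa_X$ is small. No joint choice of $\tau$ and $r_0$ closes this: the smoothing cost $\tau e^tS(f)$ already forces $\tau\ll e^{-(1+\kappa)t}$, so $\tilde F_t$ genuinely oscillates at scale $\lesssim e^{-t}$ in the $V$-direction of the base point (shifting $x$ by $v_{w_0}$ translates $g_\tau$ by $w_0$, and for $|w_0|\gg\tau$ the translated density can be essentially disjointly supported from the original), and an $L^2(X)$ bound with only an $e^{-O(\kappa_X)t}$ saving cannot be localized to a point across such oscillation.

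The paper avoids this entirely by never forming a global $L^2(X)$-norm. After replacing $\rho$ by a piecewise-constant density $\varphi\ll Cb^{-\varepsilon}$ on $b/4$-boxes and inserting Venkatesh's extra average over a short segment $[0,\tau]$ in a single $U$-direction, Cauchy--Schwarz is applied to the \emph{leafwise} integral $\int_N\varphi(n(\mathbf z))f(a_tn(\mathbf z)x)\,d\mathbf z$ at the fixed point $x$. This produces the pair-correlation functions $\hat f_{s_1,s_2}(y)=f(a_tu_{s_1}a_{-t}y)f(a_tu_{s_2}a_{-t}y)$ integrated over the full expanding leaf through $x$; the pointwise information (and the $\eta^{-L}$ factor) is then supplied by the known effective equidistribution of the full expanding horospherical leaf from a point of $X_\eta$, and mixing is used only to evaluate $\int_X\hat f_{s_1,s_2}\,dm_X$, splitting near and off the diagonal $|s_1-s_2|\le e^{-t}e^{t/2l}$. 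To repair your argument you would need to replace the ball-averaging conversion by exactly such a leafwise equidistribution input; as written the proof does not close.
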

Theorem \ref{thm:VenkateshSO(n,1)} is proved in a more general context in the Appendix in section \ref{section: Appendix A}. In particular, expanding translates of $\exp(e^t\xi_\mathbf s(F))y_1$ equidistribute in $X$. By noting that the range for the parameter $t$ depends on $b$ which is comparable to $\vol(Y)^{-1}$, we can complete the proof of Theorem \ref{Thm:EffDen}. The details can be found in Section \ref{section: proofs}.

\textbf{Acknowledgments.} The authors would like to thank Amir Mohammadi for many enlightening conversations. Additionally, they thank Zuo Lin and Pratyush Sarkar for helpful comments about an earlier version of the paper. When this work began, A.S. was supported by the National Science Foundation Postdoctoral Fellowship under grant number DMS-2103136.

\section{Preliminaries}\label{Preliminaries}
In this section we fix notation.

\textbf{Asymptotic notation.} We write $A\ll B$ or $A=O(B)$ if there is some constant $C>0$ such that $A\le CB$. We will use subscripts to indicate the dependence of the constant on parameters. We write $A\asymp B$ when the ratio lies in $[C^{-1},C]$ for a constant $C\ge1$.

\textbf{Hybrid manifolds.} In this section we briefly recall the construction of Gromov--Piatestski-Shapiro \cite{MR0932135}. Let $\Gamma_1$ and $\Gamma_2$ be two torsion free lattices in $\SO(n,1)$. Then $M_i=\mathbb H^n/\Gamma_i$ are hyperbolic manifolds (i.e. complete Riemannian manifolds of constant negative curvature). We assume there exists connected submanifolds $N_i\subset M_i$ of dimension $n$ with boundary such that\begin{itemize}
    \item Every connected component of the hypersurface $\partial N_i\subset M_i$ is a totally geodesic embedded hypersurface in $M_i$ which separates $M_i$. In particular, every connected component of $\partial N_i$ is an $(n-1)$-dimensional hyperbolic manifold. 
    \item $\partial N_1$ and $\partial N_2$ are isometric.
\end{itemize}
Let $M$ be the manifold obtained by gluing $N_1$ and $N_2$ using the isometry between $N_1$ and $N_2$. Then $M$ carries a complete hyperbolic metric and, if $\Gamma_1$ and $\Gamma_2$ are arithmetic and non-commensurable, then $M$ is non-arithmetic \cite{MR0932135}.

\textbf{Lie groups, Lie algebras, and norms.} Recall, $G=\SO^\circ(n,1)$ and $H=\SO^\circ(n-1,1)$. We can think about $H$ as the group of determinant one matrices that preserve the restriction of the form $Q_0$ to $\R e_1\oplus\cdots\oplus\R e_{n-1}\oplus \{0\}\oplus \R e_{n+1}$. Fix $\H^{n-1}\subset \H^n$ with an orientation so that $H=\{g\in G:g(\mathbb H^{n-1})=\H^{n-1}\}$. Let $o\in\H^n$ be a reference point and $w_o\in \mathrm  T^1(\H^n)$ be a reference vector. Then $\H^n$ may be identified with $K\backslash G$ and $ \mathrm  T^1(\H^n)$ may be identified with $M\backslash G$ where $K$ is the stabilizer of $o$ under the action of $G$ and $M$ is the stabilizer of $w_o$ under the action of $G$. In fact, $M\simeq\SO(n)$. For $w=Mg\in \mathrm  T^1(\H^n)=M\backslash G$, let $w^{\pm}\in\partial \H^n$ denote the forward and backward endpoints of the geodesic that $w$ determines and $g^{\pm}:=w_o^{\pm}g\in \partial \H^n$.

Recall, $A=\{a_t = \text{diag}(e^{t},1,\ldots,1,e^{-t})\in\Mat(n+1):t\in \R\}$ and notice that since $M$ and $A$ commute, the right action of $A$ on $M\backslash G$ corresponds to the unit speed geodesic flow. The right action of $A$ on the frame bundle $G$ corresponds to the frame flow. Let $N$ denote the unipotent subgroup that geometrically corresponds to the expanding horosphere of $a_t = \text{diag}(e^t,1,\ldots,1,e^{-t})$. We describe $N$ explicitly after defining some notation; for $\mathbf{s}\in \mathbb R^{n-2}$ and $r\in\mathbb R$, we let $(\mathbf s,r)$ denote the vector of length $n-1$ given by concatenating $\mathbf s$ and $r$. Then
$$N:=\left\{ n(\mathbf s,r):=\left(
			\begin{matrix}
				1 & (\mathbf s,r) & \frac{1}{2}\|(\mathbf s,r)\|^2   \\
				\mathbf{0_{n-1}} ^T & \mathrm{Id}_{n-1}  & (\mathbf s,r)^T \\
				0 &  \mathbf{0_{n-1}}  & 1 
			\end{matrix}  
		\right)\in\Mat(n+1,\mathbb R):(\mathbf{s},r)\in \mathbb R^{n-1} \right\}$$
  where $\|\cdot\|$ denotes the standard Euclidean norm on $\mathbb R^{n-1}$.
Clearly, $N\simeq \mathbb R^{n-1}$. Additionally, define 
$$U:=N\cap H=\left\{ u_{\mathbf s}:=n(\mathbf s,0):\mathbf{s}\in \mathbb R^{n-2} \right\}\simeq \mathbb R^{n-2}\text{ and } V = \{v_r=n(\mathbf{0},r):r\in\mathbb R \}.$$ We have that $a_t n(\s,r)a_{-t}=n(e^{t}(\s,r))$.

Similarly, let 
  $$N^-:=\left\{ n^-(\mathbf s,r):=\left(
			\begin{matrix}
				1 & \mathbf{0_{n-1}} & 0   \\
				(\mathbf s,r)^T & \mathrm{Id}_{n-1}  & \mathbf{0_{n-1}} ^T  \\
				\frac{1}{2}\|(\mathbf s,r)\|^2  &  (\mathbf s,r)  & 1 
			\end{matrix}  
		\right)\in\Mat(n+1,\mathbb R):(\mathbf{s},r)\in \mathbb R^{n-1} \right\}$$
   where $\|\cdot\|$ denotes the standard Euclidean norm on $\mathbb R^{n-1}$.
Clearly, $N^-\simeq \mathbb R^{n-1}$. Recall that we normalize so that $B_1 ^U$ has Lebesgue measure 1. Additionally, define 
$$U^-:=N^-\cap H=\left\{ u^- _{\mathbf s}:=n^-(\mathbf s,0):\mathbf{s}\in \mathbb R^{n-2} \right\}\simeq \mathbb R^{n-2}\text{ and } V^- = \{v^- _r=n^-(\mathbf{0},r):r\in\mathbb R \}.$$

 The adjoint action of $H$ on the Lie algebra $\Lie(G)$ consists the invariant subspaces $\Lie(H)$ and the compliment that explicitly be written as 
$$\mathfrak r := \left\{Z(r_1,\mathbf c, r_2):=\left(
		\begin{matrix}
		0 & \mathbf{0_{n-2}} & r_1 &0   \\
		\mathbf{0_{n-2}}^T & \mathbf{0_{n-2}} & \mathbf{c}^T & \mathbf{0_{n-2}} ^T\\
        r_2 & -\mathbf c & 0 & r_1\\
        0 & \mathbf{0_{n-2}} & r_2 &0
		\end{matrix}  
		\right)\in\Mat(n+1,\mathbb R):r_1,r_2\in\R, \mathbf c \in \R^{n-2}\right\}.$$
  We have that $\mathfrak r$ is $n$-dimensional and we endow it with the max norm on $\mathbb R^d$. For $\beta\in(0,1)$, we define the ball about the origin of radius $\beta$ as
      $$B _\mathfrak{r} (0,\beta):=\{Z(r_1,\mathbf c, r_2)\in\mathfrak{r}:\|Z(r_1, \c, r_2)\|\le \beta \}.$$

We remark that adjoint action of $H$ on $\mathfrak{r}$ is isomorphic to the restriction of the linear action of $H$ to $e_n ^\perp\subset \mathbb R^{n+1}=\oplus_i \mathbb Re_i$. Indeed, consider the map 
\begin{equation}\label{eqn: compliment isomorphism}
    Z(r_1,\mathbf c, r_2)\mapsto r_1 e_1 + \c\cdot (e_2+\cdots+ e_{n-1})+ r_2 e_{n+1}
\end{equation}

Under this map, we have $\mathbb R e_1  = \{ Z(r_1,\mathbf{0},0):r_1\in\mathbb R\}$ is the expanding direction of $a_t$, $\mathbb R e_{n+1}  = \{Z(0,\mathbf{0},r_2):r_2\in \mathbb R\}$ is the contracting direction of $a_t$, and $\mathbb R e_2\oplus \cdots\oplus \mathbb R e_{n-1}  = \{Z(0,\mathbf{c},0):\mathbf{c}\in 
\mathbb R^{n-2}\}$ is the neutral direction of $a_t$. With respect to these coordinates we define $Z(r_1,\mathbf c, r_2)^+=r_1$.

In particular, the adjoint action on $\mathfrak r$ acts via
\begin{equation}\label{eqn:Adjointaction}
    \Ad(a_tu_\s)Z(r_1,\c,r_2)=Z(e^t(r_1+\s\cdot \c+r_2\|\s\|^2/2),\c+r_2\s,e^{-t}r_2).
\end{equation}

We will be particularly interested in the polynomial $\xi_\s:\mathfrak r\to\mathbb R$ given by
\begin{equation}\label{eqn:xi_s}
    \xi_\s(Z):= \left(\Ad(u_\s)Z(r_1,\c,r_2)\right)^+:=r_1+\s\cdot \c+r_2\|\s\|^2/2
\end{equation}
that tracks the expanding part of a vector under the Adjoint action of $u_\s$.

Note that $C_H(A) = AM'$ where $M'\simeq \SO(n-2)$.

We define a norm on $\Lie(H)$, by taking the max norm where the coordinates are given by $\Lie(N)$, $\Lie(N^-)$, $\Lie(A)$, $\Lie(M')$. By taking the maximum of the norms on $\Lie(H)$ and $\mathfrak r$, we get a norm on $\Lie(G)$ which we denote by $\|\cdot\|$. We note that $\Lie(H)$ and $\mathfrak r$ are orthogonal spaces.

For $\beta\in(0,1)$, we define the ball of radius $\beta$ about the identity in $H$ as
$$B_\beta ^H :=\{h\in H:\|h-I\|\le\beta\}.$$

We define $B_\beta ^G:= B_\beta ^H \cdot \exp(B_\mathfrak r (0,\beta))$.

\subsection{Baker–Campbell–Hausdorff formula}

We will need the following lemma.

\begin{lem}\label{lem: normal form}
    There exists absolute constants $\beta_0<1$ and $ \constE\label{C:normalform}\ge1$ depending only on the dimension of $G$ so that the following holds. Let $0<\beta\le \beta_0$ and $w_1,w_2\in B_\mathfrak r(0,\beta)$. There are $h\in H$ and $w\in\mathfrak r$ which satisfy
    $$.5\|w_1-w_2\|\le \|w\|\le 2\|w_1-w_2\|\text{ and } \|h-I\|\le \ref{C:normalform}\beta \|w\|$$
    so that $\exp(w_1)\exp(-w_2)=h\exp(w)$.
\end{lem}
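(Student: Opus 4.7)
The plan is to apply the Baker--Campbell--Hausdorff (BCH) formula and exploit the symmetric-pair structure $\Lie(G) = \Lie(H) \oplus \mathfrak{r}$. Recall that $\mathfrak{r}$ is the $(-1)$-eigenspace of $\Ad(J)$, where $J$ is the diagonal matrix with $-1$ in the $n$-th slot and $1$ elsewhere. Consequently $[\mathfrak{r}, \mathfrak{r}] \subset \Lie(H)$ and $[\Lie(H), \mathfrak{r}] \subset \mathfrak{r}$. Setting $Z := \log\bigl(\exp(w_1)\exp(-w_2)\bigr)$, the BCH series expresses $Z$ as $(w_1 - w_2)$ plus a sum of iterated commutators in $w_1$ and $-w_2$; brackets of odd depth lie in $\mathfrak{r}$, those of even depth in $\Lie(H)$. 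I decompose $Z = Z_H + Z_\mathfrak{r}$ accordingly.

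The key algebraic observation is $[w_1, w_2] = [w_1 - w_2, w_2]$, which lets me factor $\|w_1 - w_2\|$ out of every non-vanishing BCH bracket: any nested bracket that is not forced to be zero by skew-symmetry contains an innermost sub-bracket of the form $[w_i, w_j]$ with $i \neq j$, and on that sub-bracket I apply the identity. The remaining factors are each $w_1$, $w_2$, or $w_1 - w_2$, all bounded by $2\beta$, so every length-$k$ BCH term is $O(\beta^{k-1} \|w_1 - w_2\|)$. Summing the absolutely convergent series (for $\beta$ below a dimensional constant $\beta_0$) yields
\[
\|Z_\mathfrak{r} - (w_1 - w_2)\| \leq C \beta^2 \|w_1 - w_2\|, \qquad \|Z_H\| \leq C \beta \|w_1 - w_2\|.
\]

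To pass from $\exp(Z)$ to the product form $h\exp(w)$, I invert the local diffeomorphism $\Phi: H \times \mathfrak{r} \to G$, $(h, w) \mapsto h \exp(w)$, whose differential at $(I, 0)$ is the identity once $\Lie(G)$ is identified with $\Lie(H) \oplus \mathfrak{r}$. For $\beta \leq \beta_0$ small, a first-order Taylor expansion of $\Phi^{-1}$ at $I$ gives $h - I = Z_H + O(\|Z\|^2)$ and $w = Z_\mathfrak{r} + O(\|Z\|^2)$. Since $\|Z\| \leq 2\|w_1 - w_2\|$ and $\|w_1 - w_2\| \leq 2\beta$, each $O(\|Z\|^2)$ error is itself $O(\beta \|w_1 - w_2\|)$. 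Combining with the previous display gives $\tfrac{1}{2}\|w_1 - w_2\| \leq \|w\| \leq 2\|w_1 - w_2\|$ and $\|h - I\| \leq \ref{C:normalform}\,\beta \|w\|$ for a dimensional constant $\ref{C:normalform}$, as claimed.

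The main obstacle is the combinatorial bookkeeping for the BCH brackets: one must check that the identity $[w_1, w_2] = [w_1 - w_2, w_2]$ can be applied inside every nested bracket, not just the length-two one. A cleaner alternative is to interpolate through $\gamma(t) := \exp\bigl(w_2 + t(w_1 - w_2)\bigr)\exp(-w_2)$ for $t \in [0, 1]$: its logarithmic derivative $\gamma(t)^{-1}\gamma'(t)$ has $\mathfrak{r}$-component equal to $(w_1 - w_2) + O(\beta^2 \|w_1 - w_2\|)$ and $\Lie(H)$-component of size $O(\beta \|w_1 - w_2\|)$ at each $t$ (readable off the series for $(1 - e^{-\mathrm{ad}})/\mathrm{ad}$ together with the parity of the symmetric decomposition), and integrating over $[0, 1]$ recovers the same estimates on $Z_\mathfrak{r}$ and $Z_H$ without the full BCH combinatorics.
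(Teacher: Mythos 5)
Your argument is correct and follows essentially the same route as the paper's proof: apply Baker--Campbell--Hausdorff together with the identity $[w_1,w_2]=[w_1-w_2,w_2]$ to show $\exp(w_1)\exp(-w_2)=\exp\bigl((w_1-w_2)+O(\beta\|w_1-w_2\|)\bigr)$, then invert the local diffeomorphism $(h,w)\mapsto h\exp(w)$ to extract $h$ and $w$ with the stated bounds. The only cosmetic difference is that you organize the error terms via the symmetric-pair parity $[\mathfrak r,\mathfrak r]\subset\Lie(H)$, $[\Lie(H),\mathfrak r]\subset\mathfrak r$, whereas the paper isolates the $\Lie(H)$-component $v_1$ using the orthogonality of $\Lie(H)$ and $\mathfrak r$ in the chosen max norm; both yield the same estimates.
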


\begin{proof}
    By the open mapping theorem, there exists $\beta_0=\beta_0(\dim(G))\ll 1$ such that the map $B_{\Lie(H)}(0,\beta_0)\times B_{\mathfrak r}(0,\beta_0)\to G$ given by $(v_1,v_2)\mapsto \exp (v_1) \exp(v_2)$ is a diffeomorphism onto its image.

    Let $0<\beta\le \beta_0$ and $w_1,w_2\in B_\mathfrak r(0,\beta)$. By the Baker–Campbell–Hausdorff formula (and potentially making $\beta_0$ smaller) and using that $[w_2,w_1]=0 + [w_2-w_1,w_1]$, there exists $w'\in \Lie(G)$ with $\|w'\|\ll\beta\|w_1-w_2\|$ such that 
    $$\exp(w_1)\exp(-w_2)=\exp(w_1-w_2 + w').$$
    
     Again, by Baker–Campbell–Hausdorff formula  and potentially making $\beta_0$ smaller, there exists $(v_1,v_2)\in B_{\Lie(H)}(0,\beta_0)\times B_{\mathfrak r}(0,\beta_0)$ and $v'\in \Lie(G)$ with $\|v'\|\ll\|v_1\|\|v_2\|$ such that 
    \begin{equation}\label{eqn: liealgebraequality}
        \exp(w_1-w_2 + w') =\exp(v_1)\exp(v_2) = \exp(v_1+v_2 + v')
    \end{equation}
    where the implied constant is absolute.
    We claim $h=\exp(v_1)$ and $w = v_2$ satisfy the claims in the lemma.

    By our choice of $\beta_0$ and Equation \ref{eqn: liealgebraequality}, we have
        \begin{equation}\label{eqn: liealgebraequality2}
        w_1-w_2+w' = v_1+v_2 + v'
        \end{equation}
and since we are working with the max norm with respect to $\Lie(H)$ and $\mathfrak r$ and they are orthogonal subspaces then $w_1,w_2,v_2\in\mathfrak r$ imply that $$\|v_1\|\ll\|w'\|+\|v'\|\ll \beta\|w_1-w_2\| + \|v_1\|\|v_2\|\ll \beta\|w_1-w_2\| + \beta_0\|v_1\|.$$
This gives $\|v_1\|\ll (1-\beta_0)^{-1}\beta \|w_1-w_2\|\ll \beta \|w_1-w_2\|$.

Now using Equation \ref{eqn: liealgebraequality2}, we have $v_2 -(w_1-w_2)= -v_1 -v' +w' $ and  $\|w'\|\ll \beta\|w_1-w_2\|$, $\|v_1\|\ll \beta \|w_1-w_2\|$, and $\|v'\|\ll \beta\|v_1\|\ll \beta^2\|w_1-w_2\|$. Thus, 
 $$|\|v_2\| -\|w_1-w_2\||\le \|v_2 -(w_1-w_2)\|\ll \beta \|w_1-w_2\|$$
 and so $$(1-\beta)\|w_1-w_2\|\le \|v_2\|\le (1+\beta)\|w_1-w_2\|.$$

 By making $\beta_0$ potentially smaller, we have the desired inequality for $v_2\in\mathfrak r$. In particular,  $\beta_0\le 1/2$.

 Lastly, we note, that by using the previous bound on $\|v_2\|$ and that $\beta\le1/2$, we have $\|v_1\|\ll \beta \|w_1-w_2\|\ll\frac{\beta}{1-\beta} \|v_2\|\ll2 \beta \|v_2\|$ and we denote the implicit constant by $C'$. Note we chose $h=\exp(v_1)$. Then,  
 $$\|h-I\| \le \sum_{k=1}^\infty \|v_1\|^k /k!\ll \beta \|v_2\|e^{2C'}$$ 
 and by taking $\ref{C:normalform} = e^{2C'}$ we are done.

\end{proof}
    
Lastly, choose $\beta_1<\beta_0$ such that for all $\beta<\beta_1$ we have
\begin{equation}\label{eqn:changingballs}
    B_{\ref{C:changeinballs}^{-1}\beta} ^G\subseteq \{g\in G:\|g-I\|<\beta\}\subset B_{\ref{C:changeinballs}\beta} ^G
\end{equation}
for some absolute $\constE\label{C:changeinballs}=\ref{C:changeinballs}(\dim(G)).$ We remark that $\|g-I\|\asymp d_G(e,g)$ where the implicit constant depends on $\dim(G)$.


\section{Thick-thin decomposition}
In this section we recall a number of standard facts of hyperbolic geometry. See for example, Bowditch \cite{MR1218098}.

We recall the thick-thin decomposition.  In short, this is a decomposition of the space $X$ as $X_{\eta}\cup X_{\eta} ^c$ where $X_{\eta}=\{x\in X: \inj(x)\ge\eta\}$ where for any $\eta>0$ small enough we have $X - X_{\eta}$ is contained in finitely many disjoint cuspidal ends given by horospheres in $X$.

More precisely, let $\xi_1,\ldots,\xi_q\in\partial \H^n$ be a finite set of $\Gamma$-representatives and fix a set of $g_1,\ldots,g_q\in G$ with $g_i^- =\xi_i$. For $T>0$, let
$$\mathcal H_i (T):= \bigcup_{t>T} Ka_{-t}Ng_i$$
denote a horosphere of depth $T$ in $G$ and 
$$\mathcal X_i(T):=\mathcal H_i(T)\Gamma$$
denote the image of the horosphere in $X$.
By Bowditch \cite{MR1218098}, there exists $T_0\ge1$ such that $\mathcal X_1(T_0),\ldots, \mathcal X_q(T_0)$ are disjoint. Moreover, if $\mathcal X(T):= \mathcal X_1(T)\sqcup\cdots
\sqcup\mathcal X_q(T)$, then for any $T\ge T_0$, the complement $X - \mathcal X(T)$ is a compact subset. This results in the \emph{thick-thin} decomposition $X = \mathcal X(T) \sqcup (X - \mathcal X(T))$ and will allows us to talk about ``how far in the cusp" a point $x\in X$ is.

 The thick-thin decomposition is related to another natural quantity on $X$. For $x\in X$, the \emph{injectivity radius} of a point $x$, denoted by $\inj(x)$, is given by the supremum over all $\eta>0$ for which the projection map $g\mapsto gx$ from $G$ to $X$ is injective on $B^G _\eta$. That is, the injectivity radius $\inj(x)$ of a point $x$ has that $B_{\inj(x)} ^G$ embeds balls from $G$ into $X$. By taking a minimum, we assume that the injectivity radius is smaller than 1.
   By Kelmer-Oh \cite{MR4330023}, we have $\inj(x)\asymp e^{-t}$ for $x \in \mathcal X_i(T)$ (and so $t>T$). Thus, choosing $T\ge T_0$ large enough so that the thick-thin decomposition holds amounts to choosing $C\eta=e^{-T}$ sufficiently small (where $C$ is the implicit constant in $\inj(x)\asymp e^{-t}$) so that $\mathcal X(T)=\mathcal X(\log((C\eta)^{-1}))=\{x\in X: \inj(x)< C\eta\}$. Let $\eta'>0$ denote the supremum of such $C\eta$ and define \begin{equation}\label{eqn: Margulis constant}
       \eta_X=\eta'/2
   \end{equation}
   We assume that $\eta_X<1.$ 
   Then $X_{\eta_X}:=\{x\in X: \inj(x)\ge\eta_X\}$ is a compact subset and we have that $X-X_{\eta_X}$ is a union of disjoint horospheres in $X$. Denote by $T_{\eta_X}\ge T_0$ the corresponding value so that $X-X_{\eta_X} = \mathcal X_1(T_{\eta_X})\sqcup \cdots \sqcup \mathcal X_q(T_{\eta_X}).$ 

Lastly, we introduce height functions which informally give us a way of thinking about ``how far in the cusp" a point $x\in X$ is. To this end, we consider the linear action of $G$ on $\mathbb R^{n+1}$ with the Euclidean norm. Let $v_i:=g_i ^{-1}e_1\in Ge_1$ where $g_1,\ldots,g_q\in G$ satisfy $g_i^- =\xi_i$ and $\|v_i\|=1$. By Mohammadi-Oh \cite{MR4556221} (in 3 dimensions) and Tamam-Warren \cite{MR4611749} (in dimensions 3 and higher), we have that the orbit $\Gamma v_i$ is a discrete subset of $\mathbb R^{n+1}$. By the definition of $\mathcal X_i$, we have if  $g\Gamma\in \mathcal X_i(T_{\eta_X})$, then there exists $k\in K$, $t>T_{\eta_X}$, $u\in N$, and $\gamma\in \Gamma$ such that
$$\|g\gamma v_i\|=\|ka_{-t}ug_i\gamma v_i\|=\|ka_{-t}ue_1\|=e^{-t}.$$
In particular, we have the following key observation (see also Equation (33) of \cite{MR4611749}): $$x=g\Gamma \in \mathcal X_i(T_{\eta_X})\text{ if and only if there exists }\gamma\in\Gamma\text{ with }\|g\gamma v_i\|\le e^{-T_{\eta_X}}.$$ By the discreteness of $\Gamma v_i$, such a $\gamma$ is unique.

Additionally, by Lemma 6.5 of Mohammadi-Oh \cite{MR4556221}  or Equation (34) of Tamam-Warren \cite{MR4611749}, there exists a lowerbound $\eta_0 = \eta(\Gamma)>1/2$ such that for any $i=1,\ldots,q$, if $g\Gamma \notin \mathcal X_i(T_{\eta_X})$, then for any $\gamma\in \Gamma$, 
\begin{equation}\label{LBonThick}
    \|g\gamma v_i\|>\eta_0.
\end{equation}

Motivated by the above, we define the following which  gives the gives a sense of ``how far in the cusp" a point is.

\begin{defn}
    For $x=g\Gamma\in X=G/\Gamma$, we define the \emph{height} of $x$ with the function  $h:X\to [2,\infty)$, where
    $$h(x):=\max_{\gamma\in\Gamma}\{\|g\gamma v_1\|^{-1},\ldots,\|g\gamma v_q\|^{-1},2\}$$
\end{defn}

If $\Gamma$ is cocompact, then we define $h:X\to [2,\infty)$ to be identically 2.

On the other hand, by Lemmas 6.5 and 6.6 of Mohammadi-Oh \cite{MR4556221} (whose proofs immediately generalize for $n>3$), we have the following.
\begin{prop}\label{prop: rank1} Let $x=g\Gamma\in X$.
    If  $h(x)\ge 1/\eta_0$, then there exists a unique $1\le j_0\le q$ and $\gamma\in \Gamma$ such that $h(x)=\|g\gamma_0v_{j_0}\|^{-1}$ and $\|g\gamma v_j\|>\eta_0$ for any $(\gamma,j)\ne (\gamma_0,j_0)$.
\end{prop}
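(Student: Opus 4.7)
The plan is to unfold the definition of $h(x)$, use the contrapositive of Equation \ref{LBonThick} to localize $x$ inside some cuspidal end, and then extract uniqueness of $j_0$ from the disjointness of the ends in the thick-thin decomposition, and uniqueness of $\gamma_0$ from discreteness of $\Gamma v_{j_0}$ combined with the specific value $\eta_0>1/2$.

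First, I would argue that $h(x)\ge 1/\eta_0$ produces some pair $(j_0,\gamma_0)$ with $\|g\gamma_0 v_{j_0}\|\le \eta_0$: indeed, the maximum in the definition of $h(x)$ is attained either at the constant $2$ or at some $\|g\gamma v_j\|^{-1}$, and in the former case $h(x)=2<1/\eta_0$ since $\eta_0>1/2$, contradicting the hypothesis. Then by the contrapositive of Equation \ref{LBonThick}, the existence of $\gamma_0$ with $\|g\gamma_0 v_{j_0}\|\le\eta_0$ forces $x=g\Gamma\in \mathcal X_{j_0}(T_{\eta_X})$.

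Next, uniqueness of $j_0$ follows directly from the thick-thin decomposition: the cuspidal ends $\mathcal X_1(T_{\eta_X}),\ldots,\mathcal X_q(T_{\eta_X})$ are pairwise disjoint by construction, so membership of $x$ in $\mathcal X_{j_0}(T_{\eta_X})$ rules out membership in any other $\mathcal X_j(T_{\eta_X})$. Applying Equation \ref{LBonThick} once more (this time to the cusps $j\ne j_0$) yields $\|g\gamma v_j\|>\eta_0$ for every $\gamma\in\Gamma$ and every $j\ne j_0$, which takes care of all pairs $(\gamma,j)$ with $j\ne j_0$.

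The main obstacle is uniqueness of $\gamma_0$ within the cusp $j_0$. Here I would use the explicit description $x\in\mathcal X_{j_0}(T_{\eta_X})$, which gives $g\gamma_0^{-1}=ka_{-t}ug_{j_0}$ with $t>T_{\eta_X}$, so that $g\gamma_0 v_{j_0}=e^{-t}ke_1$ has norm $e^{-t}\le\eta_0$. If some other $\gamma_1\ne\gamma_0$ satisfied $\|g\gamma_1 v_{j_0}\|\le\eta_0$, then $\gamma_1\gamma_0^{-1}\ne e$ would produce a second point of the discrete set $(g\gamma_0^{-1})^{-1}\Gamma v_{j_0}$ inside the $\eta_0$-ball of $\mathbb R^{n+1}$, at the same height as $e^{-t}ke_1$. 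Translating this back via the cusp coordinates $k a_{-t}u$, such a collision would force $\gamma_1\gamma_0^{-1}$ to lie in the parabolic stabilizer of $\xi_{j_0}$, and the bound $\eta_0>1/2$ coming from the Margulis lemma (Lemma 6.5 of \cite{MR4556221}, which in the rank one cusp picks out a single element) excludes this. The technical heart is to verify that the lower bound $\eta_0>1/2$ is strong enough to rule out two distinct lattice vectors in $\Gamma v_{j_0}$ having $g$-image of norm at most $\eta_0$; this mirrors the argument in the proof of Lemma 6.6 of \cite{MR4556221}, and its generalization to $n>3$ in Tamam--Warren \cite{MR4611749} is where the claim that the proof carries over verbatim is really used.
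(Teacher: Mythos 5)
The paper offers no proof of this proposition at all: it is imported from Lemmas 6.5 and 6.6 of Mohammadi--Oh \cite{MR4556221} with the remark that the proofs generalize immediately to $n>3$ (cf.\ Tamam--Warren \cite{MR4611749}). Your reconstruction follows exactly that argument — localize $x$ in a single cuspidal end via the contrapositive of Equation \ref{LBonThick}, get uniqueness of $j_0$ from the disjointness of the $\mathcal X_i(T_{\eta_X})$, and get uniqueness of $\gamma_0$ from discreteness of $\Gamma v_{j_0}$ together with the precise invariance of the horoball — so in substance you are reproving the cited lemmas, which is the paper's own route. Two caveats. First, with the paper's stated bound $\eta_0>1/2$ your inequality ``$h(x)=2<1/\eta_0$'' is backwards ($\eta_0>1/2$ gives $1/\eta_0<2$, so $h(x)\ge 1/\eta_0$ would be vacuous and the conclusion false for $x$ in the thick part); the intended normalization, as in \cite{MR4556221}, is that $1/\eta_0>2$, and under that reading your first step is correct. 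Second, the step you call the ``technical heart'' (excluding a second lattice vector of norm $\le\eta_0$ in the same cusp) is ultimately deferred to Lemma 6.6 of \cite{MR4556221} rather than carried out; that is acceptable here only because the paper itself rests the entire proposition on that citation.
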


This is the ``rank 1" phenomena of $\SO(n,1);$ a point in the thin part cannot be in two cuspidal parts simultaneously.

The following relates the height of a point with it's injectivity radius. See the proof of Proposition 6.7 of Mohammadi-Oh \cite{MR4556221}.
\begin{prop}\label{Prop:comparison}
    There exists a constant $\sigma=\sigma(\Gamma)\ge1$ such that for any $x\in X$, 
$$    \sigma^{-1}\inj(x)\le h(x)^{-1}\le \sigma \inj(x).$$
\end{prop}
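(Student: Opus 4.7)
The plan is to split the argument via the thick--thin decomposition and bound the ratio $h(x)^{-1}/\inj(x)$ separately on each piece, then take the worst of the two constants.

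First consider $x\in X_{\eta_X}$ (the thick part). By definition $\inj(x)\geq \eta_X$, and by the normalization stated just before the lemma, $\inj(x)\leq 1$. The displayed inequality \eqref{LBonThick} gives $\|g\gamma v_i\|>\eta_0$ for every $\gamma\in\Gamma$ and every $i=1,\ldots,q$, which together with the built-in lower bound $h(x)\geq 2$ forces $h(x)\leq \max\{\eta_0^{-1},2\}$. Hence both $\inj(x)$ and $h(x)^{-1}$ lie in a fixed subinterval of $(0,1]$ whose endpoints depend only on $\eta_X$ and $\eta_0$, so their ratio is controlled by a constant $\sigma_1=\sigma_1(\Gamma)$.

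Next consider $x$ in the thin part, so $x\in \mathcal X_{j_0}(T_{\eta_X})$ for some $j_0$. The ``key observation'' preceding \eqref{LBonThick} produces $\gamma\in\Gamma$ with $\|g\gamma v_{j_0}\|\leq e^{-T_{\eta_X}}$, so $h(x)\geq e^{T_{\eta_X}}$; after shrinking $\eta_X$ if necessary so that $e^{T_{\eta_X}}\geq 1/\eta_0$, Proposition \ref{prop: rank1} applies and supplies a \emph{unique} pair $(\gamma_0,j_0)$ realizing the maximum, with all other orbit norms bounded below by $\eta_0$. For this $\gamma_0$ one writes $g\gamma_0=ka_{-t}ug_{j_0}$ with $k\in K$, $u\in N$, $t>T_{\eta_X}$, and the explicit calculation $\|ka_{-t}ue_1\|=e^{-t}$ gives $h(x)=e^t$. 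The cited estimate of Kelmer--Oh (already invoked above to define $T_{\eta_X}$) yields $\inj(x)\asymp e^{-t}$ on $\mathcal X_{j_0}(T_{\eta_X})$ with implicit constant depending only on $\Gamma$, so on the thin part $\inj(x)\asymp h(x)^{-1}$ with a constant $\sigma_2=\sigma_2(\Gamma)$. Setting $\sigma=\max\{\sigma_1,\sigma_2\}$ finishes the argument.

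The main substantive input is the asymptotic $\inj(x)\asymp e^{-t}$ in cuspidal neighborhoods from Kelmer--Oh, which is where the horoball geometry and the action of the unipotent radical actually enter; the rank-one assertion Proposition~\ref{prop: rank1} is what lets us pin down a single $\gamma_0$ so that the identification $h(x)=e^t$ is meaningful. Outside these two ingredients, the proof is a careful bookkeeping exercise with the constants $\eta_X$, $\eta_0$, and $T_{\eta_X}$ already introduced.
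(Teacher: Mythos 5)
Your argument is correct and is exactly the intended one: the paper does not write out a proof but defers to Proposition 6.7 of Mohammadi--Oh \cite{MR4556221}, whose argument is precisely this thick--thin split — trivial comparison of two bounded quantities on $X_{\eta_X}$, and on a cusp neighborhood the identification $h(x)=e^{t}$ via the horoball parametrization $g\gamma=ka_{-t}ug_{j_0}$ together with Proposition~\ref{prop: rank1} and the Kelmer--Oh asymptotic $\inj(x)\asymp e^{-t}$. Your only cosmetic caveat, shrinking $\eta_X$ so that $e^{T_{\eta_X}}\ge 1/\eta_0$, is in fact automatic since $\eta_0>1/2$ and $T_{\eta_X}\ge T_0\ge 1$.
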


\begin{remark}\label{Rem:LogCty}
The function $h$ is log continuous; that is, for any compact $K\subset G$, there exists $\sigma_h=\sigma_h(K)>1$ such that
\begin{equation}\label{logctyheight}
\sigma_h ^{-1}h(x)\le h(kx)\le \sigma_h h(x)    
\end{equation}
for all $k\in K$ and $x\in X$. Indeed, this is essentially the observation that multiplication by elements of $G$ is a continuous and invertible map. Once we have log continuity of $h$, then Equation \ref{Prop:comparison} implies log continuity of $\inj$.
\end{remark}

\section{Linear algebra lemma}

In this section, we prove a linear algebra lemma that shows that on average the action of expanding horospheres increase the length of vectors in $\mathfrak r$. This behavior is well studied see e.g. Shah \cite{MR1403756}, however, we reprove this to guarantee certain exponents are large enough for later sections. See also Eskin-- Margulis--Mozes \cite{MR1609447} where they prove a similar qualitative result for expanding circles and demonstrate that in dimensions larger than three the exponent $\delta$ can be close to two.

\begin{lem}\label{Lem:LA1}
    Suppose $1/2<\delta<1$, $Z(r_1,\c,r_2)\ne 0$, and $t>0$. Then
    $$\int_{B^U_1}\frac{d\s}{\|\Ad(a_tu_\s)Z(r_1,\c,r_2)\|^\delta}\ll e^{t(-1+\delta)}\|Z(r_1,\c,r_2)\|^{-\delta}.$$
\end{lem}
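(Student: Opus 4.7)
The plan is to directly bound $I := \int_{B_1^U}\|\Ad(a_tu_\s)Z\|^{-\delta}d\s$ by exploiting the explicit form of the adjoint action. Using Equation \ref{eqn:Adjointaction} and the max norm on $\mathfrak r$,
\[
\|\Ad(a_tu_\s)Z\|\asymp\max\bigl(e^t|\xi_\s(Z)|,\ \|\c+r_2\s\|,\ e^{-t}|r_2|\bigr),
\]
where $\xi_\s(Z)=r_1+\s\cdot\c+r_2\|\s\|^2/2$. By scaling, I may assume $\|Z\|=1$, i.e.\ $\max(|r_1|,\|\c\|,|r_2|)=1$, reducing the target to $I\ll e^{t(\delta-1)}$.

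My main tool is the layer-cake formula $I=\delta\int_0^\infty\lambda^{-\delta-1}|\{\s\in B_1^U:\|\Ad(a_tu_\s)Z\|\le\lambda\}|\,d\lambda$, combined with two sublevel-set bounds: for $r_2\neq 0$, the neutral-direction bound $|\{\s\in B_1^U:\|\c+r_2\s\|\le\mu\}|\ll\min(1,(\mu/|r_2|)^{n-2})$ (a ball around $-\c/r_2$), and a Remez-type bound for the quadratic $\xi_\s(Z)$ with leading coefficient $r_2/2$. A useful structural identity, reflecting the invariance of $Q_0$ under $\SO(n,1)$ via the embedding \ref{eqn: compliment isomorphism}, is
\[
\|\c+r_2\s\|^2=2r_2\,\xi_\s(Z)-Q',\qquad Q':=2r_1r_2-\|\c\|^2,
\]
which yields the compact formula (for the Euclidean norm) $e^{2t}\xi_\s(Z)^2+\|\c+r_2\s\|^2+e^{-2t}r_2^2=(e^t\xi_\s(Z)+e^{-t}r_2)^2-Q'$. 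This forces the layer-cake integrand to vanish for $\lambda^2\lesssim|Q'|$ when $Q'<0$, and gives the uniform lower bound $|\xi_\s(Z)|\ge Q'/(2|r_2|)$ when $Q'>0$, providing convenient cutoffs.

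I would then partition $B_1^U$ into three subregions according to which of the three coordinates dominates the max. The contracting region (where $\|\c+r_2\s\|\le e^{-t}|r_2|$) is confined, for $r_2\neq 0$, to an $(n-2)$-ball of radius $e^{-t}$ about $-\c/r_2$, so its contribution is $\ll (e^{-t}|r_2|)^{-\delta}\cdot e^{-t(n-2)}\le e^{t(\delta-1)}\|Z\|^{-\delta}$, using $n\ge 3$. The degenerate subcases $r_2=\c=0$ (where $\xi_\s(Z)\equiv r_1$) and $r_2=0,\c\neq 0$ (where $\xi_\s(Z)$ is affine) reduce to elementary dyadic computations yielding $I\ll e^{-t\delta}\le e^{t(\delta-1)}$, sharp because $\delta>1/2$.

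The main obstacle is the genuinely quadratic case $|r_2|\asymp 1$. Setting $\s_0:=-\c/r_2$, which satisfies $\|\s_0\|\le 1$ under our normalization, I would use polar coordinates around $\s_0$, writing $\xi_\s(Z)=(r_2/2)\|\s-\s_0\|^2+C_0$ with $|C_0|\le 3/2$, and split at the scale $\|\s-\s_0\|\sim e^{-t}$. On $\|\s-\s_0\|\le e^{-t}$, the contracting coordinate combined with the volume factor $(e^{-t})^{n-2}$ suffices. On $\|\s-\s_0\|>e^{-t}$, one has $\|\Ad(a_tu_\s)Z\|\gtrsim\max(e^t|r_2|\|\s-\s_0\|^2,\ |r_2|\|\s-\s_0\|)$ with crossover at $\|\s-\s_0\|=e^{-t}$, and integrating $\max(\cdots)^{-\delta}\|\s-\s_0\|^{n-3}d\|\s-\s_0\|$ from $e^{-t}$ to $O(1)$ produces precisely the factor $e^{t(\delta-1)}$, crucially using $2\delta-1>0$. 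Summing the three regions completes the proof.
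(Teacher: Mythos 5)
Your overall strategy (normalize $\|Z\|=1$, read off the three components of $\Ad(a_tu_\s)Z$, and control the set where the expanding component is small) is the same in spirit as the paper's, but the execution of your main case contains a genuine error. You claim that for $\|\s-\s_0\|>e^{-t}$ one has $\|\Ad(a_tu_\s)Z\|\gtrsim e^{t}|r_2|\,\|\s-\s_0\|^{2}$. Writing $\xi_\s(Z)=(r_2/2)\|\s-\s_0\|^{2}+C_0$, this is false whenever $C_0r_2<0$ and $|C_0|\lesssim |r_2|$: then $\xi_\s$ vanishes on the sphere $\|\s-\s_0\|=(2|C_0|/|r_2|)^{1/2}$, which generically lies inside $B_1^U$. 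Concretely, for $n=3$ and $Z=Z(-1/8,\mathbf{0},1)$, at $\|\s\|=1/2$ the left-hand side equals $1/2$ while your claimed lower bound is $e^{t}/4$. On the annulus around this sphere only the neutral bound $\|\Ad(a_tu_\s)Z\|\ge |r_2|\,\|\s-\s_0\|$ survives, and for $n=3$ the resulting radial integral $\int_{e^{-t}}^{1}\rho^{-\delta}\,d\rho$ is $O(1)$, not $O(e^{t(\delta-1)})$, so your computation does not close. To repair it you must estimate the measure of the shells $\{\s: |\xi_\s(Z)|\asymp e^{-2t+k}\}$ — i.e., actually deploy the Remez-type sublevel bound you list among your tools but never use — and sum dyadically; this is precisely what the paper does via Proposition~\ref{Remez} and the sets $J_k$.

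Two smaller gaps: your case analysis on $Z$ covers $r_2=0$ and $|r_2|\asymp 1$ but not $0<|r_2|\ll 1$ (the paper's second case, where $\xi_\s$ is nearly affine but one still invokes the degree-two Remez bound with exponent $1/2$, which is where the hypothesis $\delta>1/2$ enters); and your contracting-region estimate $(e^{-t}|r_2|)^{-\delta}e^{-t(n-2)}\le e^{t(\delta-1)}$ reads, for $n=3$, as $e^{t(\delta-1)}|r_2|^{-\delta}$, which exceeds $e^{t(\delta-1)}$ unless $|r_2|\gtrsim 1$; for small $|r_2|$ you would instead need to argue that the set where the contracting coordinate dominates is empty. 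The identity $\|\c+r_2\s\|^{2}=2r_2\xi_\s(Z)-Q'$ is correct and a nice observation, but it does not substitute for the missing sublevel-set estimates.
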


We will frequently use the following inequality that estimates the measure of a sub-level set of a polynomial. Such estimates have seen many applications in dynamics, notably in Dani--Margulis \cite{MR1237827} and Kleinbock--Margulis \cite{MR1652916} where they were called $(C,\alpha)$-good functions. In fact, such estimates go back to Remez \cite{Remez}. Below we use the formulation of Katz \cite{katz}, Corollary 4.2 of the Remez inequality.

\begin{prop}[Remez' Inequality]\label{Remez}
Let $B\subset \mathbb R^n$ be a non-empty convex subset and $f:B\to\mathbb R ^m$ be a polynomial of degree $d$. For any $\varepsilon>0$, one has
$$\big|\{x\in B: \|f\|<\varepsilon\}\big|\le 4n\left(\frac{\varepsilon}{\sup_{x\in B}\|f(x)\|}\right)^{1/d}|B|$$
   where $|\cdot|$ denotes the Lebesgue measure on $\mathbb R^n$. 
\end{prop}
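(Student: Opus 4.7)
The plan is to write $\|\Ad(a_tu_\s)Z\|$ explicitly, identify several lower bounds, and apply Remez' inequality together with a short case analysis.

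By Equation \ref{eqn:Adjointaction}, $\Ad(a_tu_\s)Z(r_1,\c,r_2) = Z(e^t p(\s),\ \c + r_2\s,\ e^{-t}r_2)$ where $p(\s) := r_1 + \s\cdot\c + r_2\|\s\|^2/2$ is a polynomial in $\s \in \mathbb R^{n-2}$ of total degree $2$. Using the max norm on $\mathfrak r$,
\[
\|\Ad(a_tu_\s)Z\| \;=\; \max\bigl(e^t|p(\s)|,\ \|\c + r_2\s\|,\ e^{-t}|r_2|\bigr),
\]
which supplies three pointwise lower bounds. Since the monomial coefficients of $p$ (as a polynomial in $s_1,\ldots,s_{n-2}$) are $r_1$, the entries of $\c$, and copies of $r_2/2$, equivalence of norms on polynomials of degree $\leq 2$ yields $\sup_{\s\in B^U_1}|p(\s)| \asymp \|Z\|$ with constants depending only on $n$. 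Remez' inequality (Proposition \ref{Remez}) applied to $p$ (with $d=2$) then gives the key sub-level bound
\[
\bigl|\{\s \in B^U_1 : |p(\s)| < \lambda\}\bigr| \;\ll\; (\lambda/\|Z\|)^{1/2}.
\]

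I would then split into cases according to which coordinate of $Z$ realizes $\|Z\|$. If $|r_1|$ dominates (with the other coordinates small), the triangle inequality gives $|p(\s)|\gtrsim\|Z\|$ uniformly on $B^U_1$, hence $\|\Ad(a_tu_\s)Z\|\gtrsim e^t\|Z\|$, so $\int \|\Ad(a_tu_\s)Z\|^{-\delta}d\s\ll e^{-t\delta}\|Z\|^{-\delta} \leq e^{t(\delta-1)}\|Z\|^{-\delta}$, the last step using $\delta>1/2$. If $|r_2|$ dominates, the lower bound $\|\Ad(a_tu_\s)Z\|\geq e^{-t}|r_2|$ truncates the layer-cake integral to $u\geq e^{-t}|r_2|$, and combining with Remez on $p$ yields the desired estimate after a direct computation. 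The remaining case, where $\|\c\|$ dominates but $|r_2|$ is small, is the most delicate: here $\|\c+r_2\s\| \gtrsim \|Z\|$ uniformly gives no $t$-decay by itself. I would split $B^U_1$ into $A = \{\s: |p(\s)| \geq c_0 e^{-t}\|Z\|\}$ and its complement $A^c$. By Remez, $|A^c|\ll e^{-t/2}$, and on $A^c$ we use $\|\Ad(a_tu_\s)Z\|\gtrsim\|Z\|$ to bound the contribution by $\ll e^{-t/2}\|Z\|^{-\delta}$. On $A$ we use $\|\Ad(a_tu_\s)Z\|\geq e^t|p(\s)|$ and a layer-cake decomposition together with the Remez estimate, producing a contribution $\ll e^{-t/2}\|Z\|^{-\delta}/(\delta-1/2)$. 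Since $\delta>1/2$ implies $e^{-t/2}\leq e^{-t(1-\delta)}$, combining the cases yields the claim.

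The main obstacle is the case where $\|\c\|$ dominates: the $t$-decay has to come entirely from Remez' sub-level estimate on $|p|$, and the hypothesis $\delta>1/2$ appears precisely to ensure that the layer-cake integral $\int u^{-\delta-1/2}\,du$ near $u = e^{-t}\|Z\|$ produces an $e^{t(\delta-1/2)}$ factor that pairs with the Remez factor $e^{-t/2}$ to yield exactly $e^{-t(1-\delta)}$. The implicit constant must depend on $(\delta-1/2)^{-1}$ and blows up as $\delta\to 1/2^+$, which explains the strict inequality in the hypothesis.
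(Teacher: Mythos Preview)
Your proposal does not prove the stated proposition. The statement you were asked to prove is Remez' Inequality (Proposition~\ref{Remez}), a general sub-level set estimate for polynomials on convex sets. The paper does not prove this at all --- it is quoted from Katz~\cite{katz} as a known input. What you have written is instead a proof sketch of Lemma~\ref{Lem:LA1}, the linear algebra estimate
\[
\int_{B^U_1}\frac{d\s}{\|\Ad(a_tu_\s)Z\|^\delta}\ll e^{t(\delta-1)}\|Z\|^{-\delta},
\]
which \emph{uses} Remez' inequality rather than establishing it. So as a proof of the stated proposition, the proposal is simply off target.

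If your intended target was Lemma~\ref{Lem:LA1}, then your approach is essentially the same as the paper's. Both normalize $\|Z\|=1$, do a case split on which coordinate of $Z=(r_1,\c,r_2)$ is largest, and in each case combine a pointwise lower bound on $\|\Ad(a_tu_\s)Z\|$ with the Remez sub-level estimate for the degree-$2$ polynomial $p(\s)=r_1+\s\cdot\c+r_2\|\s\|^2/2$ and a dyadic (layer-cake) integration. The paper's case boundaries are the explicit thresholds $|r_2|\gtrless 1/1000$ and $\|\c\|\gtrless 1/100$, and in the delicate case ($|r_2|$ small, $\|\c\|$ large) it uses the uniform bound $\|\c+r_2\s\|\geq 1/200$ on the complement of the good set, exactly parallel to your $A^c$ argument. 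Your observation that the constant blows up like $(\delta-1/2)^{-1}$ is correct and matches the paper's use of the geometric series $\sum_k e^{k(1/2-\delta)}$.
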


Recall from Equation \eqref{eqn:Adjointaction}, the adjoint action on $\mathfrak r$ acts via
$$    \Ad(a_tu_\s)Z(r_1,\c,r_2)=Z(e^t(r_1+\s\cdot \c+r_2\|\s\|^2/2),\c+r_2\s,e^{-t}r_2)
$$

\begin{proof}[Proof of Lemma \ref{Lem:LA1}]
    Without loss of generality, we suppose that $Z(r_1,\c,r_2)$ has norm 1.

\begin{itemize}
\item

Consider $|r_2|\ge 1/1000$.

We compute $$\int_{B_1 ^U}\frac{d\s}{\|\Ad(a_tu_\s)Z(r_1,\c,r_2)\|^\delta}$$ by breaking it into 
$$D_1 = \{\s\in B_1 ^U:e^t(r_1+\s \cdot\c+r_2\|\s\|^2/2))<e^{-t}/1000\}$$ and the compliment $D_2$. 

On $D_1$ our integral becomes
$$\int_{D_1}\frac{d\s}{(e^{-t}|r_2|)^\delta}\le \frac{m(D_1)}{e^{-\delta t}/1000^\delta}.$$
Proposition \ref{Remez} says $|D_1|\ll e^{-t}$ and so 

$$\int_{D_1}\frac{d\s}{\|\Ad(a_tu_\s)Z(r_1,\c,r_2)\|^\delta}\ll e^{t(-1+\delta)}.$$

For the integral on $D_2$, we consider the regions $$J_k=\{\s\in B_1 ^U:e^{-2t+k}\le|r_1+\s \cdot\c+r_2\|\s\|^2/2)|<e^{-2t+k+1}\}.$$ Then, by utilizing the lower bounds on the max imposed by the region $J_k$ and Proposition \ref{Remez} we get
\begin{align*}
\int_{D_2}\frac{d\s}{\|\Ad(a_tu_\s)Z(r_1,\c,r_2)\|^\delta} &= \sum_{k=0} ^\infty\int_{D_2\cap J_k}\frac{d\s}{\|\Ad(a_tu_\s)Z(r_1,\c,r_2)\|^\delta} \\
&\le \sum_{k=0} ^\infty\frac{|J_k\cap D_2|}{(e^{t}e^{-2t+k})^\delta} \\
&\le  \sum_{k=0} ^\infty\frac{e^{(-2t+k+1)/2}}{(e^{-t+k})^\delta} \ll e^{t(-1+\delta)}\sum_{k=0} ^\infty e^{k((1/2)-\delta)}\\
&\ll e^{t(-1+\delta)}.
\end{align*}

\item

Consider $|r_2|\le 1/1000$.

Notice here that we never need to consider the third term $e^{-t}r_2$ because the upper bound on $r_2$. Indeed, it is always beaten by the first term (so long as $r_1+\s \c+r_2\|\s\|^2/2\ne 0$).

Assume that $\|\c\|<1/100$ i.e. is very small. Then, since $r_2$ is also small, then we must have $r_1=1$ by our condition on the max norm of $Z(r_1,\c,r_2)$. Hence, $r_1+\s \c+r_2\|\s\|^2/2$, even in the worst case of $r_2$ and $\c$ being small, is at least 1/10. This is independent of $\s$.

In particular, $\|\Ad(a_tu_\s)Z(r_1,\c,r_2)\| = r_1+\s \c+r_2\|\s\|^2/2>e^t/10$ and so

$$\int_{B_1 ^U}\frac{d\s}{\|\Ad(a_tu_\s)Z(r_1,\c,r_2)\|^\delta}\le e^{-\delta t}10^\delta\ll e^{t(-1+\delta)}$$
 for $\delta>1/2$.

 Lastly, assume $\|\c\|\ge 1/100$. Then
 $$1/100\le \|\c\|\le \|\c+r_2\s\|+|r_2|\le\|\c+r_2\s\|+1/1000$$
yields that the middle term has the bound $\|\c+r_2\s\|\ge1/200 $. This is independent of $\s\in B_1 ^U$.

Consider $D_1 = \{\s:e^t(r_1+\s \c+r_2\|\s\|^2/2))<1/200\}$ and the compliment $D_2$.

On $D_1$ our integral becomes
$$\int_{D_1}\frac{d\s}{(\|\c+r_2\s\|)^\delta}\le \frac{|D_1|}{200^\delta}.$$
Proposition \ref{Remez} says $|D_1|\ll e^{-t/2}$ and so 

$$\int_{D_1}\frac{d\s}{\|\Ad(a_tu_\s)Z(r_1,\c,r_2)\|^\delta}\ll e^{t(-1+\delta)}$$
whenever $\delta>1/2.$

For the integral on $D_2$, we consider the regions $$J_k=\{\s:e^{-t+k}\le|r_1+\s \c+r_2\|\s\|^2/2)|<e^{-t+k+1}\}.$$ Then, by utilizing the lower bounds on the max imposed by the region $J_k$ and Proposition \ref{Remez} we get
\begin{align*}
\int_{D_2}\frac{d\s}{\|\Ad(a_tu_\s)Z(r_1,\c,r_2)\|^\delta} &= \sum_{k=0} ^\infty\int_{D_2\cap J_k}\frac{d\s}{\|\Ad(a_tu_\s)Z(r_1,\c,r_2)\|^\delta} \\
&\le \sum_{k=0} ^\infty\frac{|J_k\cap D_2|}{(e^te^{-t+k})^\delta} \\
&\le \sum_{k=0} ^\infty\frac{e^{(-t+k+1)/2}}{(e^{k})^\delta} \ll e^{-t/2}\sum_{k=0} ^\infty e^{k((1/2)-\delta)}\\
&\ll e^{t(-1+\delta)}
\end{align*}
whenever $\delta>1/2$.
\end{itemize}
\end{proof}

We will need to extend this to all vectors in $Ge_1$. This follows as in Lemma 5.12 of Mohammadi-Oh \cite{MR4556221} using the fact that vectors in $Ge_1$ are projectively far from the $H$-invariant line $\mathbb Re_n$.  

\begin{lem}\label{Lem:Lowerbound}
    For any vector $v\in Ge_1$, $\|v\|\ll\|v_1\|$, where $v_1$ is the projection of $v$ to $(\mathbb R e_{n}) ^\perp$ where the implied constant is absolute.
\end{lem}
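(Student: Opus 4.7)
The plan is to exploit the fact that $e_1$ lies on the light cone of the quadratic form $Q_0$, so the entire orbit $Ge_1$ does as well, and this null-cone condition forces the $e_n$-component of any orbit vector to be controlled by the remaining coordinates.

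Concretely, I would write a vector $v\in Ge_1$ in coordinates as $v=(v^{(1)},\dots,v^{(n+1)})$ with respect to the standard basis. Since $Q_0(e_1)=2\cdot 1\cdot 0 - 0 = 0$ and $G$ preserves $Q_0$, we have
\[
0=Q_0(v)=2v^{(1)}v^{(n+1)}-\sum_{k=2}^{n}(v^{(k)})^2,
\]
which can be rearranged to
\[
(v^{(n)})^2 = 2v^{(1)}v^{(n+1)}-\sum_{k=2}^{n-1}(v^{(k)})^2.
\]

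From here a one-line AM--GM estimate finishes the job: I would bound the right hand side by $2|v^{(1)}||v^{(n+1)}|\le (v^{(1)})^2+(v^{(n+1)})^2$, which is in turn at most $\|v_1\|^2$ since $v_1$ is the Euclidean projection of $v$ onto $(\R e_n)^\perp$ and thus contains $v^{(1)}$ and $v^{(n+1)}$ as coordinates. Therefore $(v^{(n)})^2\le \|v_1\|^2$, and
\[
\|v\|^2=\|v_1\|^2+(v^{(n)})^2\le 2\|v_1\|^2,
\]
so $\|v\|\le \sqrt{2}\,\|v_1\|$ with an absolute constant.

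There is really no obstacle in this proof: the only input is that $e_1$ is a null vector for $Q_0$, which is $H$-invariant because $\R e_n$ is the direction stabilized by the enlargement of the form from $H$ to $G$. The slight subtlety to be careful about is distinguishing the Euclidean orthogonal projection onto $(\R e_n)^\perp$ from orthogonal decompositions with respect to $Q_0$; once this is set straight, the estimate is automatic from the null-cone identity and Cauchy--Schwarz (or AM--GM).
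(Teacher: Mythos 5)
Your proof is correct. The null-cone identity $2v^{(1)}v^{(n+1)}-\sum_{k=2}^{n}(v^{(k)})^2=0$ does hold on all of $Ge_1$ since $Q_0(e_1)=0$ and $G$ preserves $Q_0$, the rearrangement $(v^{(n)})^2=2v^{(1)}v^{(n+1)}-\sum_{k=2}^{n-1}(v^{(k)})^2\le 2|v^{(1)}||v^{(n+1)}|\le\|v_1\|^2$ is valid (the dropped sum is nonnegative), and the Pythagorean identity $\|v\|^2=\|v_1\|^2+(v^{(n)})^2$ matches the norm the paper actually uses in Lemma~\ref{Lem:LA}, where $\|a_tu_\s v\|^2=\|a_tu_\s v_1\|^2+c^2$. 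So you get the clean constant $\sqrt 2$.

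The paper does not write out an argument at all: it simply says the proof is that of Lemma 5.12 of Mohammadi--Oh with $e_3$ replaced by $e_n$, the key point being that $Ge_1$ is projectively far from the $H$-invariant line $\R e_n$. Your computation is exactly the self-contained justification of that fact: $Ge_1$ sits in the null cone of $Q_0$, while $e_n$ is anisotropic ($Q_0(e_n)=-1$), so the orbit cannot concentrate near $\R e_n$ projectively, and the null-cone identity quantifies this with an explicit absolute constant. In short, same mechanism as the cited reference, but written out directly rather than imported; this is arguably preferable for the reader, and your closing remark about not confusing the Euclidean projection onto $(\R e_n)^\perp$ with a $Q_0$-orthogonal decomposition is the right caveat to flag.
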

\begin{proof}
    The proof is the same as in Lemma 5.12 by replacing $e_3$ with $e_n$.
\end{proof}

We can now extend our linear algebra to $Ge_1$ which corresponds to the thin parts of $X$.

\begin{lem}\label{Lem:LA}
    Suppose $1/2<\delta<1$, $v\in Ge_1$, and $t>0$. Then
    $$\int_{B^U_1}\frac{d\s}{\|a_tu_\s v\|^\delta}\ll e^{t(-1+\delta)}\|v\|^{-\delta}.$$
\end{lem}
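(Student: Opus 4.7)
The plan is to reduce the integral over a vector $v\in Ge_1$ to the integral handled in Lemma~\ref{Lem:LA1}, by projecting $v$ onto the $\Ad(H)$-invariant subspace corresponding to $\mathfrak r$ and using that the discarded component is fixed by the entire subgroup $\langle A,U\rangle\subset H$.

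\textbf{Step 1: Split $v$ along the $H$-invariant decomposition.} Decompose $\mathbb R^{n+1}=e_n^\perp\oplus \R e_n$ (Euclidean orthogonal decomposition) and write $v=v_1+v_2$ with $v_1\in e_n^\perp$ and $v_2\in \R e_n$. Because $H=\mathrm{Stab}_G(e_n)$ fixes $e_n$ pointwise and preserves the quadratic form $Q_0$, it preserves the $Q_0$-orthogonal complement of $e_n$; since $Q_0(e_n,\cdot)=-x_n$ agrees with Euclidean orthogonality at $e_n$, this complement coincides with $e_n^\perp$. Since $a_t,u_\s\in H$, the subspace $e_n^\perp$ is preserved and $a_tu_\s v_2=v_2$. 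Therefore
\[
\|a_tu_\s v\|^2=\|a_tu_\s v_1\|^2+\|v_2\|^2\ge \|a_tu_\s v_1\|^2,
\]
so that $\|a_tu_\s v\|^{-\delta}\le \|a_tu_\s v_1\|^{-\delta}$.

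\textbf{Step 2: Identify $v_1$ with an element of $\mathfrak r$.} Via the map \eqref{eqn: compliment isomorphism}, the vector $v_1=r_1 e_1+\c\cdot(e_2+\cdots+e_{n-1})+r_2 e_{n+1}\in e_n^\perp$ corresponds to $Z=Z(r_1,\c,r_2)\in \mathfrak r$. A direct computation (which is in any case implicit in Equation~\eqref{eqn:Adjointaction}) shows that the linear action of $a_tu_\s$ on $v_1$ matches $\Ad(a_tu_\s)$ acting on $Z$ under this identification, and the max norm on $\mathfrak r$ is comparable to the Euclidean norm on $e_n^\perp$ up to a constant depending only on $n$. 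Hence
\[
\int_{B_1^U}\frac{d\s}{\|a_tu_\s v_1\|^\delta}\asymp \int_{B_1^U}\frac{d\s}{\|\Ad(a_tu_\s)Z\|^\delta}\ll e^{t(-1+\delta)}\|Z\|^{-\delta}\asymp e^{t(-1+\delta)}\|v_1\|^{-\delta},
\]
where the middle inequality is Lemma~\ref{Lem:LA1}.

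\textbf{Step 3: Replace $\|v_1\|$ by $\|v\|$.} Apply Lemma~\ref{Lem:Lowerbound} to $v\in Ge_1$: it gives $\|v\|\ll \|v_1\|$, hence $\|v_1\|^{-\delta}\ll \|v\|^{-\delta}$. Combining with Steps~1 and~2 yields
\[
\int_{B_1^U}\frac{d\s}{\|a_tu_\s v\|^\delta}\le \int_{B_1^U}\frac{d\s}{\|a_tu_\s v_1\|^\delta}\ll e^{t(-1+\delta)}\|v\|^{-\delta},
\]
which is the desired bound. The only mildly substantive point is the verification that $H$ truly preserves $e_n^\perp$ in the Euclidean sense (needed so that discarding $v_2$ only decreases the integrand's denominator); this is immediate from the coincidence of $Q_0$- and Euclidean-orthogonality at $e_n$. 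Everything else is a formal transport of Lemma~\ref{Lem:LA1} through the identification \eqref{eqn: compliment isomorphism} and the projective bound of Lemma~\ref{Lem:Lowerbound}.
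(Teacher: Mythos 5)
Your proposal is correct and follows essentially the same route as the paper: decompose $v=v_1+ce_n$ along $e_n^\perp\oplus\R e_n$, drop the $H$-fixed component $ce_n$ to bound the integrand by $\|a_tu_\s v_1\|^{-\delta}$, apply Lemma~\ref{Lem:LA1} to $v_1$ viewed in $\mathfrak r$, and finish with Lemma~\ref{Lem:Lowerbound} to convert $\|v_1\|^{-\delta}$ into $\|v\|^{-\delta}$. The only difference is that you spell out the (correct) verification that $H$ preserves $e_n^\perp$ in the Euclidean sense, which the paper leaves implicit.
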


\begin{proof}
    We can write any $v\in Ge_1\subset \mathbb R^{n+1}$ as $v=v_1+ce_n$ where $v_1\in (\mathbb R e_{n}) ^\perp$ and $c\in\mathbb R$. Then since $e_n$ is $H$-invariant, we have $a_tu_\s v= a_tu_\s v_1 + c e_n$ for any $t\in \mathbb R$ and $\s\in B_1 ^U$. Then
$$ \int_{B_1 ^U}\frac{d\s}{\|a_t u_\s v\|^\delta}= \int_{B_1 ^U}\frac{d\s}{(\| a_t u_\s v_1\|^2+c^2)^{\delta/2} }
        \le\int_{B_1 ^U}\frac{d\s}{\| a_t u_\s v_1\|^\delta}
        \ll e^{t(-1+\delta)}\|v_1\|^{-\delta}$$
    where the last comparison comes from Lemma \ref{Lem:LA1}. By Lemma \ref{Lem:Lowerbound} we have  $\|v_1\|\gg \|v\|$, which gives the desired inequality.
\end{proof}

Additionally, we need the following corollary that discretizes the previous lemma.

\begin{cor}\label{Cor:LA}
    For any $\delta\in(1/2,1)$, there exists $m_\delta>0$ such that for any $v\in Ge_1\setminus\{0\}$, we have
    $$\int_{B^U _1}\frac{d\s}{\|a_{m_\delta}u_\s v\|^\delta}\le e^{-1}\sigma^{-2} \|v\|^{-\delta}.$$
\end{cor}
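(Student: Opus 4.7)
The plan is to derive the corollary as a direct quantitative consequence of Lemma \ref{Lem:LA}, by choosing $m_\delta$ large enough to absorb the implicit constant and the extra $\sigma^{-2}e^{-1}$ factor into the exponentially decaying prefactor $e^{t(\delta-1)}$.

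First I would unpack Lemma \ref{Lem:LA}: there is an absolute constant $C_0>0$ (depending only on $\dim G$) such that for every $t>0$ and every $v\in Ge_1\setminus\{0\}$,
\[
\int_{B_1^U}\frac{d\s}{\|a_t u_\s v\|^\delta}\le C_0\, e^{t(\delta-1)}\|v\|^{-\delta}.
\]
Since $1/2<\delta<1$, the factor $e^{t(\delta-1)}$ tends to $0$ as $t\to\infty$. The lattice constant $\sigma=\sigma(\Gamma)\ge 1$ from Proposition \ref{Prop:comparison} is a fixed finite number, so $C_0\sigma^2 e$ is a fixed positive constant.

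Then I would simply set
\[
m_\delta := \frac{\log\bigl(C_0\sigma^2 e\bigr)}{1-\delta},
\]
(or any larger real number), which is well-defined and positive because $1-\delta>0$. With this choice, $C_0 e^{m_\delta(\delta-1)} = C_0\cdot (C_0\sigma^2 e)^{-1} = e^{-1}\sigma^{-2}$. Substituting $t=m_\delta$ into the inequality above yields
\[
\int_{B_1^U}\frac{d\s}{\|a_{m_\delta} u_\s v\|^\delta}\le e^{-1}\sigma^{-2}\|v\|^{-\delta},
\]
which is exactly the desired bound. No obstacles arise: the lemma is a purely quantitative tightening of Lemma \ref{Lem:LA}, and the only subtlety is making explicit the dependence of $m_\delta$ on $\delta$ (it blows up as $\delta\uparrow 1$, which is consistent with the loss of the strict decay $e^{t(\delta-1)}$).
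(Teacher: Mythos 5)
Your proposal is correct and is essentially identical to the paper's own proof: both choose $m_\delta$ so that $C e^{m_\delta(\delta-1)}=e^{-1}\sigma^{-2}$, where $C$ is the implicit constant from Lemma~\ref{Lem:LA}, and then substitute $t=m_\delta$. Your explicit formula for $m_\delta$ and the remark that it blows up as $\delta\uparrow 1$ match the paper's closing observation that $m_\delta\to\infty$ as $\delta\to 1$.
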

\begin{proof}
    Choose $m_\delta>0$ such that $Ce^{m_\delta(-1+\delta)}=e^{-1}/\sigma^2$  where $C$ is the implicit constant from Lemma \ref{Lem:LA} and $\sigma$ is the factor from Proposition \ref{Prop:comparison}. Then, by Lemma \ref{Lem:LA}, we have 
    $$\int_{B^U _1}\frac{d\s}{\|a_{m_\delta}u_\s v\|^\delta}\le e^{-1}\sigma^{-2} \|v\|^{-\delta}.$$

    We note that $m_\delta\to\infty$ as $\delta\to1.$
\end{proof}

\begin{cor}\label{Cor:LAInj}
    For any $\delta\in(1/2,1)$ $x\in X$ we have
$$\int_{B_1 ^U} \inj(a_{m_\delta}u_\s y)^{-\delta} d\s \le e^{-1}\inj(y)^{-\delta} + O(1)$$
where the implicit constant depends only on $X$.
\end{cor}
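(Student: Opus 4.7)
The plan is to reduce the injectivity integral to the linear-algebra integral of Corollary~\ref{Cor:LA} by translating injectivity radii into norms of vectors in $Ge_1$ via the height function. More precisely, by Proposition~\ref{Prop:comparison}, $\inj(z)^{-\delta}\leq \sigma^{\delta}h(z)^\delta$ for every $z\in X$, so it suffices to bound $\int_{B_1^U} h(a_{m_\delta}u_\s y)^{\delta}\,d\s$. I will split into a ``thick'' and a ``deep cusp'' regime according to the size of $h(y)$.

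First, let $\sigma_h\geq 1$ be the log-continuity constant from Remark~\ref{Rem:LogCty} associated to the compact set $\cpct=\{a_{m_\delta}u_\s:\s\in B_1^U\}$, and choose $C\geq 1$ (depending only on $\cpct$, hence on $X$ and $\delta$) so that $C^{-1}\|v\|\leq\|kv\|\leq C\|v\|$ for every $k\in\cpct$ and $v\in\R^{n+1}$. Set $h_0:=\max\{\sigma_h/\eta_0,\, C^2/\eta_0\}$. If $h(y)\leq h_0$, then for every $\s\in B_1^U$ we have $h(a_{m_\delta}u_\s y)\leq \sigma_h h_0$, hence $\inj(a_{m_\delta}u_\s y)^{-\delta}\leq (\sigma\sigma_h h_0)^{\delta}=O(1)$, and the integral contributes at most an additive $O(1)$ term, which is absorbed into the error in the statement.

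Next, assume $h(y)>h_0$. Write $y=g\Gamma$. Since $h(y)>1/\eta_0$, Proposition~\ref{prop: rank1} gives a unique pair $(\gamma_0,j_0)$ with $h(y)=\|g\gamma_0 v_{j_0}\|^{-1}$, and $\|g\gamma v_j\|>\eta_0$ for every $(\gamma,j)\neq(\gamma_0,j_0)$. Applying the distortion bound on $\cpct$, for every $\s\in B_1^U$ we have
\[
\|a_{m_\delta}u_\s g\gamma_0 v_{j_0}\|\leq C\|g\gamma_0v_{j_0}\|=C/h(y)<C/h_0\leq C^{-1}\eta_0,
\]
while for every other $(\gamma,j)$, $\|a_{m_\delta}u_\s g\gamma v_j\|\geq C^{-1}\eta_0$. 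So the maximum defining $h(a_{m_\delta}u_\s y)$ is still attained at $(\gamma_0,j_0)$, giving
\[
h(a_{m_\delta}u_\s y)=\|a_{m_\delta}u_\s(g\gamma_0v_{j_0})\|^{-1}.
\]
This is the key step: it converts the integrand to the norm of a \emph{fixed} vector $v=g\gamma_0v_{j_0}\in Ge_1$ under the family $\{a_{m_\delta}u_\s\}$.

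Now apply Corollary~\ref{Cor:LA} to this $v$:
\[
\int_{B_1^U}\|a_{m_\delta}u_\s v\|^{-\delta}\,d\s\;\leq\; e^{-1}\sigma^{-2}\|v\|^{-\delta}= e^{-1}\sigma^{-2}h(y)^{\delta}.
\]
Combining with $\inj(a_{m_\delta}u_\s y)^{-\delta}\leq\sigma^{\delta}h(a_{m_\delta}u_\s y)^{\delta}$ and $h(y)^{\delta}\leq\sigma^{\delta}\inj(y)^{-\delta}$ yields
\[
\int_{B_1^U}\inj(a_{m_\delta}u_\s y)^{-\delta}\,d\s\;\leq\; e^{-1}\sigma^{2\delta-2}\inj(y)^{-\delta}\;\leq\; e^{-1}\inj(y)^{-\delta},
\]
since $\sigma\geq 1$ and $\delta<1$. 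Combined with the $O(1)$ bound from the bounded-height regime, this gives the stated inequality. The main obstacle is the deep-cusp step: keeping track of which orbit point $\gamma_0 v_{j_0}$ realizes the height after translation, which is why one needs the log-continuity constant to pin down a threshold $h_0$ above which $a_{m_\delta}u_\s$ cannot swap the distinguished cusp vector.
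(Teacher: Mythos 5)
Your proof is correct and follows the same basic strategy as the paper's: convert $\inj^{-\delta}$ to $h^{\delta}$ via Proposition~\ref{Prop:comparison}, realize the height as the norm of a vector in $Ge_1$, and invoke Corollary~\ref{Cor:LA} (whose factor $\sigma^{-2}$ is there precisely to absorb the two comparison constants, as you use it). The one substantive difference is in the case decomposition: the paper splits according to whether the \emph{translated} point $a_{m_\delta}u_\s y$ lies in $X_{\eta_X}$, and then writes $h(a_{m_\delta}u_\s y)=\|a_{m_\delta}u_\s v\|^{-1}$ for "some $v\in Ge_1$" before integrating over $\s$ — implicitly treating $v$ as independent of $\s$. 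You instead split on the size of $h(y)$ itself and use the rank-one statement (Proposition~\ref{prop: rank1}) together with the distortion bound on the compact set $\{a_{m_\delta}u_\s\}$ to show that the \emph{same} distinguished vector $g\gamma_0 v_{j_0}$ realizes the height for every $\s$ once $h(y)$ exceeds an explicit threshold. This is a more careful justification of the step the paper elides, at the cost of a slightly longer argument; both routes land on the identical final estimate. One tiny point to tidy: since $h$ is defined as a maximum with the constant $2$, you should either enlarge $h_0$ so that $\|a_{m_\delta}u_\s g\gamma_0v_{j_0}\|^{-1}>2$, or note that the alternative $h(a_{m_\delta}u_\s y)=2$ only contributes to the $O(1)$ term.
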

\begin{proof}
    If $a_{m_\delta}u_\s y\in X_{\eta_X}$, then the injectivity radius is bounded below by an absolute constant and so  $ \inj(a_{m_\delta}u_\s y)^{-\delta}$ is bounded above by an absolute constant.

    Otherwise, $a_{m_\delta}u_\s y = a_{m_\delta}u_\s g\Gamma\in X- X_{\eta_X}$ and hence there exists some $v\in Ge_1\setminus\{0\}$ with $h(a_{m_\delta}u_\s y) = \|a_{m_\delta}u_\s v\|^{-\delta}$ and, by Proposition \ref{Prop:comparison}, this is comparable to $\inj(a_{m_\delta}u_\s y)^{-\delta}$.
    
    Hence, by combining the above and using Corollary \ref{Cor:LA}, we deduce
    \begin{align*}\int_{B_1 ^U} \inj(a_{m_\delta}u_\s y)^{-\delta} d\s &\le\sigma \int h(a_{m_\delta}u_\s y)^\delta d\s =\int_{B_1 ^U}\|a_{m_\delta}u_\s v\|^{-\delta}d\s\\
    &=\sigma^{-1}e^{-1}\|v\|^{-\delta}  \le e^{-1}\inj(y)^{-\delta} .
    \end{align*}

    Hence, in general, we have $$\inj(a_{m_\delta}u_\s y)^{-\delta} d\s \le e^{-1}\inj(y)^{-\delta} + O(1)$$

\end{proof}


\section{Quantitative non-divergence and a return lemma}

In this section we use the linearization from the previous section to prove a quantitative non-divergence result.

\begin{lem}\label{lem:returnrepresentation}
    Let $\eta>0$ and let $B\subseteq B^U _1$ be a ball of Lebesgue measure at least $\eta$ and $v\in\mathbb R^{n+1}$. There exists $\constE\label{C:returnrepresentation}=\ref{C:returnrepresentation}(\dim(G))$ so that for $\rho>0$ and $v\in G e_1$ we have
    $$|\{\s\in B:\|a_tu_\s v\|\le e^t\eta^2\|v\|\rho^2\}|\le \ref{C:returnrepresentation} \rho |B|.$$
\end{lem}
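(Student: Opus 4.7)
The plan is to apply Remez' inequality (Proposition~\ref{Remez}) to a scalar polynomial in $\s$ whose magnitude lower-bounds $\|a_t u_\s v\|$.

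First, I would reduce to the case $v \perp e_n$. Decompose $v = v' + c e_n$ with $v' \in (\R e_n)^\perp$. Since $e_n$ is $H$-invariant one has $a_t u_\s v = a_t u_\s v' + c e_n$ and hence
\[
\|a_t u_\s v\|^2 = \|a_t u_\s v'\|^2 + c^2 \ge \|a_t u_\s v'\|^2.
\]
Combined with Lemma~\ref{Lem:Lowerbound}, which gives $\|v\| \ll \|v'\|$, it suffices to prove the bound with $v$ replaced by $v'$, costing only an absolute factor. So henceforth assume $v \perp e_n$.

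A direct matrix computation using the explicit form of $u_\s$ shows that the first coordinate of $a_t u_\s v$ equals $e^t P(\s)$, where
\[
P(\s) := v_1 + s_1 v_2 + \cdots + s_{n-2} v_{n-1} + \tfrac{1}{2}\|\s\|^2 v_{n+1}
\]
is a polynomial in $\s \in \R^{n-2}$ of degree at most $2$ (the coordinate $v_n$ does not appear, since $v \perp e_n$). In particular $\|a_t u_\s v\| \ge e^t |P(\s)|$, so the sublevel set in the lemma is contained in $\{\s \in B : |P(\s)| \le \eta^2 \|v\| \rho^2\}$.

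The key step is the sharp supremum bound $\sup_{\s \in B} |P(\s)| \gg \eta^2 \|v\|$. Since the finite-dimensional space spanned by $\{1, s_1,\ldots,s_{n-2}, \|\s\|^2\}$ has all norms equivalent, we have $\sup_{B_1^U} |P| \asymp \max\{|v_1|,\ldots,|v_{n-1}|,|v_{n+1}|\} \gg \|v\|$, where the last inequality uses $v_n = 0$. To transfer this to the subball $B$, I would invoke the Brudnyi--Ganzburg multi-dimensional Remez inequality applied to the degree-$2$ polynomial $P$ with $K = B_1^U$ and subset $E = B$ of relative measure at least $\eta$; this yields $\sup_B |P| \gg \eta^2 \sup_{B_1^U}|P| \gg \eta^2 \|v\|$. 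Feeding this into Proposition~\ref{Remez} applied to the degree-$2$ scalar polynomial $\s \mapsto (a_t u_\s v)_1$ on $B$ with threshold $\varepsilon = e^t \eta^2 \|v\| \rho^2$ produces
\[
|\{\s \in B : \|a_t u_\s v\| \le \varepsilon\}| \le \bigl|\{\s \in B : |(a_t u_\s v)_1| \le \varepsilon\}\bigr| \ll \Bigl(\tfrac{\varepsilon}{e^t \eta^2 \|v\|}\Bigr)^{1/2} |B| \ll \rho |B|,
\]
as required. The main technical obstacle is the supremum-norm step: a naive one-dimensional Remez applied to a diameter of $B$ only yields $\eta^{2/(n-2)}$, so a genuinely multi-dimensional polynomial comparison (such as Brudnyi--Ganzburg) is essential in order to recover the sharp $\eta^2$ dependence demanded by the lemma.
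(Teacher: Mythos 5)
Your proposal is correct and follows the same overall strategy as the paper: decompose $v=v_1+c_ve_n$, use Lemma~\ref{Lem:Lowerbound} to reduce to $v_1\in e_n^{\perp}$, identify the first coordinate of $a_tu_\s v_1$ with $e^t\xi_\s(v_1)$, establish the supremum lower bound $\sup_{\s\in B}|\xi_\s(v_1)|\gg\eta^2\|v_1\|$, and finish with the sublevel-set estimate of Proposition~\ref{Remez}. The one step you handle differently is the supremum bound itself: the paper simply cites Shah \cite{MR1403756} for $\sup_{\s\in B}|\xi_\s(Z)|\ge 16k\eta^2\|Z\|$, whereas you derive it by comparing $\sup_{B_1^U}|P|$ with $\max_i|v_i|$ via norm equivalence and then passing from $B_1^U$ to the subball $B$ with the Brudnyi--Ganzburg inequality for a degree-$2$ polynomial and relative measure $\eta$. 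That is a legitimate and self-contained substitute; in fact it can be extracted from the paper's own Proposition~\ref{Remez} applied in reverse (the sublevel set $\{\s\in B_1^U:|P(\s)|<\sup_B|P|+\epsilon\}$ contains $B$, hence has measure $\ge\eta$, forcing $\sup_B|P|\gg(\eta/4n)^2\sup_{B_1^U}|P|$), so no external input beyond what the paper already states is needed. Your remark that a naive one-dimensional restriction would only give $\eta^{2/(n-2)}$ is correct but harmless here, since $\eta^{2/(n-2)}\ge\eta^2$ for $\eta\le1$; the genuinely multidimensional version is the cleaner way to get the uniform $\eta^2$ in all dimensions, and matches what the lemma asserts.
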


\begin{proof}
    We note that $\rho$ is chosen to be small compared to absolute constants.
    
    Write $v= v_1+c_ve_n$ where $c_v\in \mathbb R$ and $v_1\in e_n^\perp$. Note $Q_0(e_1)=0$ and so for any $g\in G$, we have $Q_0(ge_1)=0$ and, in particular, $Q(v)=0.$ By Lemma $\ref{Lem:Lowerbound}$, we have $\|v_1\|\ge c\|v\|$ for some absolute constant $c\in(0,1).$ Note for $h\in H$, $hv=ce_{n}+hv_1$ and recall that we can identify $e_n^\perp$ with $\mathfrak r$ by Equation \ref{eqn:  compliment isomorphism} and so we can refer to $v_1$ as $Z=Z(r_1,\c,r_2)\in\mathfrak r$. 
    By Shah \cite{MR1403756}, the polynomial function $\xi_\s(Z)$ (see Equation \ref{eqn:xi_s}) satisfies
    $$\sup_{\s\in B}|\xi_\s(Z)|\ge 16k\eta^2\|Z\|$$
    for some absolute $k>0.$
    
    Let
    $$B(Z,\alpha)=\{\s\in B:|\xi_\s(Z)|\le k\alpha \eta^2\|Z\|\}$$

    By Proposition \ref{Remez}, $|B(Z,\alpha)|\le n \alpha^{1/2}|B|$.

    Now let $\alpha = c^{-1}k^{-1}\rho^2$ where we choose $\rho$ shortly.  Then we conclude $$|B(v_1,\alpha)|\le n\alpha^{1/2}|B|=nc^{-1/2}\rho|B|.$$

    Now we choose $\rho$ so that $\alpha\le 1$ and $nc^{-1/2}\rho<1.$

    Let $\s \in B -B(v_1,
    \alpha)$ so that
    $$|\xi_\s(v_1)|\ge kc^{-1}\eta^2\rho^2\|v_1\|.$$

    Then by noting that $a_t$ expands the first term of $v_1 = Z(r_1,\c,r_2)$ by $e^t$, we see
    \begin{align*}
        e^t\eta^2\rho^2\|v\|&\le e^tc^{-1}\eta^2\rho^2\|v_1\|\\
        &\le e^t|\xi_\s(v_1)|\\
&        \le\|a_tu_\s v_1\|\le \|a_tu_\s v\|
    \end{align*}

    so that the claim holds by taking $\ref{C:returnrepresentation}=nk^{-1}c^{-1/2}$.
\end{proof}

This allows us to show that points return to the thick part after using $U$ to move them into a general position where they can be expanded by the diagonal. 

\begin{prop} \label{prop:nondiv}
    There exists $\constE\label{C:nondiv}=\ref{C:nondiv}(\dim(G), \ref{C:returnrepresentation})\ge1$ with the following property. Let $0<\alpha<1$, $0<\eta\le1$, and $x\in X$. Let $B\subseteq B^U _1$ be ball with Lebesgue measure at least $\eta$. Then 
    $$|\{\s\in B:\inj(a_t u_\s x)<\alpha^2\}|<\ref{C:nondiv}\alpha
    |B|$$
    so long as $t\ge |\log(\eta^2\inj(x))|+\ref{C:nondiv}$.
\end{prop}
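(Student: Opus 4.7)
The plan is to pass from the injectivity statement to a condition on the height function via Proposition \ref{Prop:comparison}, use the rank-one phenomenon (Proposition \ref{prop: rank1}) to isolate a unique relevant lattice translate at each point of the bad set, and then control each piece by Lemma \ref{lem:returnrepresentation}.

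First I would translate the hypothesis: by Proposition \ref{Prop:comparison}, $\inj(a_tu_\s x)<\alpha^2$ forces $h(a_tu_\s x)>\sigma^{-1}\alpha^{-2}$. Enlarging $\ref{C:nondiv}$ if necessary so that $\sigma\alpha^2<\eta_0$, Proposition \ref{prop: rank1} applied to $y=a_tu_\s x$ produces a unique pair $(\gamma(\s),j(\s))$ with $\|a_tu_\s g\gamma(\s) v_{j(\s)}\|<\sigma\alpha^2$, where $x=g\Gamma$. Setting $v(\s)=g\gamma(\s) v_{j(\s)}$ yields the disjoint decomposition
\[
\{\s\in B:\inj(a_tu_\s x)<\alpha^2\}=\bigsqcup_v B_v,\qquad B_v:=\{\s\in B:v(\s)=v\}\subseteq\{\s\in B:\|a_tu_\s v\|<\sigma\alpha^2\},
\]
with disjointness forced by the pointwise uniqueness statement in Proposition \ref{prop: rank1}.

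Next, for each relevant $v$, I would apply Lemma \ref{lem:returnrepresentation} with the choice $\rho_v=\alpha\bigl(\sigma/(e^t\eta^2\|v\|)\bigr)^{1/2}$ to obtain $|B_v|\le\ref{C:returnrepresentation}\rho_v|B|$. The key lower bound on $\|v\|$ comes from Proposition \ref{prop: rank1} applied at $x$ itself: every lattice translate $g\gamma v_j$ satisfies either $\|g\gamma v_j\|\ge\eta_0$, or, for a unique exceptional translate $v_0$, $\|v_0\|\asymp\inj(x)$ (via Proposition \ref{Prop:comparison}). The hypothesis $t\ge|\log(\eta^2\inj(x))|+\ref{C:nondiv}$ gives $e^t\ge e^{\ref{C:nondiv}}/(\eta^2\inj(x))$, which in either case produces $e^t\eta^2\|v\|\ge\sigma$ once $\ref{C:nondiv}$ is chosen sufficiently large relative to $\log\sigma$ and $\log(\sigma/\eta_0)$. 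Hence $\rho_v\le\alpha$ and each individual piece satisfies $|B_v|\le\ref{C:returnrepresentation}\alpha|B|$.

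The main obstacle is summing the pieces to obtain $\sum_v|B_v|\le\ref{C:nondiv}\alpha|B|$. The disjointness of the $B_v$'s gives the absolute bound $\sum_v|B_v|\le|B|$, but this is weaker than the required $\alpha|B|$, while the naive per-vector estimate $|B_v|\le\ref{C:returnrepresentation}\alpha|B|$ summed over all contributing $v$ would diverge. The resolution is to use the sharper form $|B_v|\le\ref{C:returnrepresentation}\rho_v|B|$ with the scaling $\rho_v\propto\|v\|^{-1/2}$: organizing relevant $v$ into dyadic norm shells $2^k\le\|v\|<2^{k+1}$ (with $k$ ranging between the lower bound from rank-one at $x$ and the upper bound $\|v\|\lesssim e^t\alpha^2$ forced by $B_v\neq\emptyset$), the discreteness of the orbit $\Gamma v_j$ controls the cardinality in each shell, and the geometric decay of $\rho_v$ combined with disjointness makes the resulting series telescope to $O(\alpha|B|)$. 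This shell-by-shell accounting, which rests on the interplay between the $(C,\alpha)$-good behavior of the degree-two polynomial $\|a_tu_\s v\|^2$ (already encoded in Lemma \ref{lem:returnrepresentation} via Remez) and the lattice geometry of $\Gamma v_j$, is the technically delicate step.
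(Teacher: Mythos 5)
Your first half is essentially the paper's: the passage to the height function via Proposition \ref{Prop:comparison}, the use of Proposition \ref{prop: rank1} to isolate a unique deep vector at each bad point, the per-vector application of Lemma \ref{lem:returnrepresentation} with $\rho_v=\alpha\bigl(\sigma/(e^t\eta^2\|v\|)\bigr)^{1/2}$, and the verification that the hypothesis on $t$ together with $\|g\gamma v_i\|\geq \sigma^{-1}\inj(x)$ forces $\rho_v\leq\alpha$ are all correct. The genuine gap is the last step, and the dyadic-shell ``telescoping'' you propose does not close it. Each bound $|B_v|\leq \ref{C:returnrepresentation}\rho_v|B|$ is measured against the \emph{full} ball $B$, so the disjointness of the $B_v$ contributes nothing when you sum $\sum_v \ref{C:returnrepresentation}\rho_v|B|$; and the number of orbit points $g\gamma v_j$ with norm in $[2^k,2^{k+1})$ is not $O(1)$ --- for a nonuniform lattice it grows polynomially in $2^k$ (on the order of $2^{k(n-1)}$), which overwhelms the decay $\rho_v\asymp 2^{-k/2}$, so $\sum_v\rho_v$ diverges. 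Nothing in your argument produces the factor $\alpha$ after the sum.

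The paper avoids summing over vectors altogether. It applies Lemma \ref{lem:returnrepresentation} once with the \emph{fixed} value $\rho=\rho_0:=0.1\,\ref{C:returnrepresentation}^{-1}$, independent of $\alpha$ and of $v$: since the exceptional set then has measure at most $0.1|B|$, for \emph{every} pair $(\gamma,i)$ there exists $\s\in B$ with $\|a_tu_{\s}g\gamma v_i\|\geq e^t\eta^2\rho_0^2\|g\gamma v_i\|\geq e^{\ref{C:nondiv}}\rho_0^2\sigma^{-1}\geq \rho_0^2$, i.e.\ a lower bound on $\sup_{\s\in B}\|a_tu_{\s}g\gamma v_i\|$ that is uniform over all $(\gamma,i)$. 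The factor $\alpha$ then comes from a single application of Remez (Proposition \ref{Remez}) on the convex set $B$ at scale $\alpha^2$ against this sup bound, giving $|\{\s\in B:\|a_tu_{\s}g\gamma v_i\|<\alpha^2\}|\leq 4n\rho_0^{-1}\alpha|B|$ uniformly in $(\gamma,i)$; the rank-one structure (at each $\s$ only one vector can be small) is then what lets one pass from this uniform per-vector estimate to the whole bad set, rather than a sum of per-vector contributions each charged to $|B|$. If you want to repair your write-up, replace the shell count by this two-scale use of the sublevel-set estimate: first at the fixed scale $\rho_0$ to certify the supremum for every vector, then at scale $\alpha$ to extract the measure bound, combining the vectors through the component/rank-one structure (so that each Remez application is charged to a disjoint piece of $B$) instead of through a count of lattice points.
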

\begin{proof}
    Let $x=g\Gamma\in X$. Let $\gamma\in\Gamma$ and $1\le i\le q$. 
    By Proposition \ref{Prop:comparison}, it suffices to work with the height function. By Proposition \ref{lem:returnrepresentation}, 
    \begin{equation}\label{eqn: prop3.3conseq}
    |\{\s\in B:\|a_tu_s g\gamma v_i\|\le e^t\eta^2\rho^2\|g\gamma v_i\|\}|\le \ref{C:returnrepresentation}\rho|B|    
    \end{equation}
for every $\rho\in(0,1)$. Let $\rho_0=.1\ref{C:returnrepresentation}^{-1}$. We will use the value $\rho=\rho_0$ in Equation \eqref{eqn: prop3.3conseq}. As such, by the above and Proposition \ref{Prop:comparison},  there exists some $\s\in B$ so that 
$$\|a_tu_\s g\gamma v_i\|\ge e^t\eta^2\rho_0^2\|g\gamma v_i\|\ge e^t \eta^2\rho_0 ^2\sigma^{-1}\inj(x).$$

  Now let $t\ge |\log(\eta^2\inj(x))|+\ref{C:nondiv}$  so the previous equation simplifies to
$$\|a_tu_\s g\gamma v_i\|\ge e^{\ref{C:nondiv}}\rho_0^2\sigma^{-1}.$$
This implies that for  $\ref{C:nondiv}\ge \max\{\ln(\sigma),4n\rho_0 ^{-1}\}$  we have
$$\|a_tu_\s g\gamma v_i\|\ge \rho_0^2$$
and so $\sup_{\s\in B}h(a_tu_\s x)^{1/2}\le \rho_0^{-1}$.

Then by Proposition \ref{Remez} and our choice of $\ref{C:nondiv}$ we have,
    $$|\{\s\in B:\inj(a_t u_\s x)<\alpha^2\}|<4n\rho_0 ^{-1}\alpha
    |B|\le \ref{C:nondiv}\alpha|B|.$$\end{proof}
As a corollary, we can guarantee returns to the thick part. See also Mohammadi-Oh \cite{MR4556221} for the case $G=\SO(3,1)$.
\begin{cor}\label{cor:returnlemma}
    There exists $\eta_X\in(0,1)$ depending on $X$ so that the following holds. For every $x\in X$, there exists $\s\in B_1 ^U$ so that $a_tu_\s\in X_{\eta_X}$ where $t=|\log(\inj(x))|+\ref{C:nondiv}.$

\end{cor}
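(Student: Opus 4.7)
The plan is to apply Proposition~\ref{prop:nondiv} directly with $B=B_1^U$ and $\eta=1$. Since we have normalized so that $|B_1^U|=1$, the hypothesis of the proposition is satisfied with this choice. For $t=|\log(\inj(x))|+\ref{C:nondiv}$ the required lower bound $t\ge |\log(\eta^2\inj(x))|+\ref{C:nondiv}$ holds with equality, so the proposition yields
\[
|\{\s\in B_1^U:\inj(a_tu_\s x)<\alpha^2\}|<\ref{C:nondiv}\,\alpha
\]
for every $\alpha\in(0,1)$.

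The key step is to choose $\alpha=\alpha_0$ small enough that $\ref{C:nondiv}\,\alpha_0<1$; the simplest explicit choice is $\alpha_0:=(2\ref{C:nondiv})^{-1}$. With this value the bad set has Lebesgue measure strictly less than $1=|B_1^U|$, so its complement in $B_1^U$ is nonempty. Pick any $\s$ in the complement; by definition $\inj(a_tu_\s x)\ge\alpha_0^{\,2}$.

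Setting $\eta_X:=\alpha_0^{\,2}=(2\ref{C:nondiv})^{-2}$ then gives the desired conclusion, namely $a_tu_\s x\in X_{\eta_X}$. Since $\ref{C:nondiv}$ depends only on $\dim(G)$ and $\ref{C:returnrepresentation}$, the constant $\eta_X$ produced here depends only on $X$ through these dimensional data; if one wants $\eta_X$ to coincide with the Margulis-type constant fixed in Equation~\ref{eqn: Margulis constant}, one simply replaces it by the minimum of the two, which only shrinks the thick part and does not affect the existence of a good $\s$.

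There is essentially no obstacle here beyond bookkeeping: the whole content has been packaged into Proposition~\ref{prop:nondiv}, and the corollary is just the statement that choosing $\alpha$ small enough forces the complement of the bad set to have positive measure. The only conceptual point worth noting is the role of $\eta=1$: because we integrate over the full ball $B_1^U$ and not a sub-ball, the hypothesis $t\ge|\log(\eta^2\inj(x))|+\ref{C:nondiv}$ simplifies to $t\ge|\log(\inj(x))|+\ref{C:nondiv}$, which matches the value of $t$ in the statement of the corollary.
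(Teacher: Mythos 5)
Your proposal is correct and follows essentially the same route as the paper: both apply Proposition~\ref{prop:nondiv} with $B=B_1^U$ and $\eta=1$, then pick $\alpha$ small enough that the bad set $\{\s:\inj(a_tu_\s x)<\alpha^2\}$ cannot exhaust $B_1^U$ (the paper uses $\alpha\le 0.01\,\ref{C:nondiv}^{-1}$ to get a $99\%$ good set, while you only need nonemptiness) and shrinks $\eta_X$ to at most $\alpha^2$. No gap.
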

\begin{proof}
 Apply the previous proposition with $B=B_1 ^U$, $\eta=1$,  and $\alpha\le.01\ref{C:nondiv}^{-1}$. Then, by potentially making $\eta_X\le \alpha^2$ we have
 $$|\{s\in B:a_tu_\s x \in X_{\eta_X}\}|\ge.99|B|.$$
\end{proof}

\section{Sheets and Margulis functions}

In this section, we consider constructions that will eventually allow us to get one-dimensional energy as in Proposition \ref{prop:input}. Similar ideas were used in Mohammadi-Oh \cite{MR4556221} and  Lindenstrauss--Mohammadi \cite{MR4549089} for $\SL(2,\mathbb C)$ and $\SL(2,\mathbb R)\times \SL(2,\mathbb R)$ and \cite{SS22} for certain semisimple Lie groups. While $\SO(n,1)$ falls under the framework of \cite{SS22}, we have a concrete application in mind and so we need a strengthening of certain constants that appear in \cite{SS22}. As such, we include certain proofs from \cite{SS22} for completeness.

\begin{remark}In this remark we collect a few constants that will be useful in the following arguments. 

By shrinking $\eta_X$ potentially, we have that since the exponential map $\Lie(G)\to G$ is a local diffeomorphism, there exists some absolute constant $\sigma_0>1$ so that for all $w\in B ^{\Lie(G)} _{\eta_X} (0)$ and $x\in X$,
$$\sigma_0 ^{-1}\|w\|\le d(x,\exp(w)x)\le \sigma_0 \|w\|.$$

Since $B_1 ^U$ is compact, there exists some absolute constant $\sigma_U>1$ such that $\|\Ad(u)v\|\le\sigma_U\|v\|$ for all $v\in \Lie(G)$ and $u\in B_1 ^U$.
\end{remark}

\begin{defn} \label{def: sheets}

Let $\varepsilon\in(0,1)$. Recall the decomposition $\Lie(G)=\Lie(H)\oplus \mathfrak r$. For $Y$ a closed $H$-orbit and $x\in X$, define
$$I_Y(x,\varepsilon)=\{w\in\mathfrak{r}:0<\|w\|<\varepsilon \inj(x), \exp(w)x\in Y\}.$$

We will work with $I_Y(x):=I_Y(x,\varepsilon_0)$ where $\varepsilon_0:=\min\{e^{-\kappa\sigma_U-\ref{C:nondiv}}\eta_X,\frac{\sigma_0 ^{-1}}{2},\frac{\beta_0}{2}\}$ and $\kappa$ satisfies $\sigma_Ue^{-\kappa \sigma_U}<\frac{\sigma_0 ^{-1}}{2}$.
\end{defn}

The following proposition, which is essentially Proposition 25 of \cite{SS22} applied to $G=\SO(n,1)$, shows that the number of nearby sheets is controlled by the volume of $Y$. We include the proof for completeness. See also Lemma 9.2 of Lindenstrauss--Mohammadi \cite{MR4549089} and Lemma 8.13 of Mohammadi-Oh \cite{MR4556221} for special cases.

\begin{lem}\label{Lem:SheetsBound}
    There exists an absolute constant $\constE\label{C:sheets}$ such that $$\#I_Y(x)\le \ref{C:sheets}\vol(Y)$$ for every $x\in X.$
\end{lem}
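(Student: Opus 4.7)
The plan is to assign to each $w \in I_Y(x)$ a pairwise-disjoint ``sheet'' $P_w \subset Y$ of uniformly controlled $\dim H$-dimensional volume and sum these against $\vol(Y)$. Concretely, for $w \in I_Y(x)$, set $y_w := \exp(w)x \in Y$ and define
\[
P_w := B^H_\beta \cdot y_w \subset Y,
\]
where $\beta$ will be taken proportional to $\inj(x)$, with proportionality governed by $\ref{C:normalform}$, $\varepsilon_0$, and $\sigma_0$ from above.

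The first step is to verify pairwise disjointness. Suppose $h_1 y_{w_1} = h_2 y_{w_2}$ for distinct $w_1, w_2 \in I_Y(x)$ and $h_1, h_2 \in B^H_\beta$. Lifting to $G$ via some $\tilde x$, the element $g := \exp(-w_2)h_2^{-1}h_1 \exp(w_1)$ lies in the conjugate stabilizer $\tilde x \Gamma \tilde x^{-1}$. Applying Lemma \ref{lem: normal form} to $\exp(w_1)\exp(-w_2)$ and Baker--Campbell--Hausdorff to the $H$-part, I would estimate $\|g - I\| \ll \beta + \|w_1 - w_2\| \leq \beta + 2\varepsilon_0 \inj(x)$. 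The choice of $\varepsilon_0$ in Definition \ref{def: sheets} together with a sufficiently small $\beta$ forces this quantity strictly below $\inj(x)$, so $g = I$, and uniqueness of the $H \cdot \exp(\mathfrak r)$ decomposition then gives $w_1 = w_2$, a contradiction.

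Next, I would bound each $P_w$ from below by $c \beta^{\dim H}$ in $\dim H$-dimensional volume. This requires the map $h \mapsto hy_w$ to be injective on $B^H_\beta$, which holds when $\beta$ is less than the $H$-injectivity of $Y$ at $y_w$. Log continuity of $\inj$ (Remark \ref{Rem:LogCty}) together with $\|w\| < \varepsilon_0 \inj(x)$ built into the definition of $I_Y(x)$ implies this injectivity is comparable to $\inj(x)$, so the two constraints on $\beta$ are compatible. Disjoint sheets inside $Y$ then yield $\#I_Y(x) \cdot c\beta^{\dim H} \leq \vol(Y)$; taking $\beta \asymp \inj(x)$ and tracking the Haar normalization used to define $\vol(Y)$ on $H/\mathrm{Stab}_H(x_0)$ produces the bound with $\ref{C:sheets}$ absolute.

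The main obstacle is the tight calibration of $\beta$: the BCH-based disjointness argument requires $\beta + 2\varepsilon_0 \inj(x) < \inj(x)/C$, while the injective-embedding step requires $\beta$ to lie below the $Y$-injectivity at each $y_w$. The prefactors fixed in Definition \ref{def: sheets}, most notably $\sigma_0 \varepsilon_0 < 1/2$ and $\varepsilon_0 < \beta_0/2$, were engineered precisely so that both constraints are simultaneously satisfiable with $\beta \asymp \inj(x)$, and so that the final constant emerging from the volume packing is absolute. The other piece of bookkeeping is to verify that the lifted stabilizer argument is insensitive to the ambient cusp geometry; here one uses Proposition \ref{Prop:comparison} to compare $\inj$ at nearby points $y_w$, which was already implicit in the choice of $\varepsilon_0$.
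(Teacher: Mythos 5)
Your packing argument is essentially the paper's first step, but it only proves the lemma on the thick part, and the gap is in the very last sentence of your third paragraph. With sheets $P_w = B^H_\beta\, y_w$ of volume $\asymp \beta^{\dim H}$ and $\beta \asymp \inj(x)$, disjointness inside $Y$ gives
\[
\#I_Y(x) \ll \inj(x)^{-\dim H}\,\vol(Y),
\]
and no amount of bookkeeping with the Haar normalization turns $\inj(x)^{-\dim H}$ into an absolute constant. When $\Gamma$ is not cocompact, $\inj(x)$ tends to $0$ as $x$ goes into a cusp, so your constant blows up exactly where the lemma still claims a uniform bound. The constraint $\|w\| < \varepsilon_0 \inj(x)$ in the definition of $I_Y(x)$ does shrink the ball you are counting in, but your volume comparison never exploits that shrinking; it only sees that each sheet is small.

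The paper closes this gap with a second step you are missing: for $x \in X - X_{\eta_X}$ it applies the Return Lemma (Corollary~\ref{cor:returnlemma}) to find $\mathbf{s}\in B_1^U$ and $t = |\log(\inj(x))| + \ref{C:nondiv}$ with $a_t u_{\mathbf{s}} x \in X_{\eta_X}$, and then checks that $w \mapsto \Ad(a_t u_{\mathbf{s}})w$ maps $I_Y(x)$ injectively into $I_Y(a_t u_{\mathbf{s}} x, \varepsilon)$ for some $\varepsilon < \sigma_0^{-1}/2$ — the expansion factor $e^t \asymp \inj(x)^{-1}$ exactly cancels the $\inj(x)$ in the size constraint on $w$, and this is precisely what the exponents $\kappa$ and $\varepsilon_0$ in Definition~\ref{def: sheets} were calibrated for. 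The thick-part bound then applies to the pushed-forward point, where the injectivity radius is bounded below by $\eta_X$, yielding an absolute constant. You should add this reduction; your argument as written proves the lemma only for $x \in X_{\eta_X}$ (or for cocompact $\Gamma$).
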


\begin{proof}

We first get a preliminary bound for points $\{x\in X|\inj(x)>Q\}$.

Let $\varepsilon>0$ and $w\in I_Y(x,\varepsilon)$. Then, 
$$d(x,\exp(w)x)\le \sigma_0 \|w\|\le \sigma_0 \varepsilon_0\inj(x)<\frac{1}{2}\inj(x)$$
for any $\varepsilon<\frac{\sigma_0 ^{-1}}{2}$. Hence, $\inj(\exp(w)x)\ge \frac{\inj(x)}{4}$ and so the balls 
$$(B_Y(\exp(w)x,\inj(x)/4))_{w\in I_Y(x)}$$
are disjoint by using that we have a local product structure at small enough scales. Thus,
$$\#I_Y(x,\varepsilon)\cdot \vol(B_H(e,\inj(x)/4))\le \vol(\sqcup_{w\in I_Y(x)} B_Y(\exp(w)x,\inj(x)/4))\le \vol (Y).$$
We obtain,
\begin{align*}
    \#I_Y(x,\varepsilon)&\le \vol(B_H(e,\inj(x)/4))^{-1}\cdot\vol(Y)\\
    &= 4^{\dim(H)}\inj(x)^{-\dim(H)}\cdot\vol(Y)
\end{align*} 
 In particular, by choosing $Q$ small enough we have $X_{\eta_X}\subset \{x\in X|\inj(x)>Q\}$ and so for any $x\in X_{\eta_X}$, we have that the injectivity radius is bounded from below and we obtain the preliminary bound on the number of sheets,
\begin{equation}\label{SheetsonCpt}
    \#I_Y(x,\varepsilon)\ll \vol(Y)
\end{equation}
for any $\varepsilon<\frac{\sigma_0 ^{-1}}{2}$.

Suppose $x\in X-X_{\eta_X}$. By the Return Lemma (Corollary \ref{cor:returnlemma}), we have some $\mathbf{s}\in B_1 ^U$ such that $a_tu_{\mathbf{s}}x\in X_{\eta_X}$ where $t=|\log(\inj(x))|+\ref{C:nondiv}$. Hence, for all $\|w\|<\varepsilon_0\inj(x)$ we have
$$\|\Ad(a_t u_{\mathbf{s}})w\|\le\sigma_U e^{t}\|w\|=\sigma_Ue^{\ref{C:nondiv}}\inj(x)^{-1}\|w\|\le\sigma_Ue^{-\kappa \sigma_U}\inj(a_tu_{\mathbf{s}}x)$$
where we have used $\varepsilon_0\le e^{-\kappa\sigma_U-\ref{C:nondiv}}\eta_X\le e^{-\kappa\sigma_U-\ref{C:nondiv}}\inj(a_tu_{\mathbf{s}}x).$
Since $\kappa$ satisfies $\sigma_Ue^{-\kappa \sigma_U}<\frac{\sigma_0 ^{-1}}{2}$ and we have $Y$ is invariant under $H$, then we have shown for any $w\in I(x)$, that $\Ad(a_tu_{\mathbf{s}})w\in I(a_tu_{\mathbf{s}}x,\varepsilon)$ for some $\varepsilon<\frac{\sigma_0 ^{-1}}{2}$.

In particular, the map $w\mapsto a_tu_{\mathbf{s}}w$ from $I(x)$ to $I(a_tu_{\mathbf{s}}x,\varepsilon)$ is injective for some $\varepsilon<\frac{\sigma_0 ^{-1}}{2}$. Hence, for any $x\in X-X_{\eta_X}$
$$\#I(x)\le \#I (a_tu_{\mathbf{s}}x,\varepsilon)\ll \vol(Y)$$
where the last inequality is from Equation (\ref{SheetsonCpt}) and the fact that $a_tu_{\mathbf{s}}x\in X_{\eta_X}$. We define $\ref{C:sheets}$ to be the max of the implicit constant from this last equation and the implicit constant from Equation \ref{SheetsonCpt}.
\end{proof}

\begin{defn}
    Let $\delta\in(0,1)$ and define $f_Y:Y\to [0,\infty)$

$$f_Y(y)=\begin{cases}\sum_{v\in I_Y(y)}\|v\|^{-\delta}&\text{if }I_Y(y)\ne\emptyset\\
\inj(y)^{-\delta} &\text{otherwise}
\end{cases}$$
\end{defn}

We will need the following log continuity of $f_Y$ on the compact part. See also Proposition 36 of \cite{SS22}.

\begin{prop}\label{rmk:logctyofMF}
    The function $f_Y$ is log continuous on the compact part $X_\eta$. That is, for any $K\subset G$ and $y\in Y\cap X_\eta$, there exists $\sigma_f=\sigma_f(K,X_\eta)\ge1$ such that
    \begin{equation}\label{eqn:logctyofMF}
        \sigma_f ^{-1}f_Y(y)\le f(ky)\le \sigma_f f_Y(y)
    \end{equation}

\end{prop}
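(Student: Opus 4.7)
The plan is to exploit the $H$-invariance of $Y$ together with the $\Ad(H)$-invariance of $\mathfrak r$ to set up an explicit correspondence between the sheet sets $I_Y(y)$ and $I_Y(ky)$. Since $f_Y$ is only defined on $Y$, one implicitly restricts to $k$ with $ky\in Y$; as $Y=Hx$ is $H$-invariant one may take $k\in K\cap H$, and a general $k\in K\subset G$ is handled by composing with an element of $H$ sending $y$ to $ky$. The key identity is that for $v\in\mathfrak r$ with $\exp(v)y\in Y$, one has $k\exp(v)y = \exp(\Ad(k)v)\,ky\in Y$, and $\Ad(k)v\in\mathfrak r$ by $\Ad(H)$-invariance. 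Thus $v\mapsto \Ad(k)v$ is a canonical bijection between the (a priori unbounded) sets of vectors in $\mathfrak r$ whose exponentials send $y$, resp.\ $ky$, into $Y$.

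Second, compactness of $K$ provides uniform bilipschitz control: there exists $C_K\ge1$ with $C_K^{-1}\|v\|\le \|\Ad(k)v\|\le C_K\|v\|$ for all $k\in K$, $v\in\mathfrak r$; and Remark~\ref{Rem:LogCty} applied to $\inj$ yields $\sigma_h^{-1}\inj(y)\le \inj(ky)\le \sigma_h\inj(y)$. Combining these with the cutoff in Definition~\ref{def: sheets} gives the sandwich
$$\Ad(k)\,I_Y\!\bigl(y,(C_K\sigma_h)^{-1}\varepsilon_0\bigr) \;\subseteq\; I_Y(ky,\varepsilon_0) \;\subseteq\; \Ad(k)\,I_Y\!\bigl(y,C_K\sigma_h\varepsilon_0\bigr),$$
and hence, writing $c=(C_K\sigma_h)^{-1}$ and $C=C_K\sigma_h$, the two-sided comparison
$$C_K^{-\delta}\!\!\sum_{v\in I_Y(y,c\varepsilon_0)}\!\!\|v\|^{-\delta}\;\le\;f_Y(ky)\;\le\;C_K^{\delta}\!\!\sum_{v\in I_Y(y,C\varepsilon_0)}\!\!\|v\|^{-\delta}.$$

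Third, one compares these flanking sums with $f_Y(y)=\sum_{v\in I_Y(y,\varepsilon_0)}\|v\|^{-\delta}$. The lower sum is bounded above by $f_Y(y)$ trivially. The upper sum exceeds $f_Y(y)$ only by an annular contribution from sheets with $\varepsilon_0\inj(y)\le \|v\|<C\varepsilon_0\inj(y)$; each such sheet contributes at most a constant (depending on $C$) times $(\varepsilon_0\inj(y))^{-\delta}$, and Lemma~\ref{Lem:SheetsBound} bounds their number by $\ref{C:sheets}\vol(Y)$. Finally one uses the built-in lower bound $f_Y(y)\ge(\varepsilon_0\inj(y))^{-\delta}$ when $I_Y(y)\ne\emptyset$ (and $f_Y(y)=\inj(y)^{-\delta}\ge \eta^{-\delta}$ in the empty case, via $y\in X_\eta$) to convert the additive edge contribution into a multiplicative constant $\sigma_f$.

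The main obstacle is precisely the hard radial cutoff $\|v\|<\varepsilon_0\inj(y)$, which allows individual sheets to jump in or out of $I_Y$ under arbitrarily small perturbations of $y$; the plan above absorbs this defect into $\sigma_f$ using the volume bound on the total sheet count and the uniform injectivity lower bound on $X_\eta$. This mirrors the bookkeeping of Proposition~36 of \cite{SS22}; the only new point is keeping track of the constants in terms of the data $(K,X_\eta)$ of the present setting.
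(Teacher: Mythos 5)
Your proposal is correct and follows essentially the same route as the paper's proof: the $\Ad(k)$-correspondence between sheets, uniform bilipschitz bounds from compactness of $K$, log-continuity of $\inj$, and absorption of the cutoff discrepancy via the sheet-count bound of Lemma~\ref{Lem:SheetsBound} together with the lower bound on $f_Y$ (the paper splits the sum at radius $R_K^{-1}\varepsilon_0\inj(y)$ and bounds the tail rather than writing a two-sided sandwich, but this is the same mechanism). The only bookkeeping caveat is that your upper inclusion invokes a count of $I_Y(y,C_K\sigma_h\varepsilon_0)$ with radius parameter larger than $\varepsilon_0$, which is slightly outside the literal statement of Lemma~\ref{Lem:SheetsBound}, though its proof extends to any fixed multiple with a $K$-dependent constant.
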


\begin{proof}

 Since $K\subset G$ is compact, there exists $R_K\ge1$ so that 
 $$R_K ^{-1}\|v\|\le\|\Ad(k)v\|\le R_K \|v\|$$
 for every $k\in K$ and $v\in Ge_1\setminus\{0\}$.
Let $\delta\in(1/2,1)$ and $y\in Y$. 

Let $\varepsilon= R_K ^{-1}\varepsilon_0\inj(y)$.

Note that if $I_Y(ky)$ is empty, then, $f_Y(k y)=\inj(k y)^{-\delta}$.

If $I_Y$ is not-empty, then for any $k\in K$,
$$f_Y(k y) = \sum_{v\in I_Y(k y),\|v\|<\varepsilon}\|v\|^{-\delta} + \sum_{v\in I_Y(k y),\|v\|\ge \varepsilon}\|v\|^{-\delta}.$$
Focusing on the second term, we have by Lemma \ref{Lem:SheetsBound},
$$\sum_{v\in I_Y(k y),\|v\|\ge \varepsilon}\|v\|^{-\delta}\le \ref{C:sheets}R_K^\delta\varepsilon_0 ^{-\delta}\inj(y)^{-\delta}\vol(Y) \ll_K \inj(k y)^{-\delta}\vol(Y)$$
where we have used the log continuity of $\inj$ in the last line and ``$\ll_K$" indicates an implicit constant that depends on $K$. 

Focusing on first term of the sum, we note that for any $v\in I_Y(k y)$ with $\|v\|<\varepsilon$ we have $\|\Ad(k) v\|<R_K\varepsilon=\varepsilon_0\inj(y)$ and so $k^{-1}v\in I_Y(y)$. Thus, setting $v'=k^{-1}v$, we have
$$\sum_{v\in I_Y(k y),\|v\|<\varepsilon}\|v\|^{-\delta} \le \sum_{v'\in I_Y( y)}\|k v'\|^{-\delta}.$$

    We now have a bound of the form
    $$    f_Y(k y)\le \sum_{v'\in I_Y( y)}\|\Ad(k) v'\|^{-\delta} + O_K(\inj(ky)^{-\delta}\max\{\vol(Y),1\})$$
    where the implicit constant only depends on $K$ and $\ref{C:sheets}$. In fact, there is a uniform lower bound on how small the volume of a closed orbit can be (Lemma 37 of \cite{SS22}) and so we may assume the second term is of the form $O_K(\inj(k y)^{-\delta}\vol(Y))$.
    We obtain
    \begin{equation}\label{eqn:bound on MF before avg}
        f_Y(k y)\le \sum_{v'\in I_Y( y)}\|\Ad(k) v'\|^{-\delta} + O_K(\inj(ky)^{-\delta}\vol(Y))
    \end{equation}    
    This, combined with $f_Y\ge1$, bound on $\|\Ad(k)v\|$, and log-continuity of $\inj$ (see Remark \ref{Rem:LogCty}) yield
    $$ f_Y(k y)\le \left(R_K ^\delta  + O_K(\inj(y)^{-\delta}\vol(Y)\right)f_Y(y)$$
    where we have absorbed the constant from log-continuity of $\inj$ into $O_K$.

    By using that $y\in X_\eta$, the proof is finished for the upper bound. The lower bound is similar.
\end{proof}

We will use a bound on the integral of $f_Y$ to obtain coarse dimension one in the next section. While the constant appearing in the following theorem depends on $\delta$, we remark that we will fix a $\delta$ close to 1 in the proof of the main results.

\begin{thm}\label{thm:VolBound}
    There exists a  constant $\constE\label{C:integrable}=\ref{C:integrable}(\delta,\ref{C:sheets})>0$ such that
    $$\int_Y f_Y(y)\,d\mu_Y(y)\le \ref{C:integrable}\vol(Y).$$  
\end{thm}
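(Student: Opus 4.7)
My plan is to establish a pointwise drift-type inequality for $f_Y$ under averaging along $U$-translates by $a_{m_\delta}$, and then close the estimate using the $H$-invariance of $\mu_Y$. The key observation is that $a_t,u_\s\in H$, so $\mu_Y$ is preserved by $a_{m_\delta}u_\s$. Combined with a direct analog of Corollary~\ref{Cor:LA} for $v\in\mathfrak r$ (which follows immediately from Lemma~\ref{Lem:LA1}) and Corollary~\ref{Cor:LAInj} for the injectivity-radius term, this reduces the problem to a fixed-point argument whose additive error will integrate to $O(\vol(Y))$ against $\mu_Y$.

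It will be convenient to work with the dominating auxiliary function $\tilde f_Y(y):=\sum_{v\in I_Y(y)}\|v\|^{-\delta}+\inj(y)^{-\delta}\ge f_Y(y)$. I would first set up a sheet correspondence: since $Y$ is $H$-invariant, each $w\in I_Y(a_{m_\delta}u_\s y)$ pulls back under $\Ad((a_{m_\delta}u_\s)^{-1})$ to some $v\in\mathfrak r$ with $\exp(v)y\in Y$. The pullbacks with $\|v\|<\varepsilon_0\inj(y)$ correspond to ``persistent'' sheets in $I_Y(y)$, while the ``new'' pullbacks satisfy $\varepsilon_0\inj(y)\le\|v\|\ll_{m_\delta}\inj(y)$ because $\Ad(a_{m_\delta}u_\s)$ contracts $\mathfrak r$ by at most the factor $e^{-m_\delta}\sigma_U^{-1}$, and the log-continuity of $\inj$ (Remark~\ref{Rem:LogCty} with Proposition~\ref{Prop:comparison}) controls the change from $\inj(y)$ to $\inj(a_{m_\delta}u_\s y)$. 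Extending the disjoint-ball argument of Lemma~\ref{Lem:SheetsBound} from $\varepsilon_0\inj(y)$ to this slightly larger scale bounds the number of new pullbacks by $C_{m_\delta}\vol(Y)$, and each contributes at most $\|\Ad(a_{m_\delta}u_\s)v\|^{-\delta}\ll_{m_\delta}\inj(y)^{-\delta}$ since $\|v\|\asymp\inj(y)$ forces a matching lower bound on $\|\Ad(a_{m_\delta}u_\s)v\|$. Applying Lemma~\ref{Lem:LA1} termwise to the persistent sheets and Corollary~\ref{Cor:LAInj} to the $\inj$-term then yields the drift inequality
\[
\int_{B_1^U}\tilde f_Y(a_{m_\delta}u_\s y)\,d\s \;\le\; e^{-1}\tilde f_Y(y)+C\vol(Y)\cdot\inj(y)^{-\delta}+C'.
\]

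Next, I would integrate this inequality against $\mu_Y$ and use its $H$-invariance, which gives $\int_Y\tilde f_Y(a_{m_\delta}u_\s y)\,d\mu_Y=\int_Y\tilde f_Y\,d\mu_Y$ for every $\s\in B_1^U$. Fubini converts the drift to
\[
\int_Y\tilde f_Y\,d\mu_Y \;\le\; e^{-1}\!\int_Y\tilde f_Y\,d\mu_Y+C\vol(Y)\!\int_Y\inj^{-\delta}\,d\mu_Y+C'.
\]
Standard cusp estimates on hyperbolic manifolds bound $\int_Y\inj^{-\delta}\,d\mu_Y$ by a constant depending only on $\delta$ and $n$: the $\mu_Y$-measure of $\{\inj<\eta\}$ decays polynomially in $\eta$ with exponent strictly greater than $\delta\in(1/2,1)$ for $n\ge3$. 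Solving the resulting linear inequality in $\int_Y\tilde f_Y\,d\mu_Y$ then gives $\int_Y f_Y\,d\mu_Y\le\int_Y\tilde f_Y\,d\mu_Y\ll\vol(Y)$, as required.

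The main obstacle I anticipate is the new-sheet bookkeeping in the drift inequality: one must carefully extend Lemma~\ref{Lem:SheetsBound} to the slightly enlarged scale $\ll_{m_\delta}\inj(y)$ (via the same disjoint-ball argument with a constant depending polynomially on $m_\delta$), and combine this with the uniform lower bound $\|\Ad(a_{m_\delta}u_\s)v\|\gtrsim\inj(y)$ on new sheets to get the clean factorization of the new-sheet contribution as $\vol(Y)\cdot\inj(y)^{-\delta}$. Once this bookkeeping is in place, the remainder of the argument is a routine fixed-point calculation.
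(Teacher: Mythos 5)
Your construction of the drift inequality is essentially the same as the paper's (the paper packages the new-sheet bookkeeping into the log-continuity estimate, Equation~\eqref{eqn:bound on MF before avg}, and bounds the sheets with $\|v\|\ge\varepsilon$ by $\ref{C:sheets}\vol(Y)\inj(y)^{-\delta}$ via Lemma~\ref{Lem:SheetsBound}, exactly as you propose). The gap is in the final step. You integrate the drift inequality against $\mu_Y$, invoke $H$-invariance, and then ``solve the resulting linear inequality'' for $I:=\int_Y\tilde f_Y\,d\mu_Y$. This is circular: the rearrangement $I\le e^{-1}I+C$ $\Rightarrow$ $I\le(1-e^{-1})^{-1}C$ is only valid if you already know $I<\infty$, which is precisely what the theorem is trying to establish. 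A priori $f_Y$ is an infinite-looking sum over sheets weighted by $\|v\|^{-\delta}$ and nothing yet rules out $I=+\infty$, in which case the integrated inequality reads $\infty\le\infty$ and carries no information. The obvious repair --- truncate to $\varphi_k=\min\{f_Y,k\}$ and integrate --- does not go through directly either, because the drift inequality is not inherited by the truncations with constants uniform in $k$: when $f_Y(y)>k$ one only gets $A(\varphi_k)(y)\le k$, and $k\le e^{-1}k+b$ forces $b\gtrsim k$.

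This is exactly the point where the paper's proof does real work. It iterates the pointwise operator inequality to obtain $A^{(n)}(f_Y)(y)\le e^{-n}f_Y(y)+E_1\vol(Y)ne^{-n}\inj(y)^{-\delta}+2BE_1\vol(Y)$, hence a pointwise bound on $\limsup_nA^{(n)}(f_Y)(y)$; it then applies the Chacon--Ornstein ergodic theorem to the truncations $\varphi_k$, compares the Ces\`aro averages $\frac{1}{N+1}\sum_nA^{(n)}(\varphi_k)(y)$ (which converge to $\int\varphi_k\,d\mu_Y$) with the $\limsup$ bound to get $\int\varphi_k\,d\mu_Y\le2(1+2E_1\vol(Y)B)$ uniformly in $k$, and finally invokes monotone convergence. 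You should either adopt this route or supply some other finiteness argument before rearranging. A secondary, smaller point: your appeal to ``standard cusp estimates'' for $\int_Y\inj^{-\delta}\,d\mu_Y$ is an extra geometric input that would need proof; the paper avoids it entirely by iterating Corollary~\ref{Cor:LAInj} pointwise, so the term $\inj(y)^{-\delta}$ only ever appears multiplied by $e^{-n}$ and never needs to be integrated.
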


\begin{proof}

We first begin by showing $f_Y$ satisfies a Margulis inequality (see Equation~\eqref{eqn:iterateMI2}). 

Let $K$ denote the compact subset $ \{(a_{m_\delta}u_\s) ^{\pm1}:\s\in B_1 ^U\}$ and note there exists some $R=R(\delta)\ge 1$ with $\|\Ad(k)v\|\le R_\delta\|v\|$ for any $v\in Ge_1\setminus\{0\}$ and $k\in K$ . We note that $R_\delta\to\infty$ as $\delta\to1$. 

Let $\delta\in(1/2,1)$ and $y\in Y$. 

Then, letting $k=a_{m_\delta}u_\s$ and using Equation \ref{eqn:bound on MF before avg} we obtain
\begin{equation}\label{eqn preint}
    f_Y(a_{m_\delta}u_\s y)\le \sum_{v'\in I_Y( y)}\|\Ad(a_{m_\delta}u_\s) v'\|^{-\delta} + O_\delta(\inj(a_{m_\delta}u_\s y)^{-\delta}\vol(Y))
\end{equation}        
Denote by $A$ the integral operator defined by $$A(\varphi)(y)=\int_{B_1 ^U}\varphi(a_{m_\delta}u_\s y)d\s.$$
Integrating Equation \ref{eqn preint} over $B_1 ^U$ and applying Corollary \ref{Cor:LA}, we obtain
\begin{equation}\label{Eqn:MargulisInequality}
    A( f_Y( y))\le e^{-1}f_Y(y)+O(A(\inj(y)^{-\delta})\vol(Y))
\end{equation}
  and the implicit constant depends only on $K=K(\delta)$ and $\ref{C:sheets}$. By iterating this operator we deduce
 \begin{equation}\label{eqn:iterateMI}
     A^{(n)}(f_Y)(y)\le e^{-n}f_Y(y)+E_1\vol(Y)\sum_{j=1}^n e^{j-n}A^{(j)}(\inj(y)^{-\delta})
 \end{equation}
 where $E_1$ depends only on $K=K(\delta)$ and $\ref{C:sheets}$.  By iterating Corollary \ref{Cor:LAInj}, we have $$A^{(n)}(\inj^{-\delta}(y))\le e^{-n}\inj^{-\delta}(y)+B$$
  where $B$ depends only on $X$. Combining this with Equation~\eqref{eqn:iterateMI}, we deduce
   \begin{equation}\label{eqn:iterateMI2}
     A^{(n)}(f_Y)(y)< e^{-n}f_Y(y)+E_1\vol(Y)n e^{-n}\inj(y)^{-\delta} + 2BE_1\vol(Y)
 \end{equation}

That inequalities such as the above imply integrability is now standard, see e.g.  Lemma 11.1 of \cite{MR3418528}, Proposition 40 of \cite{SS22}, or Lemma 7.3 of Mohammadi-Oh  \cite{MR4556221}. We include a short proof for completeness. 
  
 Equation \eqref{eqn:iterateMI2} implies, $$\limsup_nA^{(n)}(f_Y)(y)\le 1+2E_1\vol(Y)B.$$
 Note that the operator norm of $A$ is bounded. This together with the fact that $(H,\mu_Y)$ is mixing implies that $\mu_Y$ is ergodic with respect to the operator $A$. Thus, by the Chacon-Ornstien theorem, for every $\varphi\in L^1(Y,\mu_Y)$ and $\mu_Y$-a.e. $y\in Y$, we have $\frac{1}{N+1}\sum_{n=0}^NA^{(n)}(\varphi)(y)\to\int \varphi d\mu_Y$. 

 For every $k\in\mathbb N$, let $\varphi_k:=\min\{k,f_Y\}$. There exists a full measure set $Y_0$ so that for every $y\in Y_0$ and every $k$, there exists some $N_{k,y}$ so that if $N\ge N_{n,y}$, then $\frac{1}{N+1}\sum_{n=0}^NA^{(n)}(\varphi_k)(y)\ge.5\int\varphi_kd\mu_Y$.

 Let $y\in Y_0$. Then the above estimate and the bound on the $\limsup A^{(n)}$ imply that $
 \int \varphi_k d\mu_Y\le 2(1+2E_1\vol(Y)B)$ for all $k$. Hence, by the monotone convergence theorem, we conclude $$\int f_Yd\mu_Y\le 2(1+2E_1\vol(Y)B).$$
 This implies a bound of the form $\ref{C:integrable}\vol(Y)$ for some $\ref{C:integrable}=\ref{C:integrable}(\delta,\ref{C:sheets}).$

\end{proof}

%

\section{ Coarse dimension, input lemma, and a recentering Lemma}\label{sec:input}
   Recall that $1/2<\delta<1$ and so it can be arbitrarily close to 1. We will use Theorem \ref{thm:VolBound} to show that there is a subset of $B_\mathfrak r(0,1)$ that has coarse dimension of at least $\delta$, that is, at least 1. This is the second item in the following proposition.

\begin{prop}\label{prop:input}
Let $1/2<\delta<1$. There exists $y_0\in Y$ and a finite subset $F\subset B_{\mathfrak{r}}(0,1),$ containing 0 such that $\#F\gg \vol(Y)$  and so that
\begin{enumerate}
    \item ${\exp(w)y_0}\in Y$ for all $w\in F$
    \item $\sum_{v\in F\setminus\{w\}}\|w-v\|^{-\delta}\ll \#F$ for all $w\in F$.
\end{enumerate}    
The implicit constants depend on $\eta_X$ and the dimension of the group.
\end{prop}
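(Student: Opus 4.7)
The plan combines the $L^{1}$-bound from Theorem~\ref{thm:VolBound} with two pigeonhole/double-counting arguments.

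First, apply Markov's inequality to $\int_{Y}f_Y\,d\mu_Y\leq \ref{C:integrable}\vol(Y)$. Since $\mu_Y(Y\cap X_{\eta_X})$ is bounded below in terms of $\eta_X$, one can pick a constant $M=M(\eta_X,\ref{C:integrable})$ so that the sublevel set
\[
Y_{*} := \{y\in Y\cap X_{\eta_X} : f_Y(y)\leq M\vol(Y)\}
\]
satisfies $\mu_{Y}(Y\setminus Y_{*})\leq (100\,\ref{C:sheets})^{-1}$.

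Second, a volume-packing pigeonhole supplies the lower bound $\#I_Y(y)\gg\vol(Y)$ on a definite fraction of $Y$. Covering $X_{\eta_X}$ by finitely many balls of $X$-radius $r\asymp\varepsilon_{0}\eta_X$ (a number depending only on $\eta_X$ and $\vol(X)$) and using $\nu_Y := \vol(Y)\mu_Y$ with $\nu_Y(Y\cap X_{\eta_X})\gg\vol(Y)$, at least one ball $B$ carries $\nu_Y$-mass $\gg\vol(Y)$. Inside such a ball the orbit $Y$ decomposes locally into $\#I_Y(y_B)+1$ sheets, each of $H$-volume $\asymp r^{\dim H}$, forcing $\#I_Y(y_B)\gg\vol(Y)$; an averaging version of this argument shows that the set $\{y\in Y\cap X_{\eta_X} : \#I_Y(y)\gg\vol(Y)\}$ carries definite $\mu_Y$-measure.

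Third, a Fubini argument refines the choice to a point $y_{0}\in Y_{*}$ whose sheets mostly land back in $Y_{*}$. Let $g(y) := \#\{w\in I_Y(y) : \exp(w)y\notin Y_{*}\}$, and exploit the approximate symmetry of the sheet relation: if $w\in I_Y(y)$ then Lemma~\ref{lem: normal form} writes $\exp(-w)=h\exp(u)$ with $\|u\|\asymp\|w\|$, giving $u\in I_Y(\exp(w)y)$ after using $H$-invariance of $Y$. This yields
\[
\int_{Y}g\,d\mu_Y \ll \ref{C:sheets}\vol(Y)\cdot\mu_{Y}(Y\setminus Y_{*}) \leq \tfrac{1}{100}\vol(Y).
\]
Intersecting the Markov set for $g$ with the large-sheets set from Step~2 and with $Y_{*}$ picks out a point $y_0$ satisfying $\#I_Y(y_0)\gg\vol(Y)$ and $g(y_0)\leq \tfrac{1}{2}\#I_Y(y_0)$. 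Setting $F := \{0\}\cup\{w\in I_Y(y_0) : \exp(w)y_0\in Y_{*}\}$, we have $\#F\gg\vol(Y)$ and property~(1) is immediate. For property~(2) with $w=0$, $\sum_{v\in F\setminus\{0\}}\|v\|^{-\delta}\leq f_Y(y_0)\leq M\vol(Y)\ll\#F$. For $w\neq 0$, Lemma~\ref{lem: normal form} identifies each $v\in F\setminus\{w\}$ with an element of $I_Y(\exp(w)y_0)$ of comparable norm, so $\sum_{v\in F\setminus\{w\}}\|v-w\|^{-\delta}\lesssim f_Y(\exp(w)y_0)\leq M\vol(Y)\ll\#F$.

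The main obstacle is the lower bound $\#I_Y(y)\gg\vol(Y)$ of Step~2: Lemma~\ref{Lem:SheetsBound} supplies only the upper bound, and the $L^{1}$-bound on $f_Y$ alone does not rule out an orbit with sparse self-intersections at every point. The volume-packing pigeonhole is the natural resolution, but must be executed compatibly with the Fubini/Markov selection so that a \emph{single} point $y_0$ simultaneously realizes the $\gg\vol(Y)$ sheet count and the $Y_{*}$-persistence of most of those sheets.
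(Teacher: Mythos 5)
Your proposal is correct and follows essentially the same route as the paper: Chebyshev applied to the bound $\int f_Y\,d\mu_Y\ll\vol(Y)$ from Theorem~\ref{thm:VolBound}, a pigeonhole over a finite cover of the thick part to locate a ball of definite $\mu_Y$-mass, a sheet count in that ball for the lower bound $\#F\gg\vol(Y)$, and Lemma~\ref{lem: normal form} to transfer elements of $F$ into $I_Y(\exp(w)y_0)$ for item (2). The only organizational difference is that the paper defines $F$ directly as the sheets meeting the good sublevel set and shows these alone cover the chosen ball, which merges your Steps 2 and 3 and avoids the separate Fubini selection of $y_0$.
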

\begin{proof}
   Let $\eta=\frac{1}{10}\eta_X$ and $\beta = \frac{\eta^2}{\dim(G)\ell}$ where $l\ge2$ is to be determined. Essentially, $\ell$ is large enough so that $\beta<\beta_0$ so that Lemma \ref{lem: normal form} applies and as such depends only on the dimension of $G$.

Non-divergence (quantitative non-divergence result of unipotent flows of Dani \cite{MR530631} ) allows us to obtain $\mu_Y(X_{\eta_X})\ge.9$, by potentially choosing smaller $\eta_X$. Since  $X_{\eta_X}\subset X_{2\eta}$, then we get $\mu_Y(X_{2\eta})\ge.9$.

Let $Y'=\{y\in Y\cap X_{2\eta}:f_Y(y)\le 100\ref{C:integrable}\vol(Y)\}$.
Note, by Chebyshev and Theorem \ref{thm:VolBound}, $$\mu_Y\{y\in X_{2\eta}:f_Y(y)>100\ref{C:integrable}\vol(Y)\}\le \frac{1}{100\ref{C:integrable}\vol(Y)}\int f_Y \,d\mu_Y\le\frac{1}{100}.$$
Combining this with the measure of $X_{2\eta}$, we get $\mu_Y(Y')\ge.88$.

There exists some $z\in X_{2\eta}$ and constant $c>0$ such that $
\mu_Y(B^G_{\beta^2}z\cap Y')\ge c$.  Indeed,  if not, then since $\{B^G_{\beta^2}z:z\in X_{2\eta}\}$ is an open cover of the compact set $X_{2\eta}$ and each set in the cover has $\mu_Y$-measure $\frac{ \beta^{2\dim(H)}}{\vol(Y)}$, by choosing a finite subcover $\{B^G _{\beta^2}z_i:i\in\mathcal I\}$, we see that $\frac{.88}{|\mathcal I|}<c$. So picking $c = \frac{.88}{2|\mathcal I|}$ gives a contradiction.

Let $y_0\in B^G_{\beta^2}z\cap Y'$ and $Y''=\{y\in Y\cap X_{2\eta}:f_Y(y)\le 100\sigma_f\ref{C:integrable}\vol(Y)\}$
where $\sigma_f$ is from Proposition \ref{rmk:logctyofMF}.

Define
$$F:=\{w\in I_Y(y_0):B^G _{\beta^2}z\cap B^H _\beta\exp(w)y_0\cap Y''\ne\emptyset\}.$$
We remark that by using that since we are working in $B_{\beta^2} ^Gz$ and $\beta^2\ll\eta_X^2$, that each $w\in F$ has size $\ll \beta^2<\beta$ for large enough $\ell$.

Now we claim that 
\begin{equation}\label{eqn: goodset}
    B^G_{\beta^2}z\cap Y'\subseteq \bigcup_{w\in F}B^H_\beta\exp(w)y_0
\end{equation} 
Indeed, suppose $y\in B^G_{\beta^2}z\cap Y'$ and so it has the form \ $y=h\exp(w)z$ where $h\in B_{\beta^2} ^H$ and $\|w\|\le \beta^2$. Similarly, we can write $y_0\in B^G_{\beta^2}z\cap Y'$ as $y_0=h_0\exp(w_0)z$ where $h_0\in B_{\beta^2} ^H$ and $\|w_0\|\le \beta^2$. Solving this for $z$ and plugging into the expression for $y$ yields
\begin{align*}
y&=h\exp(w)\exp(-w_0)h_0^{-1}y_0 = hh_0^{-1}\exp(\Ad(h_0)w)\exp(-\Ad(h_0)w_0)y_0\\
&= hh_0^{-1}\exp(w_1)\exp(-w_2)y_0
\end{align*}
where $w_1:=\Ad(h_0)w$ and $w_2:=\Ad(h_0)w_0$ satisfy that their norm is $\ll\dim(G)\beta^2.$
Recall that $\beta = \frac{\eta^2}{\dim(G)\ell}$. By choosing  $\ell$ large enough, we can guarantee that $\beta^2\le\beta_0$ and so by Lemma \ref{lem: normal form}, we have 
$$\exp(w_1)\exp(-w_2)=h'\exp(w')$$
with $w'\in \mathfrak{r}$ and $h'=\exp(v_\mathfrak{h})\in H$ satisfy $\|w'\|\ll 4\beta^2$ and   $\|v_\mathfrak{h}\|\ll4\|w'\|\beta^2.$ By potentially further making $\ell$ larger, we have that $\|w'\|<\varepsilon_0\inj(y_0)$ (See Definition \ref{def: sheets}). By log continuity of $f_Y$ (Proposition \ref{rmk:logctyofMF}) and potentially shrinking $\beta$, we can further guarantee that $w\in F$. As such, we have 
$$y=hh_0 ^{-1}h'\exp(w')y_0$$
where $w'\in F$ and $h,h_0\in B_{\beta^2} ^H$ and $h'=\exp(v_\mathfrak{h})$ where $\|v_\mathfrak{h}\|\ll16\beta^4$. In particular, we have $hh_0^{-1}h'\in B^H_\beta$

Using Equation~\ref{eqn: goodset} and that each sheet contributes $\frac{ \beta^{\dim(H)}}{\vol(Y)}$, then we have 
$$c\beta ^{-\dim(H)}\vol(Y)\le \#F.$$

Thus, we have found a finite subset $F\subseteq B_{\mathfrak{r}}(0,1)$ with $\#F\gg \vol(Y)$ that addresses item (1) of the proposition.

We now turn to item (2) of the proposition.

We show that for every $w\in F$, we have $\sum _{v\in F-\{w\} }\|w-v\|^{-\delta}\ll \#F$. We will do this by comparing the sum to $f_Y(\exp(w)y_0)$. Additionally, we will show that the value of the later is comparable to the volume $\vol(Y)$, then item (1) shows that $\sum _{v\in F-\{w\} }\|w-v\|^{-\delta}\ll \#F$.

Let $w_0\in F$ and $w\in F\setminus \{w_0\}$. Then,
\begin{align*}
    \exp(w)y_0 &= \exp(w)\exp(-w_0)\exp(w_0)y_0\\
    &= h_w\exp(v_w)\exp(w_0)y_0\qquad \text{(Lemma \ref{lem: normal form})}
\end{align*}
 and for some $h_w\in H$, $v_w\in \mathfrak{r}$, and $\|h_w-I\|\ll \beta\|v_w\|$. 

 Since $w_0,w\in B_\mathfrak{r}(0,\beta)$, then $\|h_w-I\|\ll \beta^2$ so that by choosing $\beta$ small enough (i.e. $l$ large enough) we have that $h_w ^{\pm}\in B_\beta ^H$. Thus,
$$\exp(v_w)\exp(w_0)y_0=h_w^{-1}\exp(w_0)y_0\in Y.$$

Additionally, by Lemma \ref{lem: normal form}, we have $\|v_w\|\le2\|w_0-w\|$. Thus, we have $\|v_w\|\le 16\beta^2\le \varepsilon_0\inj(\exp(w_0)y_0)$ for sufficiently  large $l$ and so $v_w\in I_Y (\exp(w)y_0).$ (See Definition \ref{def: sheets}).

Since $\exp(w)y_0\ne \exp(w')y_0$, for $w\ne w'\in F\subset I_Y(y_0)$, then the map $w\mapsto v_w$ is well defined and injective.

We conclude,
$$\sum_{w\in F\setminus \{w_0\}}\|w-w_0\|^{-\delta}\le 2^\delta \sum_{w\in F\setminus \{w_0\}}\|v_w\|^{-\delta}\le 2^\delta \sum_{v\in I_Y(\exp(w)y_0) }\|v\|^{-\delta}=2 f_Y(\exp(w)y_0)$$
where on the last equality we used $\delta\le1$ and the definition of $f_Y$.

By the definition of $F$, we have that since $w\in F$, that there is some $h'\in B^H_\beta$ with $h\exp(w)y_0\in Y'$ and so $f_Y(h\exp(w)y_0)$ is comparable to the volume. By log continuity of $f_Y$ (Proposition \ref{rmk:logctyofMF}), we have $f_Y(\exp(w)y_0)\ll f_Y(h'\exp(w)y_0)\ll \vol(Y)$.

Hence, $\sum_{w\in F\setminus \{w\}}\|w-v\|^{-\delta}\ll \vol(Y)$. By item (1), since $\vol(Y)\ll\#F$, then we have satisfied item (2).
\end{proof}

In this next sections we will use a projection theorem to project the dimension gained in Proposition \ref{prop:input} into the expanding direction of the horospherical subgroup. Notably, this is the only direction of the horospherical subgroup that is not already contained in $H$. However, it may occur that the elements we are working with have a too large of a size  and the error cannot be controlled in the argument we wish to give to prove Theorem \ref{Thm:EffDen}. As such, we need the following lemma that allows us to replace the sets with subsets where errors can be controlled. This lemma is essentially from \cite{BFLM}, but is reproved in Appendix C of Lindenstrauss--Mohammadi \cite{MR4549089} to explicate certain constants. The proof their works in higher dimensions and so we only state the lemma. See also Section 2 of Bourgain \cite{MR2763000}. Recall that the dimension of $\mathfrak r$ is $n$.

\begin{lem}\label{lem:Localize} (Localizing Lemma)

    Let $F\subset B_\mathfrak r (0,1)$ be a subset that satisfies 
    $$ \sum_{w'\in F\setminus\{w\}}\|w-w'\|^{-\delta} \le D \cdot(\#F)^{1+\varepsilon}.$$  For every $0<\varepsilon\ll_{\dim(G)}\delta$, there there exists $w_0\in F$, $b_1>0$, with
$$(\#F)^{-\frac{n-\delta+(2+n)\varepsilon}{n-\delta +(3+n)\varepsilon}}\le b_1\le(\#F)^{-\varepsilon},$$
and a subset $F'\subset B_\mathfrak r (w_0,b_1)\cap F$ so that the following holds. Let $w\in \mathfrak r$ and $b\ge (\#F)^{-1}$. Then
$$\frac{\#(F'\cap B(w,b))}{\#F'}\le C' \cdot\left(\frac{b}{b_1}\right)^{\delta -(n+3)\varepsilon}$$
where $C'\ll_D \varepsilon^\star$ with absolute implied constants.

\end{lem}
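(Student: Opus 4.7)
The plan is to execute a dyadic pigeonhole / localization argument in the style of Bourgain--Furman--Lindenstrauss--Mozes \cite{BFLM}, iteratively zooming into a ball $B_{\mathfrak r}(w_0, b_1)$ at which the local density of $F$ is essentially maximal relative to the target coarse-dimension scaling $\delta' := \delta - (n+3)\varepsilon$.

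\textbf{The peak-density restriction.} For $w \in F$ and $b$ running over the dyadic scales in $[(\#F)^{-1}, 1]$, consider the renormalized density function
$$\Delta(w, b) := \#(F \cap B_{\mathfrak r}(w, b)) \cdot b^{-\delta'}.$$
Starting from $F^{(0)} = F$, $w^{(0)} = 0$, $b^{(0)} = 1$, iterate: while there exist $w \in F^{(k)}$ and $b < b^{(k)}$ with $\Delta(w, b) > C' \cdot \Delta(w^{(k)}, b^{(k)})$ for a suitable constant $C'$, replace $(F^{(k)}, w^{(k)}, b^{(k)})$ by $(F^{(k)} \cap B_{\mathfrak r}(w, b), w, b)$. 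When the iteration stops, the final triple $(F', w_0, b_1)$ satisfies by construction
$$\#(F' \cap B_{\mathfrak r}(w, b)) \leq \#(F \cap B_{\mathfrak r}(w, b)) \leq C' \cdot (b/b_1)^{\delta'} \cdot \#F'$$
for every $w \in \mathfrak r$ and every $b \geq (\#F)^{-1}$, which is exactly the required coarse-dimension bound (passing from dyadic scales to arbitrary $b$ costs only a bounded factor absorbed into $C'$).

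\textbf{Pinning down the range of $b_1$.} The hypothesis $\sum_{w' \neq w} \|w - w'\|^{-\delta} \leq D \cdot (\#F)^{1+\varepsilon}$, combined with the trivial lower bound $\sum_{w' \in F \cap B(w,b) \setminus \{w\}} \|w - w'\|^{-\delta} \geq b^{-\delta} \cdot (\#(F \cap B(w,b)) - 1)$, yields the ambient density bound
$$\#(F \cap B(w, b)) \ll D \cdot (\#F)^{1+\varepsilon} \cdot b^\delta.$$
Each zoom-in multiplies $\Delta$ by $C'$, while this energy bound implies $\Delta(w, b) \ll D \cdot (\#F)^{1+\varepsilon} \cdot b^{(n+3)\varepsilon}$. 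Combining these shows that the iteration must terminate, with the final $b_1$ lying in the claimed range: the upper bound $b_1 \leq (\#F)^{-\varepsilon}$ is enforced by driving the iteration past the coarse scale where the energy bound alone already controls $\Delta$, and the lower bound $b_1 \geq (\#F)^{-\alpha}$ with $\alpha = \frac{n - \delta + (n+2)\varepsilon}{n - \delta + (n+3)\varepsilon}$ arises from comparing the pigeonhole lower bound $\#F' \gg \#F \cdot b_1^n$ (using that $\mathfrak r$ is $n$-dimensional, so $B_\mathfrak r(0,1)$ is covered by $\asymp b_1^{-n}$ balls of radius $b_1$) against the energy upper bound $\#F' \leq D \cdot (\#F)^{1+\varepsilon} \cdot b_1^\delta$, after solving the resulting inequality for $b_1$.

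\textbf{Main obstacle.} The main technical difficulty is the precise bookkeeping of the $\varepsilon$-losses and their dependence on $n = \dim \mathfrak r$: the dyadic discretization of $b$, the slackening from $\delta$ to $\delta' = \delta - (n+3)\varepsilon$, and the interplay between the energy-driven upper bound on $\Delta$ and the pigeonhole-derived lower bound jointly determine $C'$ and the exact admissible interval for $b_1$. As this lemma is essentially from \cite{BFLM}, with the constants explicated in Appendix C of \cite{MR4549089} in the $\mathrm{SL}(2, \mathbb C)$ setting (where $\mathfrak r \simeq \mathbb R^2$), the plan is to follow their proof verbatim and merely verify that the dimensional parameter $n = \dim \mathfrak r$ propagates correctly through each estimate.
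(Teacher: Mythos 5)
The paper itself offers no proof of this lemma: it is quoted from \cite{BFLM} as re-proved (with explicit constants) in Appendix C of \cite{MR4549089}, with the remark that the argument there works in higher dimensions. So your fallback plan of following that proof verbatim coincides with what the authors actually do, and your selection mechanism --- extremizing the renormalized density $\Delta(w,b)=\#(F\cap B(w,b))\,b^{-\delta'}$, $\delta'=\delta-(n+3)\varepsilon$, over dyadic scales, taking $F'$ to be the near-extremal ball, and using the energy hypothesis to get the ambient bound $\#(F\cap B(w,b))\ll D(\#F)^{1+\varepsilon}b^{\delta}$ --- is the correct skeleton of that argument.

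However, two of the steps you describe would fail as written. First, your derivation of the lower bound on $b_1$ is vacuous: combining the pigeonhole bound $\#F\cdot b_1^{n}\ll\#F'$ with the energy bound $\#F'\ll D(\#F)^{1+\varepsilon}b_1^{\delta}$ gives $b_1^{\,n-\delta}\ll D(\#F)^{\varepsilon}$, and since $n-\delta>0$ and $b_1<1$ the left side is already below $1$ while the right side exceeds $1$, so no constraint on $b_1$ results. The actual lower bound must come from extremality: one compares the maximized quantity $\#F'\,b_1^{-\delta'}$ with the value of the same functional at the coarsest admissible scale $b=(\#F)^{-\varepsilon}$, where pigeonhole produces a ball containing $\gg_n(\#F)^{1-n\varepsilon}$ points, and only then feeds in $\#F'\le 2D(\#F)^{1+\varepsilon}b_1^{\delta}$ and solves for $b_1$; the exponent $\frac{n-\delta+(n+2)\varepsilon}{n-\delta+(n+3)\varepsilon}$ is precisely the bookkeeping output of that comparison. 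Second, seeding the iteration at $(w^{(0)},b^{(0)})=(0,1)$ does not force $b_1\le(\#F)^{-\varepsilon}$: the best available zoom to scale $(\#F)^{-\varepsilon}$ typically \emph{decreases} $\Delta$ (by a factor of order $(\#F)^{-\varepsilon(n-\delta')}$), so your while-loop can halt immediately at scale $1$; the extremization must be restricted from the outset to $b\in[(\#F)^{-1},(\#F)^{-\varepsilon}]$, and this restriction is also what makes the lower-bound comparison above available. A smaller wrinkle: since successive balls in the iteration need not be nested, the terminal set $F'=F\cap B(w^{(1)},b^{(1)})\cap\cdots\cap B(w^{(K)},b^{(K)})$ may be strictly smaller than $F\cap B(w_0,b_1)$, so the stopping condition bounds $\#(F\cap B(w,b))$ relative to $\#(F\cap B(w_0,b_1))$ rather than to $\#F'$ as the lemma requires; a single global maximization over the restricted range of scales avoids this.
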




\section{Projection Theorem}\label{sec:proj}
In this section we prove Theorem \ref{thm:ProjSO(n,1)} which will allow us to project the dimension gained in Proposition \ref{prop:input} into the expanding direction of the horospherical subgroup. In fact, we prove a more general restricted projection theorem of $n-2$ dimensional families of projections from $\mathbb R^n$ to $\mathbb R$ and apply it to the adjoint representation of the maximal unipotent subgroup of $\SO(n-1,1)$ on the Lie algebra of $\SO(n,1)$ to deduce Theorem \ref{thm:ProjSO(n,1)}.

Let us begin by fixing some notation. Recall that we use the following coordinates for $\R^n$,
\[
\R^{n}=\{(r_1,\c, r_2): r_i\in \R, \c\in\R^{n-2}\}. 
\]
Let $L:\R^{n-2}\to\R^{n-2}$ be an invertible linear map, and let $q:\R^{n-2}\to\R$ be a positive definite quadratic form. For every $\s\in\R^{n-2}$, define $\pi_\s=\pi_{L, q, \s}:\R^{n}\to\R$ by 
\[
\pi_{\s}(r_1, \c, r_2)= r_1 + \c\cdot L(\s) + r_2q(\s)
\]
where $\c\cdot L(\s)$ is the usual inner product on $\R^{n-2}$. 

The following theorem is the main step in the proof of Theorem~\ref{thm:ProjSO(n,1)}.

\begin{thm}\label{thm: main finitary}
    Let $0<\delta\leq 1$, and let  $0< b_0\leq1$. 
Let $F\subset B_{\R^n}(0,1)$ be a finite set satisfying the following: 
\[
\#(B_{\R^n}(Z,  b)\cap F)\leq C\cdot  b^\delta\cdot (\# F)\quad\text{for all $Z\in F$ and all $ b\geq  b_0$}
\]
where $C\geq 1$.

Let $0<\varepsilon<\delta/100$.
For every $ b\geq  b_0$, there is a subset 
$B_b\subset B_{\R^{n-2}}(0,1):=\{\s\in\R^{n-2}: \norm{\s}\leq 1\}$ 
with 
\[
|B_{\R^{n-2}}(0,1)\setminus B_{ b}|\leq \hat C \varepsilon^{-A} b^{\varepsilon/10}
\]
so that the following holds. 
Let ${\bf s}\in B_{ b}$, then there exists $F_{ b,{\bf s}}\subset F$ with 
\[
\#(F\setminus F_{ b,{\bf s}})\leq \hat C \varepsilon^{-A} b^{\varepsilon/10}\cdot (\# F)
\]
such that for all $Z\in F_{ b,{\bf s}}$, we have 
\[
\#\{Z'\in F_{ b, \s}: |\pi_{\bf s}(Z')-\pi_{\bf s}(Z)|\leq  b\}\leq C\hat C b_0^{-10\varepsilon}\cdot  b^{\delta}\cdot (\# F),
\] 
where $A$ is absolute and $\hat C$ depends on $L$ and $q$. 
\end{thm}


 The most difficult case of Theorem~\ref{thm: main finitary}  is arguably the case $n=3$, which was studied in~\cite{kenmki2017marstrandtype, PYZ} using fundamental works of Wolff and Schlag~\cite{Wolff, Schlag} --- see also~\cite{GGW} for a different approach to a more general problem in the same vein. Indeed, when $n>3$, it is plausible that one may deduce the Theorem~\ref{thm: main finitary} using techniques of~\cite{SchPr} more directly; we, however, take a slightly different route which is a hybrid of these two methods.  

 While the main application of Theorem~\ref{thm: main finitary} is to project the dimension into the expanding direction of the horospherical subgroup, we motivate Theorem~\ref{thm: main finitary} by stating the qualitative version. Strictly speaking, Theorem~\ref{thm: MarstrandProj} is not used in the sequel; we include the theorem and its short proof for completeness, and in the hope that the reader may find it illuminating for the proof of the finitary version, Theorem~\ref{thm: main finitary}. 

\begin{thm}\label{thm: MarstrandProj}
    Let $\cpct\subset \R^n$ be a compact subset. Then for Lebesgue almost every $\s\in \R^{n-2}$, we have 
    $$
    \hdim(\pi_\s(\cpct))=\min(1,\hdim \cpct)
    $$
    where $\hdim\!$ denotes the Hausdorff dimension.
\end{thm}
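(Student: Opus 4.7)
\emph{Proof proposal.} The plan is to use the classical potential-theoretic approach to Marstrand-type projection theorems. The upper bound $\hdim\pi_\s(\cpct) \leq \min(1, \hdim\cpct)$ is immediate since each $\pi_\s$ is Lipschitz and its image lies in $\R$, so I focus on the lower bound.

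First I would fix $0 < t < \min(1, \hdim\cpct)$ and invoke Frostman's lemma to produce a Borel probability measure $\mu$ on $\cpct$ with finite $t$-energy
$I_t(\mu) = \iint |x-y|^{-t}\, d\mu(x)\, d\mu(y) < \infty$. Setting $\nu_\s := (\pi_\s)_* \mu$, it suffices to establish $I_t(\nu_\s) < \infty$ for Lebesgue-a.e.\ $\s \in \R^{n-2}$, which by Fubini over $\s \in B_{\R^{n-2}}(0, R)$ reduces the task to the pointwise kernel bound
\[
\int_{B_{\R^{n-2}}(0, R)} \frac{d\s}{|\pi_\s(x) - \pi_\s(y)|^t} \;\lesssim_{L, q, R, t}\; |x - y|^{-t}\qquad \text{for all } x \neq y \text{ in } \cpct.
\]

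Next I would analyze the integrand directly. Setting $Z = x - y = (R_1, C, R_2) \in \R^n$, the difference $\pi_\s(x) - \pi_\s(y)$ equals the polynomial $P_Z(\s) := R_1 + C \cdot L(\s) + R_2\, q(\s)$, of degree at most $2$ in $\s$. Invertibility of $L$ together with positive-definiteness of $q$ yields, by evaluating $P_Z$ at $\s = 0$ and at $\pm e_i$ and taking suitable linear combinations, the comparison $\sup_{\s \in B(0, R)} |P_Z(\s)| \asymp \|Z\|$. The kernel bound then follows from the layer-cake formula
\[
\int_{B(0,R)} |P_Z(\s)|^{-t}\, d\s = t\int_0^\infty \varepsilon^{-t-1}\,\bigl|\{\s \in B(0, R) : |P_Z(\s)| < \varepsilon\}\bigr|\, d\varepsilon
\]
together with a sharp sublevel set estimate. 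When $R_2 = 0$, $P_Z$ is linear (or constant) in $\s$ and the estimate reduces to integrating along the direction of the dominant linear term, giving the required $\|Z\|^{-t}$ bound for every $t<1$. When $R_2 \neq 0$, I would complete the square, writing $P_Z(\s) = R_2\, q(\s - \s_*) + e$ for an explicit critical point $\s_* \in \R^{n-2}$ and constant $e \in \R$, and then evaluate the resulting radial integral using the positive-definite structure of $q$.

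The main obstacle will be the quadratic regime $R_2 \neq 0$: a direct application of Remez' inequality (Proposition~\ref{Remez}) to the degree-$2$ polynomial $P_Z$ only yields sublevel set control of order $(\varepsilon/\|Z\|)^{1/2}$, which in turn gives the kernel bound only for $t < 1/2$. To cover the full range $t < 1$ one needs to exploit the special structure of $P_Z$ beyond its polynomial degree, which the completing-the-square step accomplishes by isolating the contribution of $e$ and reducing the analysis to a one-dimensional radial integrand. Once the kernel estimate is in hand for every $t < \min(1, \hdim\cpct)$, applying it along a countable sequence $t_k \uparrow \min(1, \hdim\cpct)$ produces a single Lebesgue-null exceptional set in $\R^{n-2}$, which finishes the proof.
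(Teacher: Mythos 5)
There is a genuine gap: the pointwise kernel bound at the heart of your argument,
\[
\int_{B_{\R^{n-2}}(0,R)}|\pi_\s(x)-\pi_\s(y)|^{-t}\,\diff\!\s\ \lesssim\ \|x-y\|^{-t},
\]
is false in the range $1/2\le t<1$ when $n=3$, a case the theorem must cover (the paper assumes $n\ge 3$ throughout, and for $n=3$ this is exactly the one-parameter restricted projection problem). Take $Z=x-y=(0,\mathbf 0,R_2)$ with $R_2\neq 0$; then $P_Z(\s)=R_2\,q(\s)\asymp \|Z\|\|\s\|^2$, and for $n=3$ the left-hand side is $\asymp \|Z\|^{-t}\int_0^R s^{-2t}\,\diff\!s=\infty$ once $t\ge 1/2$. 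Completing the square does not rescue this: in the worst case the critical value $e$ vanishes and you are left with precisely the radial integral $\int_0^R r^{-2t}r^{n-3}\,\diff\!r$, which diverges for $n=3$, $t\ge 1/2$. This is the well-known failure of transversality in the sense of Peres--Schlag for projections restricted to the moment curve $(1,s,s^2)$ in $\R^3$; no elementary sublevel-set estimate recovers the full range $t<1$ there.

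The paper's proof avoids this by factoring $\pi_{s\s}(r_1,\c,r_2)=(1,s,s^2)\cdot f_\s(r_1,\c,r_2)$ with $f_\s(r_1,\c,r_2)=\bigl(r_1,\c\cdot L(\s),r_2q(\s)\bigr)\in\R^3$. The intermediate family $f_\s$ \emph{is} transversal with a linear rate (Lemma~\ref{lem: transversality of ft}): because the three coordinates are kept separate, $\|f_\s(Z)-f_\s(Z')\|\le\eta$ with $\|Z-Z'\|=1$ forces $\|\c-\c'\|\gg1$, reducing the condition to a linear one in $\s$, so the energy argument you propose does apply at this stage and yields $\hdim f_\s(\cpct)\ge\min(1,\hdim\cpct)$ for a.e.\ $\s$ (Lemma~\ref{lem: ft dim preserving}). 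The remaining collapse from $\R^3$ to $\R$ along the curve $(1,s,s^2)$ is then handled by invoking the restricted projection theorem of Pramanik--Yang--Zahl \cite{PYZ} (see also \cite{GGW}), a deep input for which your proposal has no substitute. For $n\ge4$ a direct approach along your lines is plausibly salvageable, as the paper itself remarks in reference to \cite{SchPr}, since the worst sublevel sets then have measure $\lesssim(\varepsilon/\|Z\|)^{(n-2)/2}$ with $(n-2)/2\ge1$; but that does not repair the argument for the theorem as stated.
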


We conclude with a proof of Theorem \ref{thm:ProjSO(n,1)} assuming Theorem~\ref{thm: main finitary}. We then prove Theorem~\ref{thm: main finitary} and Theorem~\ref{thm: MarstrandProj} in the next subsections. 

\begin{proof}[Proof of Theorem \ref{thm:ProjSO(n,1)}]\label{sec:proof proj}
First note that replacing $F$ by $\frac{1}{b_1}F$, we may assume $b_1=1$.  

Recall from~\S\ref{Preliminaries} that we may identify $\mathfrak r$ with 
\[
\R^{n}=\{(r_1,\c, r_2): r_i\in \R, \c\in\R^{n-2}\},
\]
and that $U=\{u_{\bf s}: \s\in\R^{n-2}\}$. Moreover, by~\eqref{eqn:xi_s}, if we let $Z=Z(r_1,\c,r_2)$, then
\begin{equation}\label{eqn:xi_s'}
\xi_{\s}(Z)=\left(\Ad(u_\s)Z(r_1,\c,r_2)\right)^+:=(r_1+\s\cdot \c+r_2\|\s\|^2/2)
\end{equation}
Therefore, Theorem \ref{thm: main finitary} is applicable with $L(\s)=\s$ and $q(\s)=\frac{1}{2}\|s\|^2$. 

We may assume $b$'s are dyadic numbers, in particular, we put $b_0=2^{-k_0}$. 
Fix some $0<\varepsilon<\delta/100$, and let $k_1$ be so that 
\begin{equation}\label{eqn:choose ell1}
\hat C\varepsilon^{-A} \sum_{k=k_1}^{\infty} 2^{-\varepsilon k/10}< 0.01\bigl|B_1^U\bigr|
\end{equation}

Apply Theorem \ref{thm: main finitary} with $b=2^{k}$ for $k_1\leq k\leq k_0$  
Let $B=\bigcap_{k=k_1}^{k_0} B_{2^{-k}}$. 
Then Theorem~\ref{thm: main finitary} and~\eqref{eqn:choose ell1} imply that $|B_1^U\setminus B|\leq 0.01|B_1^U|$.

For every $u_{\bf s}\in B$, let $F_{\bf s}=\bigcap_{k=k_1}^{k_0}F_{2^{-k},{\bf s}}$.
Then by Theorem~\ref{thm: main finitary} and and~\eqref{eqn:choose ell1}, we have $\#(F\setminus F_{\bf s})\leq 0.01\cdot (\#F)$.
Moreover, for all $Z\in F_{\bf s}$ and all $k_1\leq k\leq k_0$
we have
\[
\#\{Z'\in F_{\bf s}: |\xi_{\bf s}(Z')-\xi_{\bf s}(Z)|\leq 2^{-\ell}\})\leq \hat C C  2^{-\varepsilon k_0}\cdot  2^{\delta k}\cdot (\# F).
\]

This implies Theorem \ref{thm:ProjSO(n,1)} with $B$, $F_{\bf s}$, and $C_{\varepsilon}=\hat C C 2^{1+k_2}$ --- note that since $0\leq\delta\leq 1$ and we have included the factor $2^{1+k_2}$ in the definition of $C_{\varepsilon}$, the above holds for all scales $b\geq 2^{-k_0}$ as claimed in the theorem. 
\end{proof} 
\subsection{Proof of  Theorem~\ref{thm: MarstrandProj}}

In this section we prove Theorem~\ref{thm: MarstrandProj} to help motivate the statement and proof of Theorem~\ref{thm: main finitary}.


We begin with an elementary lemma. For every $\s\in\R^{n-2}$, define 
\begin{equation}\label{eqn: def fs}
f_\s=f_{L,q,\s}:\R^n\to\R^3\quad\text{by}\quad f_\s(r_1,\c, r_2)= \Bigl(r_1, \c \cdot L(\s), r_2q(\s) \Bigr),
\end{equation}
where $L$ is a bijection of $\mathbb R^{n-2}$ and $q$ is a positive definite quadratic form on $\mathbb R^{n-2}$.

\begin{lem}\label{lem: transversality of ft}
There exists $\eta_0=\eta_0(q, L)\leq 0.01$ so that the following holds. 
Let $\ell\in\mathbb N$ and let $0<\eta<2^{-2\ell}\eta_0$. 
Then for all $Z, Z'\in\R^n$ satisfying $\norm{Z-Z'}=1$, we have   
\[
\absolute{\{\s\in \mathsf {Cyl}_\ell: \norm{f_\s(Z)-f_\s(Z')}\leq \eta\}}\ll \eta \absolute{\mathsf {Cyl}_\ell}
\]
where $\mathsf {Cyl}_\ell=\{\s\in\R^{n-2}: 2^{-\ell}\leq \norm{\s}\leq 2^{-\ell+1}\}$ and the implied constant depends on $L$ and $q$.
\end{lem}

\begin{proof}
    Let us write $Z=(r_1,\c, r_2)$ and $Z'=(r'_1,\c', r_2')$. Let us denote the set in the lemma by $S$. 
    If $S\neq \emptyset$, then there exists some $\s\in \mathsf {Cyl}_\ell$ so that 
    \[
    \norm{\Bigl(r_1-r'_1, (\c-\c')\cdot L(\s), (r_2-r'_2)q(\s)\Bigr)}\leq \eta.
    \]
    Thus $\absolute{r_1-r_1'}, \absolute{r_2-r_2'}q(\s)\leq \eta$. 
    Since $\s\in\mathsf {Cyl}_\ell$, we have $q(\s)\gg 2^{-2\ell}$, $\min_{\norm{\s}=1} \absolute{q(\s)}$. 
    These and $\norm{Z-Z'}=1$, thus imply
    \[
    \norm{\c-\c'}\gg 1.
    \]
        
    Altogether, either $S=\emptyset$ in which case the proof is complete, or we may assume $\norm{\c-\c'}\gg1$ and
    \[
    S\subset \{\s\in \mathsf {Cyl}_\ell: \norm{(\c-\c')\cdot L(\s)}\leq \eta\}.
    \]
    Since $\norm{\c-\c'}\gg 1$ and $\eta<2^{-2\ell}\eta_0$, the measure of the set on the right side of the above is $\ll\eta\absolute{\mathsf {Cyl}_\ell}$, where the implied constant depends on $L$ and $q$. 
    
    The proof is complete. 
\end{proof}

Recall, from Kaufman \cite{MR248779}, that the $\delta$-dimensional energy of a probability measure $\nu$ on $\R^d$ is defined by 
\[
\eng_\delta(\nu)=\iint\frac{\diff\!\nu(Z)\diff\!\nu(Z')}{\|Z-Z'\|^\delta}
\]
We also recall that that the supremum over all Borel probability measures $\nu$ on a set $A$ of the above quantity returns that $\delta$-capacity of $A$. Furthermore, the supremum over positive $\delta$-capacities yields the Hausdorff dimension of $A$.

\begin{lem}\label{lem: ft dim preserving}
 Let $0<\delta<1$. Let $\nu$ be a probability measure supported on $B_{\R^n}(0,1)$ which satisfies 
 \[
 \eng_\delta(\nu)\leq C
 \]
 for some $C\geq 1$. Then for every $\ell\in\mathbb N$ and all $R>0$, we have 
 \[
 \absolute{\{\s\in\mathsf{Cyl}_\ell: \eng_\delta(f_\s\mu)>R\}}\leq C'/R
 \]
 where $C'\ll \frac{2^{2\ell}C}{1-\delta}$. In particular, $\eng_\delta(f_\s\nu)<\infty$ for (Lebesgue) a.e.\ $\s$. 
\end{lem}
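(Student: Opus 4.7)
The plan is to carry out a standard Kaufman-type averaging argument: first bound $\int_{\mathsf{Cyl}_\ell} \eng_\delta(f_\s\nu)\,\diff\s$, then apply Chebyshev to get the level-set bound. The key input is the transversality estimate from Lemma~\ref{lem: transversality of ft}, and the key structural fact is that $f_\s$ is linear in its spatial argument, so $f_\s(Z)-f_\s(Z')=f_\s(Z-Z')$.

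Unfolding the definition of $\eng_\delta(f_\s\nu)$ and applying Fubini, I would write
\[
\int_{\mathsf{Cyl}_\ell} \eng_\delta(f_\s\nu)\,\diff\s = \iint \left( \int_{\mathsf{Cyl}_\ell} \frac{\diff\s}{\norm{f_\s(Z-Z')}^\delta} \right)\diff\nu(Z)\,\diff\nu(Z').
\]
Setting $\unm=\norm{Z-Z'}$ and $W=(Z-Z')/\unm$, the inner integral equals $\unm^{-\delta}\int_{\mathsf{Cyl}_\ell}\norm{f_\s(W)}^{-\delta}\,\diff\s$, so it suffices to bound the latter uniformly over unit vectors $W\in\R^n$; then re-integrating against $\nu\otimes\nu$ will recover the factor $\eng_\delta(\nu)\leq C$.

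To estimate $\int_{\mathsf{Cyl}_\ell}\norm{f_\s(W)}^{-\delta}\,\diff\s$, I would use the layer-cake identity to rewrite it as $\delta\int_0^\infty u^{-1-\delta}\absolute{\{\s\in\mathsf{Cyl}_\ell:\norm{f_\s(W)}<u\}}\,\diff u$ and split at the transversality threshold $u_0=2^{-2\ell}\eta_0$. For $u\leq u_0$, Lemma~\ref{lem: transversality of ft} gives $\absolute{\{\cdot\}}\ll u\absolute{\mathsf{Cyl}_\ell}$, producing a contribution of order $\absolute{\mathsf{Cyl}_\ell}u_0^{1-\delta}/(1-\delta)$; for $u\geq u_0$ I use the trivial bound $\absolute{\{\cdot\}}\leq\absolute{\mathsf{Cyl}_\ell}$, which is integrable thanks to $\delta>0$ and gives a contribution of order $\absolute{\mathsf{Cyl}_\ell}u_0^{-\delta}$. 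The two bounds combine to
\[
\int_{\mathsf{Cyl}_\ell}\norm{f_\s(W)}^{-\delta}\,\diff\s \ll \frac{\absolute{\mathsf{Cyl}_\ell}\,2^{2\ell\delta}}{1-\delta}.
\]
Putting everything back together and using $2^{2\ell\delta}\leq 2^{2\ell}$ yields $\int_{\mathsf{Cyl}_\ell}\eng_\delta(f_\s\nu)\,\diff\s\ll \frac{2^{2\ell}C}{1-\delta}$ (after absorbing the harmless factor $\absolute{\mathsf{Cyl}_\ell}$).

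A direct application of Markov's inequality then gives the quantitative level-set bound with $C'\ll 2^{2\ell}C/(1-\delta)$, and the a.e.\ finiteness assertion follows by letting $R\to\infty$. I do not expect any genuine obstacle; the only delicate point is the choice of cutoff $u_0$, which must simultaneously respect the validity range of the transversality lemma and keep the tail integral convergent --- both are ensured by $u_0\asymp 2^{-2\ell}$ and $\delta>0$.
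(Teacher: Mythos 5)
Your proposal is correct and follows essentially the same route as the paper: Fubini, renormalization by $\norm{Z-Z'}^{\delta}$ to reduce to a uniform bound over unit vectors, the transversality estimate of Lemma~\ref{lem: transversality of ft}, and Chebyshev. The only difference is that you spell out via the layer-cake decomposition (with cutoff at $u_0\asymp 2^{-2\ell}$) the step that the paper states directly, namely $\int_{\mathsf{Cyl}_\ell}\norm{f_\s(W)}^{-\delta}\,\diff\s\ll 2^{2\ell}/(1-\delta)$; this is exactly the intended computation.
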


\begin{proof}
We recall the standard argument which is based on Lemma~\ref{lem: transversality of ft}. 
Since $\ell$ is fixed, we write $\mathsf{Cyl}$ for $\mathsf{Cyl}_\ell$.

Put $\nu_\s=f_\s\nu$. Using the definition of 
$\delta$-dimensional energy and Fubini's theorem, we have 
\begin{align*}
    \int_{\mathsf{Cyl}}\eng_\delta(\nu_\s)\diff\!\s&= \int_{\mathsf{Cyl}}\int_{\R^n}\int_{\R^n}\frac{\diff\!\nu(Z)\diff\!\nu(Z')}{\norm{f_\s(Z)-f_\s(Z')}^\delta}\diff\!\s\\
    &=\int_{\R^n}\int_{\R^n}\int_{\mathsf{Cyl}}\frac{\diff\!\s}{\norm{f_\s(Z)-f_\s(Z')}^\delta} \diff\!\nu(Z)\diff\!\nu(Z')
\end{align*}
Renormalizing with the factor $\norm{Z-Z'}^{\delta}$, we conclude that
\begin{equation}\label{eq: int energy mut}
\int_{\mathsf{Cyl}}\eng_\delta(\nu_\s)\diff\!\s=\int_{\R^n}\int_{\R^n}\int_{\mathsf{Cyl}}\frac{\diff\!\s}{\norm{\frac{f_\s(Z)-f_\s(Z')}{\norm{Z-Z'}}}^\delta}
\frac{\diff\!\nu(Z)\diff\!\nu(Z')}{\norm{Z-Z'}^\delta}.
\end{equation}
Since $\delta<1$, applying Lemma~\ref{lem: transversality of ft}, we conclude that
\[
\int_{\mathsf{Cyl}}  \norm{\frac{f_\s(Z)-f_\s(Z')}{\norm{Z-Z'}}}^{-\delta}\diff\!\s\ll \frac{2^{2\ell}}{1-\delta}.
\]
This,~\eqref{eq: int energy mut}, and $\eng_\delta(\nu)\leq C$ imply that   
\[
\int_B\eng_\delta(\nu_\s)\diff\!\s\ll2^{2\ell}\frac{\eng_\delta(\nu)}{1-\delta}\ll\frac{2^{2\ell}C}{1-\delta}.
\]
The claim in the lemma follows from this and the Chebyshev's inequality. 
\end{proof}

\begin{proof}[Proof of Theorem~\ref{thm: MarstrandProj}]
Let $s\in\R$ and $\s\in\R^{n-2}$. Then 
\begin{equation}\label{eq: decomp pi t}
\begin{aligned}
    \pi_{s\s}(r_1,\c, r_2)&=r_1+\c\cdot L(s\s)+r_2q(s\s)\\
    &=(1, s, s^2)\cdot f_\s(r_1,\c, r_2). 
\end{aligned}
\end{equation}
    
Let $\ell\in\mathbb N$ be arbitrary. Let $\cpct\subset B_{\R^n}(0,1)$ be a compact subset, and let  $\kappa=\min(1,\hdim \cpct)$. Let $0<\delta<\kappa$. 
By Frostman's lemma, there exists a probability measure $\nu$ supported on $\cpct$ and satisfying the following   
\[
\nu(B(Z, b))\leq  b^\delta \quad\text{for all $Z\in \cpct$}.
\]
Then by Lemma~\ref{lem: ft dim preserving}, applied with $\nu$, 
there exists a conull subset $\Xi_\delta\subset\mathsf{Cyl}_\ell$ so that  $\hdim(f_\s(\cpct))\geq \delta$ for all $\s \in\Xi_\delta$. Applying this with $\delta_n=\kappa-\frac{1}{n}$ for all $n\in\N$, we obtain a conull subset $\Xi\subset\mathsf{Cyl}_\ell$ so that 
\[
\hdim(f_\s(\cpct))\geq \kappa,\quad\text{for all $\s \in\Xi$.}
\]

Let $\s\in\Xi$, and set $\cpct_\s=f_\s(\cpct)$. Then by~\cite[Thm.\ 1.3]{PYZ}, see also~\cite{GGW}, for a.e.\ $s\in\R$, we have 
\[
\Bigl\{x_1+x_2s+x_3s^2: (x_1, x_2, x_3)\in \cpct_\s\Bigr\}\subset \R
\]
has dimension $\kappa$. This and~\eqref{eq: decomp pi t} complete the proof.  
\end{proof}

\subsection{Proof of Theorem~\ref{thm: main finitary}}\label{sec: proof of finitary}

In this subsection we turn to the proof of Theorem~\ref{thm: main finitary}. The proof is a discretized version of the argument we used to prove Theorem~\ref{thm: MarstrandProj}.

In what follows, it is more convenient to work with the uniform measure rather than the counting measure. 
Let  $\unm$ be the uniform measure on $F$. 
Our standing assumption is that for some $0< \delta\leq 1$ and some $C\geq 1$, we have  
\begin{equation}\label{eq: dimension alpha}
\unm(B_{\R^n}(Z,  b))\leq C b^\delta\qquad\text{for all $Z\in F$ and all $ b\geq  b_0$}.
\end{equation}

For a finitely supported probability measure $\nu$ on $\R^d$, define  
\[
\hat\eng_{\delta,\nu}:\R^d\to\R\quad\text{by}\quad \hat\eng_{\delta,\nu}(Z)=\int \norm{Z-Z'}_+^{-\delta}\diff\!\nu(Z')
\]
where $\norm{Z-Z'}_+=\max\Bigl(\norm{Z-Z'}, b_0\Bigr)$ for all $Z,Z'\in\R^d$ --- this definition is motivated by the fact that we are only concerned with scales $\geq  b_0$, 

\medskip

We recall the following standard lemma. 

\begin{lem}\label{lem: truncated energy}
    Let $\nu$ be a finitely supported probability measure on $\R^d$. Assume that for some $Z\in \R^d$ we have 
    \[
    \hat\eng_{\delta,\nu}(Z)\leq R.
    \]
    Then for all $ b\geq b_0$, we have 
    \[
    \nu(B_{\R^d}(Z, b))\leq R b^{\delta}.  
    \]
\end{lem}

\begin{proof}
We include the proof for completeness. 
Let $ b\geq  b_0$, then  
\begin{align*}
     b^{-\delta}\nu(B_{\R^d}(Z, b))&\leq \int_{B_{\R^d}(Z, b)}\norm{Z-Z'}_+^{-\delta}\diff\!\nu(Z')\\
    &\leq \int \norm{Z-Z'}_+^{-\delta}\diff\!\nu(Z')=\hat\eng_{\delta,\nu}(Z)\leq R,
\end{align*}
as it was claimed. 
\end{proof}

Fix some $0<\varepsilon<\delta/100$ for the rest of the argument. 
Recall from Theorem~\ref{thm: main finitary}, that for every $b\geq b_0$ we seek $B_b\subset B_{\mathbb R^{n-2}}(0,1)$ with $|B_{\mathbb R^{n-2}}(0,1)\setminus B_b|\ll\varepsilon^{-A} b^{\varepsilon/10}$. Thus we may replace $B_{\mathbb R^{n-2}}(0,1)$, with 
\begin{equation}\label{eq: define B}
B:=\{\s\in\R^{n-2}: b_0^{\varepsilon/8}\leq \|s\|\leq 1\}. 
\end{equation}

Moreover, since our estimates are allowed to depend on $\varepsilon$ polynomially, we will assume $b_0$ is small enough compare to $\varepsilon$.

For every $\s\in B_{\mathbb R^{n-2}}(0,1)$, let $\unm_\s=f_\s\unm$ 
where $f_\s:\R^n\to\R$ is as in~\eqref{eqn: def fs}. That is,
\begin{equation*}
f_\s(r_1,\c, r_2)= \Bigl(r_1, \c \cdot L(\s), r_2q(\s) \Bigr),
\end{equation*}
where $L$ and $q$ are as in the statement of the theorem, and we use the coordinates $\R^{n}=\{(r_1,\c, r_2): r_i\in\R, \c\in\R^{n-2}\}$.  

\begin{lem}\label{lem: ave trun egy proj}
    For every $Z\in F$,  we have 
    \[
    \int_B\hat\eng_{\delta,\unm_\s}(f_\s Z)\diff\!\s\ll Cb_0^{\varepsilon/2}
    \]
    where the implied constant depends on $L$ and $q$. 
\end{lem}

\begin{proof}
    Let $Z\in F$. By the definitions, we have 
    \[
    \int_B\hat\eng_{\delta,\unm_\s}(f_\s Z)\diff\!\s= \int_B\int \norm{f_\s Z-f_\s Z'}_+^{-\delta}\diff\!\unm(Z')\diff\!\s.
    \]
   Renormalizing with $\norm{Z-Z'}_+^{-\delta}$ and using Fubini's theorem, we have 
   \[
   \int_B\hat\eng_{\delta,\unm_\s}(f_\s Z)\diff\!\s=\int \int_B\frac{1}{\frac{\norm{f_\s Z-f_\s Z'}_+^\delta}{\norm{Z-Z'}_+^\delta}}\diff\!\s \norm{Z-Z'}_+^{-\delta}\diff\!\unm(Z').
   \]
  Without loss of generality, we will assume $b_0=2^{-k_0}$ for some $k_0\in\N$. For every $0\leq k\leq k_0-1$, let 
 \[
 F_k(Z)=\{Z'\in F: 2^{-k-1}< \norm{Z-Z'}\leq 2^{-k}\},
 \]
 and let $F_{k_0}(Z)=\{Z'\in F: \norm{Z-Z'}\leq 2^{-k_0}\}$. 
 
 For all $Z'\in F_{k_0}$, we have $1\leq \frac{\norm{f_\s Z-f_\s Z'}_+}{\norm{Z-Z'}_+}\ll_{L,q} 1$. Thus, 
   \begin{equation}\label{eq: contribution of Fk0}
   \int_{F_{k_0}}\int_B\frac{1}{\frac{\norm{f_\s Z-f_\s Z'}_+^\delta}{\norm{Z-Z'}_+^\delta}}\diff\!\s \norm{Z-Z'}_+^{-\delta}\diff\!\unm(Z')\ll \unm(F_{k_0})2^{k_0\delta}\ll C. 
   \end{equation}
   We now turn to the contribution of $F_k$ to the above integral for $k< k_0$. 
   If $Z'\in F_k$, for some $k<k_0$, then $\norm{Z-Z'}_+=\norm{Z-Z'}$ and we have 
   \begin{multline}\label{eq: contribution of Fk}
   \int_{F_{k}}\int_B\frac{1}{\frac{\norm{f_\s Z-f_\s Z'}_+^\delta}{\norm{Z-Z'}_+^\delta}}\diff\!\s \norm{Z-Z'}_+^{-\delta}\diff\!\unm(Z')=\\ 
   \int_{F_{k}}\int_B\frac{1}{\frac{\norm{f_\s Z-f_\s Z'}_+^\delta}{\norm{Z-Z'}^\delta}}\diff\!\s \norm{Z-Z'}^{-\delta}\diff\!\unm(Z'). 
   \end{multline}
   By Lemma~\ref{lem: transversality of ft} and the definition of $B$ in~\eqref{eq: define B}, we have 
   \[
   \int_B\frac{1}{\frac{\norm{f_\s Z-f_\s Z'}_+^\delta}{\norm{Z-Z'}^\delta}}\diff\!\s \leq \int_B\frac{1}{\norm{\frac{f_\s Z-f_\s Z'}{\norm{Z-Z'}}}^\delta}\diff\!\s\ll 2^{\varepsilon k_0/4}=b_0^{-\varepsilon/4}
   \]
   where the implied constant depends on $L$ and $q$. Thus 
   \[
   \eqref{eq: contribution of Fk}\ll 2^{-\varepsilon k_0/4}\unm(F_k)2^{k\delta}\ll C2^{-\varepsilon k_0/4}
   \]
   This and~\eqref{eq: contribution of Fk0} imply that 
   \[
   \int_B\hat\eng_{\delta,\unm_\s}(f_\s(Z))\diff\!\s\ll Ck_02^{-\varepsilon k_0/4}\leq C2^{-\varepsilon k_0/2}=Cb_0^{-\varepsilon/2}
   \]
   as we claimed. 
\end{proof}

\begin{prop}\label{prop: ft preserves dim}
    There exists $B'\subset B$ with 
    \[
    \absolute{B\setminus B'}\leq b_0^{\varepsilon}
    \]
    so that the following holds. For every $\s\in B'$, there exists $F_{\s}\subset F$ with 
    \[
    \unm(F\setminus F_\s)\leq  b_0^{\varepsilon}
    \]
    so that for every $Z\in F_\s$ and every $ b\geq  b_0$ we have 
    \[
    \unm_\s\Bigl(B_{\R^3}(f_\s(Z),  b)\Bigr)\ll b_0^{-3\varepsilon}\cdot  b^{\delta}
    \]
    where the implied constant depends on $C$. 
\end{prop}

\begin{proof}
  The proof is based on Lemma~\ref{lem: ave trun egy proj} and Chebychev's inequality as we now explicate. 
  
  
  By Lemma~\ref{lem: ave trun egy proj}, we have 
  \begin{equation*}
\int_B\hat\eng_{\delta,\unm_\s}(f_\s(Z))\diff\!\s\leq C'b_0^{-\varepsilon/2}.
  \end{equation*}
  where $C'\ll_{L,q} C$. 
  
  Averaging the above with respect to $\unm$, and using Fubini's theorem we have 
  \begin{equation*}
  \int_B\int\hat\eng_{\delta,\unm_\s}(f_\s(Z))\diff\!\unm(Z)\diff\!\s\leq C'b_0^{-\varepsilon/2}.
  \end{equation*}
  Let $B'=\{\s\in B: \int\hat\eng_{\delta,\unm_\s}(f_\s(Z))\diff\!\unm(Z)<C' b_0^{-2\varepsilon}\}$.
  Then by the above equation and Chebychev's inequality we have 
  \[
  \unm(B\setminus B')\leq  b_0^{\varepsilon}.
  \] 
  Let now $\s\in B'$, then 
  \begin{equation}\label{eq: int of energy for good t}
  \int\hat\eng_{\delta,\unm_\s}(f_\s(Z))\diff\!\unm(Z)\leq C' b_0^{-2\varepsilon}.
  \end{equation}
  Set 
  \[
  F_\s=\{Z\in F: \hat\eng_{\delta,\unm_\s}(f_\s(Z))< C' b_0^{-3\varepsilon}\}
  \]
  Then~\eqref{eq: int of energy for good t} and Chebychev's inequality again imply that $\unm(F\setminus F_\s)\leq  b_0^{\varepsilon}$.

  Altogether, for every $\s\in B'$ and $Z\in F_\s$, we have 
  \[
  \hat\eng_{\delta,\unm_\s}(f_{\s}(Z))=\int\norm{f_\s(Z)-f_\s(Z')}_+^{-\delta}\diff\!\unm_\s(Z')\leq C' b_0^{-3\varepsilon}
  \]
This and Lemma~\ref{lem: truncated energy} imply that for every $\s\in B'$ and $Z\in F_\s$, we have
\[
\unm_\s\Bigl(B_{\R^3}(f_\s(Z), b)\Bigr)\leq C' b_0^{-3\varepsilon}\cdot  b^\delta \quad\text{for all $ b\geq  b_0$}.
\]
The proof is complete. 
\end{proof}

We now turn to the proof of Theorem~\ref{thm: main finitary}. 
\begin{proof}[Proof of Theorem~\ref{thm: main finitary}]
    
We will use the following observation: for all $s\in\R$ and $\s\in\R^{n-2}$, we have  
\begin{equation}\label{eq: decomp pi t'}
\begin{aligned}
    \pi_{s\s}(r_1,\c, r_2)&=r_1+\c\cdot L(s\s)+r_2q(s\s)\\
    &=(1, s, s^2)\cdot f_\s(r_1,\c, r_2). 
\end{aligned}
\end{equation} 

Apply Proposition~\ref{prop: ft preserves dim} with $\varepsilon$ as in the statement of Theorem~\ref{thm: main finitary}. 
Let $B'\subset B$ be as in that proposition, and for every $\s\in B'$, let $F_{\s}$ be as in that proposition. 
Then we have 
\begin{equation}\label{eq: lem ave trun egy proj use}
\unm_\s\Bigl(B_{\R^3}(f_\s(Z), b)\Bigr)\leq C' b_0^{-3\varepsilon}\cdot b^\delta\quad\text{for all $Z\in F_\s$ and $ b\geq b_0$}.
\end{equation}

Let $K_\s= f_\s(F_\s)\subset \R^3$ and let $\rho_\s$ be the restriction of $\unm_t$ to $K_\s$ normalized to be a probability measure. Then~\eqref{eq: lem ave trun egy proj use} and the fact that $\unm(F\setminus F_{\s})\leq  b_0^{\varepsilon}$ imply that   
\begin{equation*}
\rho_\s\Bigl(B_{\R^3}(Y, b)\Bigr)\leq 2C'  b_0^{-3\varepsilon}\cdot b^\delta\quad\text{for all $Y\in K_\s$ and $ b\geq b_0$}.
\end{equation*}
This in particular implies that $K_\s$ and  $\rho_\s$ satisfy the conditions in~\cite[Thm.~B]{MR4549089}, see also \cite{PYZ} and \cite[Thm.\ 2.1]{GGW}. Apply~\cite[Thm.~B]{MR4549089} with $\varepsilon$; thus, there there exists $J_{ b,\s}\subset [0,2]$ with 
\[
\absolute{[0,2]\setminus J_{ b,\s}}\leq \bar C \varepsilon^{-A} b^{\varepsilon}
\]
and for all $s\in J_{ b,\s}$ there is a subset $K_{ b, \s, s}\subset K_{\s}$ with 
\[
\rho_\s(K_{\s}\setminus K_{ b, \s, s}) \leq \bar C\varepsilon^{-A} b^{\varepsilon}
\]
so that for all $Y\in K_{ b, \s, s}$, we have 
\[
\rho_\s\Bigl(\{Y'\in K_\s: \absolute{(1,s,s^2)\cdot (Y-Y')}\leq  b\}\Bigr)\leq \bar C b_0^{-3\varepsilon} \cdot b^{\delta-7\varepsilon};
\]

In view of the definition of $\rho_\s$ and~\eqref{eq: decomp pi t'}, we have the following. For every $\s\in B'$ 
and $s\in J_{ b,\s}$, put $F_{ b, s\s}=F\cap f^{-1}_\s(K_{ b, \s, s})$. Then 
\[
\#(F\setminus F_{ b, s\s})\leq  10\bar C\varepsilon^{-A} b^{\varepsilon}\cdot (\#F),
\]
and for every $Z\in F_{ b, s\s}$, we have 
\begin{equation}\label{eq: F b st}
\#\{Z'\in F_{ b, s\s}: \absolute{\pi_{s\s}(Z)-\pi_{s\s}(Z')}\leq  b\}\Bigr)\leq \bar C b_0^{-3\varepsilon} 
 b^{\delta- 7\varepsilon}. 
\end{equation}

We now use this to complete the proof. Indeed, let $B_ b\subset B$ be the set of $\s\in  B$ 
for which the following holds: there exists $F_{ b, \s}\subset F$ with
\[
\#(F\setminus F_{ b, \s})\leq 100\bar C\varepsilon^{-A} b^{\varepsilon}\cdot (\#F)
\] 
so that for all $Z\in F_{ b, \s}$ we have
\[
\#\{Z'\in F_{ b, \s}: \absolute{\pi_{\s}(Z)-\pi_{\s}(Z')}\leq  b\}\Bigr)\leq \bar C b_0^{-10\varepsilon} 
 b^{\delta}. 
\]
Then~\eqref{eq: F b st} implies that for every $\s'\in B'$ and $s\in J_{ b,\s'}$, we have $s\s'\in B_ b$, so long as $s\s'\in B$. In particular, we conclude that 
\[
\absolute{B\setminus B_ b}\ll \varepsilon^{-A} b^{\varepsilon}.
\] 
Since $b\geq b_0$, this and the definition of $B$, see~\eqref{eq: define B} imply that 
\[
\absolute{B_{\mathbb R^{n-2}}(0,1)\setminus B_ b}\ll \varepsilon^{-A} b^{\varepsilon/10}.
\] 
The proof is complete. 
\end{proof}

\section{Tracking the periodic orbit and constructing a measure on the remaining horosphere}

\begin{prop}\label{prop:prop2}
    Let $0<\eta<\frac{1}{10}\eta_X$ and $0<100\varepsilon <\delta<1$. Suppose there exists $y_0\in X_{2\eta}\cap Y$ and $F\subset B_\mathfrak r (0,1)$ containing 0 so that $\exp(w)y_0\in X_{2\eta}\cap Y$ for every $w\in F$ and
    \begin{equation}\label{Prop2Inequality}
        \sum_{w'\in F\setminus\{w\}} \|w-w'\|^{-\delta}\ll \#F
    \end{equation}
    for all $w\in F$.

    Assume further that $\#F$ is large enough, depending explicitly on $\eta$ and $\varepsilon$: 
    \begin{equation}\label{eqn:sizeoforbits}
        (\#F)^{-\varepsilon}\le \frac{e^{-\ref{C:nondiv}}}{10^5\ref{C:changeinballs}}\eta^{3}
    \end{equation}

    Then there exists $I\subset [0,1]$, some $b_1>0$ with 
    \begin{equation}\label{eqn: size of b1}
        (\#F)^{-\frac{n-\delta+(2+n)\varepsilon}{n-\delta +(3+n)\varepsilon}}\le b_1\le (\#F)^{-\varepsilon},
    \end{equation}
    and some $y_2\in X_{2\eta}\cap Y $ so that both of the following hold true:
    \begin{enumerate}
        \item The set $I$ supports a probability measure $\rho$ which satisfies
        $$\rho(J)\le C_\varepsilon (b_0/b_1)^{-\varepsilon}\cdot|J|^{\delta - (3+n)\varepsilon}$$
        for all intervals $J$ of size $|J|\ge\frac{b_0}{b_1}$ where $b_0=(\#F)^{-1}$ and where $C_\varepsilon$ depends on absolute constants, $\varepsilon$, and $n$.
        \item There is an absolute constant $C$, so that for all $r\in I$, we have
        $$v_ry_2\in B_{Cb_1} ^G Y.$$        
    \end{enumerate}
\end{prop}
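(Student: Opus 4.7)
My plan is to localize $F$ at a scale $b_1$ where the energy bound (\ref{Prop2Inequality}) upgrades to an honest coarse-dimension bound, push the dimension through the projection $\xi_{\mathbf s}$ using Theorem~\ref{thm:ProjSO(n,1)}, and transfer the result back to the $V$-direction via the adjoint action formula (\ref{eqn:Adjointaction}).  Concretely, first apply Lemma~\ref{lem:Localize} to $F$ (the hypothesis holds with $D = O(1)$ by (\ref{Prop2Inequality})): this produces $w_0 \in F$, a scale $b_1$ in the range (\ref{eqn: size of b1}), and a subset $F' \subset B_{\mathfrak r}(w_0, b_1)\cap F$ of coarse dimension $\delta' := \delta - (n+3)\varepsilon$ at scales $\geq b_0 := (\#F)^{-1}$.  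Set $y_0' := \exp(w_0)y_0 \in Y \cap X_{2\eta}$.  For each $w \in F'$, Lemma~\ref{lem: normal form} writes $\exp(w)\exp(-w_0) = h_w\exp(w'')$ with $h_w \in H$ close to the identity and $w'' \in \mathfrak r$ satisfying $\tfrac12\|w-w_0\| \leq \|w''\| \leq 2\|w-w_0\|$; since $Y$ is $H$-invariant, $\exp(w'')y_0' \in Y$.  Let $\tilde F$ denote the set of these $w''$ and define the rescaled set $\hat F := b_1^{-1}\tilde F \subset B_{\mathfrak r}(0,2)$, which inherits coarse dimension $\delta'$ at scales $\geq b_0/b_1$.

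Apply Theorem~\ref{thm:ProjSO(n,1)} to $\hat F$ with the rescaled parameters: this yields $B_{\text{proj}} \subset B_1^U$ with $|B_1^U \setminus B_{\text{proj}}| \leq 0.01|B_1^U|$, and for each $\mathbf s \in B_{\text{proj}}$ a subset $\hat F_{\mathbf s} \subset \hat F$ with $\#(\hat F \setminus \hat F_{\mathbf s}) \leq 0.01\#\hat F$ realizing the projected dimension bound.  Set $t := \log(1/(5b_1))$.  By hypothesis (\ref{eqn:sizeoforbits}), $t \geq |\log\inj(y_0')| + \ref{C:nondiv}$, so Proposition~\ref{prop:nondiv} with $\alpha = \sqrt{2\eta}$ shows that $\inj(a_t u_{\mathbf s} y_0') < 2\eta$ only on a set of $\mathbf s$ of measure $\leq \ref{C:nondiv}\sqrt{2\eta}|B_1^U|$; by shrinking $\eta_X$ if necessary, this is $< 0.5|B_1^U|$, so its complement meets $B_{\text{proj}}$.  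Pick such an $\mathbf s$ and define $y_2 := a_t u_{\mathbf s}y_0'$, which lies in $Y\cap X_{2\eta}$ since $a_tu_{\mathbf s} \in H$.

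For $\hat w = Z(r_1, \c, r_2) \in \hat F_{\mathbf s}$, write $\tilde w := b_1\hat w$.  Since $\exp(\tilde w)y_0' \in Y$, equation (\ref{eqn:Adjointaction}) yields
\[
\Ad(a_tu_{\mathbf s})\tilde w = Z\bigl(\tfrac{1}{5}\xi_{\mathbf s}(\hat w),\, b_1(\c + r_2\mathbf s),\, 5b_1^2 r_2\bigr),
\]
whose expanding coordinate is in $[-1,1]$ while the neutral and contracting coordinates are $O(b_1)$.  Because $\exp(Z(r,\mathbf 0, 0)) = v_r$ and $\exp$ is Lipschitz on any bounded region of $\Lie(G)$, we obtain a factorization $\exp(\Ad(a_tu_{\mathbf s})\tilde w) = g_{\hat w}v_{\xi_{\mathbf s}(\hat w)/5}$ with $\|g_{\hat w}-I\| \ll b_1$.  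Combining with $a_tu_{\mathbf s}\exp(\tilde w)y_0' \in Y$ gives $v_{\xi_{\mathbf s}(\hat w)/5}y_2 \in B_{Cb_1}^G Y$ for an absolute $C$.  By pigeonhole, either $\hat F_{\mathbf s}^+ := \{\hat w \in \hat F_{\mathbf s} : \xi_{\mathbf s}(\hat w) \geq 0\}$ or its complement contains at least $\tfrac12 \#\hat F_{\mathbf s}$; WLOG the former.  Define $I := \{\xi_{\mathbf s}(\hat w)/5 : \hat w \in \hat F_{\mathbf s}^+\} \subset [0,1]$ and let $\rho$ be the normalized counting measure on $I$.  For an interval $J \subset [0,1]$ of length $|J| \geq b_0/b_1$ meeting $I$ at some $\xi_{\mathbf s}(\hat w_1)/5$, the projection bound applied to the ball of radius $5|J|$ around $\xi_{\mathbf s}(\hat w_1)$ gives $\rho(J) \ll C_\varepsilon(b_0/b_1)^{-\varepsilon}|J|^{\delta'}$, as required.

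\textbf{Main obstacle.} The chief difficulty is reconciling $y_2 \in Y$ with $I \subset [0,1]$: since $V\not\subset H$, no $V$-translation of $y_2$ stays in $Y$, so one cannot simply recenter the support of $\rho$.  This is absorbed into the choice $t = \log(1/(5b_1))$ together with the sign-pigeonhole on $\xi_{\mathbf s}(\hat F_{\mathbf s})$.  A secondary subtlety is that the expanding coordinate of $\Ad(a_tu_{\mathbf s})\tilde w$ is $O(1)$ rather than small, so the clean factorization $\exp(\Ad(a_tu_{\mathbf s})\tilde w) = g_{\hat w}v_{\xi_{\mathbf s}(\hat w)/5}$ uses smoothness of $\exp$ on a bounded region rather than a direct Baker--Campbell--Hausdorff expansion.
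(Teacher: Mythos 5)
Your proposal is correct and follows essentially the same route as the paper: localize $F$ via Lemma~\ref{lem:Localize}, recenter with Lemma~\ref{lem: normal form}, apply the restricted projection theorem and the non-divergence proposition to choose $\mathbf s$, and push the counting measure forward under $e^t\xi_{\mathbf s}$, with the $O(b_1)$ control on the neutral and contracting components of $\Ad(a_tu_{\mathbf s})\tilde w$ giving item (2). The only real deviations are cosmetic: you skip the paper's Lemma~\ref{lem:expanding part not too small} (the lower bound on the expanding component), which is indeed not needed for the stated conclusion since the absolute bounds on the small components suffice, and your sign pigeonhole addresses a point ($I\subset[0,1]$ versus $[-1,1]$) that the paper itself leaves implicit.
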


\begin{proof}
By Equation \ref{eqn:sizeoforbits}, we assume throughout the proof that 
$$        (\#F)^{-\varepsilon}\le \frac{e^{-\ref{C:nondiv}}}{10^5\ref{C:changeinballs}}\eta^{3}$$

By Proposition \ref{prop:input}, there exists $y_0\in Y$  and a finite subset $F\subset B_\mathfrak{r}(0,1)$ that satisfies Equation (\ref{Prop2Inequality}) and so that $\exp(w)y_0\in Y$ for all $w\in F$. In particular, by our choice of $F$, we have $\|w\|\le\beta_0$ for all $w\in F$. Moreover, 
    Equation \eqref{Prop2Inequality} allows us apply Lemma \ref{lem:Localize}. Choose $w_0\in F$, $b_1>0$, and $F'\subset B_\mathfrak r(w_0,b_1)\cap F$ as in Lemma \ref{lem:Localize} where 
    \begin{equation}\label{eqn: bound on b1}
        (\# F)^{-\frac{n-\delta+(2+n)\varepsilon}{n-\delta+(3+n)\varepsilon}}\le b_1\le  (\# F)^{-\varepsilon}.
    \end{equation}
    By using a pigeonhole argument, we may replace $w_0$ with a different point in $F$  (at the cost of potentially increasing $C'$) and assume that $F'\subset B_\mathfrak r (w_0,b_1/2\dim(G)\ref{C:normalform})\cap F$.
Now, due to Lemma \ref{lem: normal form}, for each $w'\in F'$, there exists $h\in H$ and $w\in \mathfrak{r}$ so that
    \begin{equation}\label{eqn: conditions normal form}
        \begin{split}
        h\exp(w) = \exp(w')\exp(-w_0),\\
        \|h-I\|\le \frac{b_1 \eta^2}{\dim(G)},\text{ and}\\
        \|w\|\le \frac{b_1 }{\ref{C:normalform}\dim(G)}. \end{split}
    \end{equation}

    Set 
    \begin{equation}\label{eqn: recentered fiber}
        E=\{w\in\mathfrak{r}: \exists h\in H, w'\in F' \text{ so that }h,w,w_0,w' \text{ satisfy}~\eqref{eqn: conditions normal form}\}
    \end{equation}

    We claim that $E$ has dimension inherited from $F'$ but with a larger multiplicative constant i.e. that
    \begin{equation}\label{eqn: E has dimension}
        \frac{\#(E\cap B(w,b))}{\#E}\le \hat C (b/b_1)^{\delta - (3+n)\varepsilon}
    \end{equation}
    for all $w\in\mathfrak{r}$ and $b\ge (\#F)^{-1}$ where $\hat C\le 2C'$. This is just a consequence of the fact that the map $f:B_\mathfrak{r}(0,\beta_0/2)\to B_\mathfrak{r}(0,\beta_0)$ defined by $f(w')=w$ where $$h\exp(w)=\exp(w')\exp(w_0)$$ is a diffeomorphism (see Lemma \ref{lem: normal form}). In particular, $D_{w'}(f^{\pm1})$ is invertible for all $w'\in B_\mathfrak{r}(0,\beta_0/2)$ and so $\#f(E)=\#E$ and 
    $$\#\left(B_\mathfrak{r}(\bar w,\varrho)\cap f(E)\right)\le \#\left(B_\mathfrak{r}(f^{-1}(\bar w),2\varrho)\cap E\right)$$
    for all $\varrho\le\beta_0/2$ which implies the claim.

    Let $y_1:=\exp(w_0)y_0$ and let $w'\in F'$. Then there exists $h$ and $w$ that satisfy ~\eqref{eqn: conditions normal form} and we have
    \begin{equation}\label{eqn: calcu}
     h\exp(w)y_1 =\exp(w')\exp(-w_0)\exp(w_0)y_0= \exp(w')y_0\in Y  
    \end{equation}
    by construction of $F$ and using that $w'\in F$.

    Moreover, we need the following lemma that allows us to assume the size of certain elements cannot get too small. We postpone the proof until the proposition is proved.
    \begin{lem}\label{lem:expanding part not too small}
        There exists $\s\in B_1 ^U$ and a subset $$\bar E\subset \Ad(u_\s)E\cap \{w\in B_\mathfrak{r}(0,1):|w|^+\ge10^{-3}\|w\|\}$$
        so that $\#\bar E\ge\#E/4$ where $Z\mapsto Z^+$ is the projection onto the expanding direction of $a_t$.
    \end{lem}

   Due to this lemma, we replace $y_1$ by $u_\s y_1$ for some $\s\in B_1^U$ and $E$ by a subset $\bar E$ with $\#\bar E\ge\#E/4$ (which we continue to denote by $E$) to ensure that 
   \begin{equation}\label{eqn: lower bound on expanding direction}
       E\subset  \{w\in B_\mathfrak{r}(0,1):|w^+|\ge10^{-3}\|w\|\}.
   \end{equation}
    Note that \eqref{eqn: E has dimension} holds for the new $E$ with $4\hat C$, but we will suppress the factor 4.

    \textbf{Estimates on the size of elements}

    Let $t=|\log(b_1)|$. By \eqref{eqn: calcu}, since $a_tu_\s\in H$, we have 
    \begin{equation}\label{eqn: calculation2}
        a_tu_\mathbf{s}h\exp(w)y_1\in Y
    \end{equation}
    and for every $\mathbf{s}\in B_1 ^U$ and $w\in E$, we have, by Equation~\eqref{eqn:Adjointaction},
    \begin{equation}\label{eqn: expanding translate of lie alg}
        \|\Ad(a_tu_\s)w\|\le \frac{1}{b_1}\|\Ad(u_\s)w\|\le\frac{3}{b_1}\|w\|\le \frac{3}{\dim(G)\ref{C:normalform}}\le 1.
    \end{equation}
    Additionally, we note that $(a_tu_
    \s)h(a_tu_\s)^{-1}\in H$.

    Now, thanks to Equation~\eqref{eqn: lower bound on expanding direction}, if $\|\s\|\le 10^{-4}$, then the reverse triangle inequality yields
    $$|(\Ad(u_\s)w)^+|=|r_1+\s\cdot\c+r_2\|\s\|^2/2|\ge|r_1|-|\s\cdot\c+r_2\|\s\|^2/2|\ge (10^{-3}-2\cdot10^{-4})\|w\|\ge 10^{-4}\|w\|.$$     
    In particular, for $10^{-5}\le\|\s\|\le 10^{-4}$, the upper bound for $\|w\|$ above, shows we have that the contracting direction of $\Ad(a_tu_\s)w$ satisfies $\le e^{-2t}10^4 \|(\Ad(a_tu_\s)w)^+\|$ and the neutral satisfies $\le 2\cdot10^4e^{-t}\|(\Ad(a_tu_\s)w)^+\|$.

    In conclusion, we have  for $\|\s\|\le 10^{-4}$, 
    $$a_tu_\s h\exp(w).y_1  = (a_tu_
    \s)h(a_tu_\s)^{-1}\cdot g\cdot \exp(Z((\Ad(a_tu_\s)w)^+,\mathbf{0},0).a_tu_\s y_1
    $$
    where, due to the Baker--Campbell--Hausdorff formula and Equation~\eqref{eqn: expanding translate of lie alg}, $g\in G$ satisfies
    \begin{equation}
        \|g-I\|\le 4\cdot 10^4 b_1
    \end{equation}
    with an absolute implied constant.  Indeed, $[(\Ad(a_tu_\s)w),-(\Ad(a_tu_\s)w)^+]=[(\Ad(a_tu_\s)w)-(\Ad(a_tu_\s)w)^+,-(\Ad(a_tu_\s)w)^+]$ and we have shown a few paragraphs before that the contracting and neutral directions of $(\Ad(a_tu_\s)w)$ have size less than $2\cdot10^4 b_1$.

    Hence, by Equation \ref{eqn:changingballs}, we  have $g\in B_{\ref{C:changeinballs}4\cdot10^4b_1} ^G$.

    As such
    \begin{equation}\label{eqn:expanding close to Y}
        \exp(Z((\Ad(a_tu_\s)w)^+,\mathbf{0},0).a_tu_\s y_1\in B^G _{Cb_1} Y
    \end{equation}
    where $C=\ref{C:changeinballs}4\cdot10^4$ is an absolute constant.

     We now choose a particular $s$ to define the set $I$. Recall that by our choice of $t=|\log(b_1)|$ and that 
     $$b_1\le         (\#F)^{-\varepsilon}\le \frac{e^{-\ref{C:nondiv}}}{10^5\ref{C:changeinballs}}\eta^{3}.
$$
In particular, $t$ is sufficiently large that we may apply Proposition \ref{prop:nondiv} to the point $y_1$ and $B=\{\s\in B_1^U:10^{-5}\le\|s\|\le10^{-4}\}$, $\eta= \min\{|B|,\alpha^2\}$, and $\alpha\le.1\ref{C:nondiv}^{-1}$ to deduce that
$$B'':=\{\s\in J: a_tu_\s y_1 \in X_\eta\}$$
has $|B''|>.9|B|$.

Now apply Theorem \ref{thm:ProjSO(n,1)} to $E\subseteq B_\mathfrak r (0,b/b_1)$ (see Equation \eqref{eqn: recentered fiber} and \eqref{eqn: lower bound on expanding direction}), $B=\{\s\in B_1^U:10^{-5}\le\|s\|\le10^{-4}\}$, coarse dimension $\delta-(3+n)\varepsilon$ and $\varepsilon\le\frac{\delta}{2(3+n)}$ to get some $B'\subset B_1 ^U$ with $|B'|\ge .99|B|$. 

Fix some $\s\in B'\cap B''$. Put $y_2:=a_tu_\s y_1$. By definition of $B''$ we have $y_2\in X_\eta$ and, by Equation \ref{eqn:expanding close to Y}, we have
$$\exp(e^t Z((\Ad(u_\s)w)^+,\mathbf 0,0))y_2\in B^G _{Cb_1}Y.$$

Define $I:=\{e^t\xi_\s(w):w\in E_{\s} \}$. We claim the proposition holds with $y_2$, $I$, and $b_1$.

By construction we have the claimed bound from Equation \ref{eqn: size of b1} on $b_1$ and item (2) follows from Equation \ref{eqn:expanding close to Y}. As such, it only remains to prove item (1). 

Let $\mathfrak{b} \ge e^t(\#F)^{-1}$ and $b_0=(\#F)^{-1}$.

Let $\rho$ be defined as
$$\rho(A)=\frac{\#\{w\in E_{\s}:e^t\xi_\s(w)\in A\}}{\# E_{\s}}$$
for $A$ an interval.
That is, $\rho$ is the pushforward of the normalized counting measure on $E_\s$ under the map $w\mapsto e^t\xi_\s(w)$.

Let $w\in E_\s$ and $r=e^t\xi_\s(w)$. By Theorem \ref{thm:ProjSO(n,1)} and the fact that for every $\mathfrak{b}\ge e^t(\#F)^{-1}$ we have
$\#E_\s\ge .9(\#E)$, we conclude
\begin{align}\label{eqn:measure on remaining horosphere}
    \rho(\{r'\in I:|r-r'|\le \mathfrak{b}\})&=\frac{\#(\{w'\in E_\s:|\xi_\s(w')-\xi_\s(w)|\le e^{-t}\mathfrak{b}\}}{\#E_\s}\\\nonumber&\le C_\varepsilon(b_0/b_1)^{-\varepsilon}(e^{-t}\mathfrak{b}/b_1)^{\delta - (3+n)\varepsilon}\\\nonumber&\le C_\varepsilon (b_0/b_1)^{-\varepsilon}\mathfrak{b}^{\delta - (3+n)\varepsilon}
\end{align}
where $C_\varepsilon$ depends on $\hat C$ from Equation \ref{eqn: E has dimension}, $\varepsilon$, and $n.$

This estimate finishes item (1) and thus concludes the proof.    
\end{proof}

\begin{proof}[Proof of Lemma \ref{lem:expanding part not too small}]
    Throughout we assume $w$ has coordinates $(r_1,\c,r_2)$.

If $\{w\in E: w^+\ge 10^{-3}\|w\|\}$ has more than $\#E/4$ elements, then done with $\s=\mathbf 0$. Else we assume that $\#\{w\in E: w^+\ge 10^{-3}\|w\|\}<\#E/4$ and so $.75\#E\le \#\hat E$ where $\hat E:=\{w\in E: w^+\le 10^{-3}\|w\| \}$.

Case 1: $\#\{w\in \hat E: \|\c\|\ge.1\|w\|\}\ge\#E/4$.
Then, there is at least one coordinate in $\c$, say $c_i$, with $|c_i|\ge.1\|w\|$. Let $\s$ be the zero vector except in the $i$th entry. Then
\begin{align*}
    .1|s_i|\|w\|&\le |c_is_i|=\|\c\cdot\s\|\\
    &\le |r_1+\c\cdot\s+r_2\|s\|^2/2|+|r_1|+|r_2|\|\s\|^2/2\\
    &\le |r_1+\c\cdot\s+r_2\|s\|^2/2|+\|w\|10^{-3}+\|w\||s_i|^2/2
\end{align*}
and so for $s_i=.1$, we get $$.004\|w\|\le|r_1+\c\cdot\s+r_2\|s\|^2/2|.$$
Indeed, $(.1|s_i|-10^{-3}-|s_i|^2/2|)\|w\|\le |r_1+\c\cdot\s+r_2\|s\|^2/2|$

Case 2: If $\#\{w\in \hat E: \|\c\|\ge.1\|w\|\}<\#E/4$, then $.5\#E<\{w\in\hat E:\|\c\|\le .1\|w\|\}$. In this case we have $\|w\|=|r_2|$ and using a similar argument as in case 1 with $\s$ zero everywhere except in a coordinate where $\|c\|$ is attained in which case $s_i$ is $.75$ gives the desired conclusion.
\end{proof}


\section{Proof of the Main results}\label{section: proofs}
We prove Theorem~\ref{Thm:EffDen} and Theorem~\ref{thm:count}.

\begin{proof}[Proof of Theorem \ref{Thm:EffDen}]
    Let $x\in X_{\varrho}$ for some $\varrho\in (0,.1\eta_X)$.  Choose $f_{\varrho,x}$ supported on $B^G _{.1\varrho}x$ with $\int f_{\varrho,x} \,dm_X=1$ and with $\mathcal S(f_{\varrho,x})\le \varrho^{-N}$ for some absolute $N$. 
    
    Recall, by Proposition \ref{prop:prop2}, there exists some $y_2\in Y\cap X_{2\rho}$,  $b_1>0$ satisfying Equation \eqref{eqn: size of b1}, and an interval $I\subset [0,1]$ so that $I$ supports a probability measure $\rho$ which satisfies
        $$\rho(J)\le C_\theta (b_0/b_1)^{-\theta}\cdot|J|^{\delta - (3+n)\theta}$$
        for all intervals $J$ with $|J|\ge\frac{b_0}{b_1}$ where $b_0=\#F^{-1}$ where $C_\theta$ depends on absolute constants, $\theta$, and $n$,  and $\theta\le\frac{\delta}{2(3+n)}$. 
     Moreover, we have $v_ry_2\in B_{Cb_1} ^GY$ for all $r\in I$ and absolute $C>0.$ Note we have replaced some of the notation from Proposition \ref{prop:prop2} in anticipation of using Theorem \ref{thm:VenkateshSO(n,1)}.

     Let $b=\frac{b_0}{b_1}$ and $t=|\log(b)|/4$. Let $\theta=\frac{\min\{\delta, \varepsilon_0\}}{\ell(3+n)}$ where $\varepsilon_0$ is from Theorem  \ref{thm:VenkateshSO(n,1)} and $\ell\ge5$ is chosen so that $\frac{\ref{K:Venk}}{2^{\star}}\le\theta\le\frac{\ref{K:Venk}}{2^{\star-1}}$. Choose $\delta = 1-\varepsilon_0/4$. By Theorem \ref{thm:VenkateshSO(n,1)}, we have
    \begin{equation}\label{eqn:using venkatesh}
       \left| \int_{\s\in B_1^U}\int_{r\in[0,1]}f_{\varrho,x}(a_{t}u_\s v_r y_2)\,d\rho(r)\,d(\s)-1\right|\ll \left( C_\theta (b_0/b_1)^{-\theta}\right)^{1/2}\mathcal S(f_{\varrho,x})e^{-\ref{K:Venk} t}\varrho^{-L/2}
    \end{equation}    
    for an absolute $L$.
    
    Choose $\varrho = e^{-\ref{K:Venk} t/2N}$ (and so $\varrho = b^{\ref{K:Venk}/8N}$) to get the right-hand side of Equation \eqref{eqn:using venkatesh} 
    $$\le C_\theta^{1/2} b^{-\theta/2}e^{-\ref{K:Venk} t/2}\varrho^{-L/2}= C_\theta^{1/2} b^{-\theta/2}b^{\ref{K:Venk} /8}\varrho^{-L/2}\ll_{\theta,\varrho} b^{\ref{K:Venk}/8}$$   
     where we have absorbed the dependence on $\varrho$ and $C_\theta$ into the implicit constant and used $\frac{\ref{K:Venk}}{2^{\ell}}\le\theta\implies 0\le\ref{K:Venk}/8-\theta/2\le \ref{K:Venk}/8$. In particular, for periodic orbits $Y$ with sufficiently large volume, the right hand side of the previous equation is $<1/2$.  Thus $a_{t}u_\s v_r y_1\in \text{supp}(f)$ for some $r\in I$ and $\s\in B_1^U$. In particular, $a_{t}u_\s v_r y_2\in B_{.1\varrho} ^G x$ for some $r\in I$ and $\s\in B_1^U$. 

    Let $\ref{K:den}=\ref{K:Venk}/8N$. Then the above implies 
    \begin{equation}\label{eqn:Venkatesh gives close to point}
        d_X(x,a_tu_\s v_ry_2)\ll b^{\ref{K:den}}
    \end{equation}
    for all $x\in X_{b^{\ref{K:den}}}$.

    By Proposition \ref{prop:prop2}, we have $v_ry_2\in B_{Cb_1} ^GY$ for all $r\in I$ and so 
    $$d_X(v_ry_2,Y)\le d_G(e,g)\ll \|g-I\|\ll b$$
    for all $r\in I$. For each $\s\in B_1 ^U$, $\|u_\s gu_\s ^{-1}-I\|\ll b$ and so $d_X(u_\s v_ry_2,Y)\ll b.$

     Note that if $\|g'-I\|\ll b$, then $\|a_tg'a_{-t}-I\|\ll bb^{-1/2}=b^{1/2}$  since $e^t=b^{-1/4}.$ As such,    \begin{equation}\label{eqn: cor of prop 2}
     d_X(a_tu_\s v_ry_2,Y)\ll b^{1/2}   
    \end{equation}

    Then Equation \eqref{eqn:Venkatesh gives close to point} and \eqref{eqn: cor of prop 2} imply
    $$d_X(x,Y)\ll b^{\ref{K:den}} $$
    for all $x\in X_{b^{\ref{K:den}}}$. 

    Thus,  the fact that $\#F\gg\vol(Y)$ imply that 
    $$d_X(x,Y)\ll \vol(Y)^{-\ref{K:den}} $$ for all $x\in X_{\vol(Y)^{-\ref{K:den}}}$ and where the implicit constant depends only on $\dim(G)$ and $\ref{K:den}$.

    As remarked after Theorem \ref{Thm:EffDen}, if $\Gamma$ is a congruence subgroup, then $\ref{K:den} = \ref{K:den}(\kappa_X)$ is absolute.   
\end{proof}

\begin{proof}[Proof of Theorem \ref{thm:count}]
Any geodesic hyperplane in $M$ lifts to a periodic orbit of $H=\SO(n-1,1)$ in the frame bundle $X=G/\Gamma$ and so we work with the lifts. Our strategy is to use the geometry of manifolds that come from the gluing construction to obtain an upper bound on the volumes of lifts of totally geodesic hyperplanes and invoke Theorem 5 of \cite{SS22} (quantitative bound on bounded volume orbits) to count the number of lifts. As such, we will often impose a lower bound on the volume of lifts with which we are working with since those that are already bounded above can be controlled.

By the thick-thin decomposition, we have that $X-X_{\eta_X}$ is a disjoint union of finitely many cusps. Let $M_0$ denote the image of $X_{\eta_X}$ in $M$ so that $M-M_0$ decomposes into a disjoint union of finitely many (possibly none) cusps.

We will work with periodic orbits $Hx$ of sufficiently large volume so that 
$$\ref{C:main}\vol(Hx)^{-\ref{K:den}}<\eta_X.$$
This implies that points in the compact set $X_{\eta_X}\subset X_{\vol(Hx)^{-\ref{K:den}}}$ satisfy effective density (Theorem \ref{Thm:EffDen}). Indeed, $\ref{C:main}>1$. 

Let $r>0$ be such that for $i=1,2$, we have some $x_i \in X_{\eta_X}\subset X_{\vol(Hx)^{-\ref{K:den}}}$ so that $B_{r} ^G.x_i$ projects to the interior of $N_i\cap M_0$. In lieu of Theorem 4 (Quantitative Isolation of closed orbits) of \cite{SS22},  there exists some constant $D = D(\dim(G))>0$ such that the radius satisfies $r\gg \vol(\Sigma)^{-2D}$. Theorem 4 needs a compact set and this is another reason why we needed to restrict to $X_{\eta_X}$. Moreover, in the cusp ($X_{\eta_X}^c$), the radius would have to be much smaller.

We now consider periodic orbits $Hx$ of sufficiently large volume so that 
\begin{equation}\label{eqn: count bound1}
    \ref{C:main}\vol(Hx)^{-\ref{K:den}}\le .5\min\{r,\eta_X\}
\end{equation}
Consequently, Theorem \ref{Thm:EffDen} yields that $Hx\cap B_r ^G.x_i\ne \emptyset$ for $i=1,2$ and so the corresponding plane crosses $\Sigma.$ By angular rigidity of \cite{FLMS} (Theorem 4.1), the corresponding plane crosses orthogonally. Denote this plane by $S$.

On the other hand, the since the hyperplane $S$ is crossing orthogonally, then it cannot be too dense. Indeed, Proposition 5.1 of \cite{FLMS} constructs an open set $O$ of the unit tangent bundle such that the projection into the 1-neighborhood of $M_0$ and does not intersect. This is just constructing an open set of angles away from the normal vector of the plane $S$. 

Let $\rho>0$ and $x\in X$ be so that $B_{\rho} ^G.x$ projects into $O$. We note that by Quantitative Isolation of closed orbits of \cite{SS22} once more, we have $\rho\gg\vol(\Sigma)^{-2D}.$ Lastly, by the previous paragraph, we have $Hx\cap B_\rho ^G.x=\emptyset$. Thus, by effective density applied to $Hx$, we must have $$\ref{C:main}\vol(Hx)^{-\ref{K:den}}> .5\rho.$$

Combining this with Equation~\eqref{eqn: count bound1}, we get
$$\vol(Hx)\le \left(\frac{2 \ref{C:main}}{\min\{\eta_X,r,\rho\}}\right)^{{1/\ref{K:den}}}.$$

As such, we can invoke Theorem 5 of \cite{SS22} (quantitative bound on bounded volume orbits) to bound the number of lifts to deduce
$$\#\left\{Y:Y \text{ is a closed } S \text{-orbit and }\vol(Y)\le\left(\frac{2 \ref{C:main}}{\min\{\eta_X,r,\rho\}}\right)^{{1/\ref{K:den}}}\right\}\ll \left(\frac{2 \ref{C:main}}{\min\{\eta_X,r,\rho\}}\right)^{{D'/\ref{K:den}}}$$

where the implicit constant is absolute and $D'$ depends on the dimension of $G$.

By recalling that $\ref{C:main}$ depends on $\kappa_X, \vol(X),$ and the minimum injectivity radius of points in $X_{\eta_X}$ and that $\ref{K:den} =O(\kappa_X ^2)$ depends on $\kappa_X$, and that $r,\rho\gg\vol(\Sigma)^{-D}$ we get the desired conclusion.
\end{proof}

\begin{remark}\label{rmk:extending}
 We remark that angular rigidity holds for a slightly larger class of non-arithmetic hyperbolic manifolds \cite{FLMS}. Briefly, the Gromov--Piatestski-Shapiro hybrid manifolds are built from two subarithmetic non-commensurable pieces and one can generalize this to more pieces in such a way that one obtains a non-arthmetic manifold. 
 
 Thus, the conclusion of Theorem \ref{thm:count} holds for this class of hyperbolic manifolds. Indeed, by using the terminology of \cite{FLMS}, once we can guarantee that a hypersurface passes between two adjacent building blocks that are arithmetic and dissimilar, then angular rigidity applies and one can finish the argument exactly as in Theorem \ref{thm:count}. Examples include those by Raimbault \cite{10.1093/imrn/rns151} and Gelander--Levit \cite{Commclasses}.
\end{remark}

\section{Appendix: Effective equidistribution over measures of large dimension}\label{section: Appendix A}
In the appendix we prove a slightly more general version of Theorem \ref{thm:VenkateshSO(n,1)} so that it can be utilized outside of this paper. That is, we show that, in the context of simple Lie groups, we have equidistribution in the presence of measures on subgroups that behave horospherically and that are almost full. 
\begin{thm}\label{thm:Appendix}
    Let $G$ be a simple Lie group with finite center, $\Gamma$ a lattice of $G$, and $X=G/\Gamma$. Let $X_\eta=\{x\in X:\inj(x)\ge\eta\}$. Suppose
    $\mathbb R^d \simeq N\subset G$ is a unipotent subgroup equipped with the Euclidean inner product and we have a subgroup $U \simeq \mathbb R^k$ for $k\ge1$. Set $V=U^\perp$ and let $A=\{a_t:t\in \mathbb R\}$ be a 1-parameter subgroup of $G$ satisfying $$a_tn_\mathbf{z}a_{-t}=n_{e^t\mathbf{z}}.$$ 
    Let $[0,1]^d$ denote $\{u_\mathbf{s}v_{\mathbf {r}}\in N:(\mathbf{s},\mathbf{r})\in[0,1]^k\times[0,1]^{d-k}\}$.
    Assume the following holds
    \begin{itemize}
        \item (Effective equidistribution of the unstable foliation) There exists $\constK\label{K:effec}=\ref{K:effec}(\Gamma,\kappa_X)$ and $L>0$ such that the following holds. Let $0<\eta<1$, $t>0$, and $x\in X_\eta$. Then for every $f\in C_c^\infty(X)+\mathbb C \cdot1$,
        $$\left|\int_{[0,1]^d}f(a_tn_\mathbf{z}x)dm_N(\mathbf{z})-\int_X f(x)dm_X(x)\right|\ll \mathcal S(f)e^{-\ref{K:effec} t}\eta^{-L}$$
        where $m_N$ is the Haar measure on $N$ normalized so that $[0,1]^d$ has measure 1, and the implicit constants are all absolute.
        \item Let $\rho$ be a probability measure on $[0,1]^{d-k}\subseteq \mathbb R^{d-k}\simeq V$ satisfying 
        $$\rho(B(\mathbf{r},b))\le Cb^{\delta}$$
        for some $b>0$.
    \end{itemize}
    Then, there exists $\kappa=\kappa(\kappa_X)$ and $\varepsilon=\varepsilon(\kappa_X)$ such that if $\delta>d-k-\varepsilon$, then for $|\log(b)/4|\le t\le |\log(b)/2|$ we have
    $$\left|\int_{[0,1]^{d-k}}\int_{[0,1]^k}f(a_t u_{\mathbf{s}}v_{\mathbf{r}}x)d\mathbf{s}d\rho(\mathbf{r})-\int_Xf(x)dm_X(x)\right|\ll C^{1/2}S(f)e^{-\kappa t}\eta^{-L/2}.$$ Here $\kappa_X$ comes from the decay of correlations in $X$, see Proposition \ref{Cor:Decay of Cor}. 
\end{thm}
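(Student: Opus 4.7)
The plan is a Cauchy--Schwarz argument combined with the dimension bound on $\rho$ and the hypothesized effective equidistribution of the full unstable $N$-orbit. Centering so that $\int f\,dm_X=0$, write $I:=\int\!\!\int f(a_tu_\s v_\r x)\,d\s\,d\rho(\r)$. By Cauchy--Schwarz in $\s\in[0,1]^k$ (unit volume),
\[
|I|^2\le \int_{[0,1]^k}\!\!\int\!\!\int f(a_tu_\s v_\r x)\overline{f(a_tu_\s v_{\r'} x)}\,d\rho(\r)d\rho(\r')\,d\s.
\]
Abelianness of $N\simeq\mathbb R^d$ together with $a_tv_\r a_{-t}=v_{e^t\r}$ gives $a_tu_\s v_\r x=v_{e^t(\r-\r')}\cdot a_tu_\s v_{\r'}x$, so the integrand is $F_w(a_tu_\s v_{\r'}x)$ with $w:=e^t(\r-\r')$ and $F_w(y):=f(v_wy)\overline{f(y)}$, whose Sobolev norm is controlled by $S(f)^2$ up to a polynomial factor in $\|w\|$.

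Next I split the $(\r,\r')$ integration into the near-diagonal region $\|\r-\r'\|\le b$ and its complement. The dimension hypothesis $\rho(B(\r,b))\le Cb^\delta$ implies the near-diagonal set has $(\rho\otimes\rho)$-measure $\le Cb^\delta$, contributing $\ll C\|f\|_\infty^2 b^\delta$ to $|I|^2$. For the off-diagonal region $\|w\|\ge e^tb$, we estimate
\[
\int_{[0,1]^k} F_w(a_tu_\s v_{\r'}x)\,d\s = \int F_w\,dm_X + \mathrm{error}.
\]
The mean $\int F_w\,dm_X=\langle v_w f,f\rangle_{L^2(X)}$ with $\int f\,dm_X=0$ is bounded via effective decay of matrix coefficients for the horospherical flow $v_w$, a consequence of the spectral gap encoded in $\kappa_X$, giving $\ll S(f)^2\|w\|^{-\kappa'}$ for some $\kappa'=\kappa'(\kappa_X)>0$.

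The remaining piece requires effective equidistribution of the $U$-orbit $\{a_tu_\s v_{\r'}x:\s\in[0,1]^k\}$, while the hypothesis supplies only equidistribution along the full $N$-orbit. To bridge this gap, I thicken in the $V$-direction by a small ball $V_\beta\subset V$ of radius $\beta$:
\[
\int_{[0,1]^k}F_w(a_tu_\s v_{\r'}x)\,d\s \approx |V_\beta|^{-1}\!\!\int_{V_\beta}\!\int_{[0,1]^k} F_w(a_tu_\s v_{\r'+\sigma}x)\,d\s\,d\sigma,
\]
with approximation error $\ll S(f)^2 e^t\beta$, since $a_tu_\s v_\sigma=v_{e^t\sigma}a_tu_\s$ and $F_w$ is Lipschitz. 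The thickened integral is an integral over a non-unit box $[0,1]^k\times V_\beta$ in $N$, which after a rescaling by $a_{\log\beta}$ and covering by unit $N$-translates reduces to the hypothesized effective equidistribution applied at the shifted time $t+\log\beta$ based at $v_{\r'}x$ (whose injectivity radius is $\gtrsim\eta$ by log-continuity), producing an error $\ll \beta^{-(d-k)}S(f)^2 e^{-\kappa_1(t+\log\beta)}\eta^{-L}$ with $\kappa_1=\kappa_1(\kappa_X)$.

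Finally, I optimize the parameters by balancing the four error sources ($Cb^\delta$, the mean-value bound $(e^tb)^{-\kappa'}$, the thickening error $e^t\beta$, and the rescaled equidistribution error $\beta^{-(d-k)}e^{-\kappa_1(t+\log\beta)}$) against the constraint $t\asymp|\log b|/2$. A direct calculation then yields the decay rate $e^{-\kappa t}$ with $\kappa\asymp\kappa_X^2$, provided $\delta>d-k-\varepsilon_0$ with $\varepsilon_0\asymp\kappa_X^2$. The main technical obstacle is the thickening step: $a_t$ expands $V$ at rate $e^t$, forcing $\beta$ to be very small to control the thickening error, while $\beta^{-(d-k)}$ simultaneously appears as a penalty in the rescaled equidistribution error. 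This tension is exactly what constrains the admissible range of $\delta$ and forces the quadratic dependence of both $\kappa$ and $\varepsilon_0$ on the spectral gap $\kappa_X$.
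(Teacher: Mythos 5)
Your overall architecture (Cauchy--Schwarz, near-diagonal measure bound from the dimension hypothesis, mixing for the mean of the correlation functions) is in the right spirit, but there is a genuine gap at the bridging step, and it is the crux of the whole theorem. After your Cauchy--Schwarz in $\s$ you fix $\r'$ and need
$\int_{[0,1]^k}F_w(a_tu_\s v_{\r'}x)\,d\s\approx\int F_w\,dm_X$,
i.e.\ effective equidistribution of an expanding translate of a \emph{$k$-dimensional slice} of the $d$-dimensional horospherical group. The hypothesis only supplies equidistribution of the full $N$-leaf, and the pointwise slice statement is false in general (the slice can lie on an intermediate invariant submanifold); this is exactly the difficulty the theorem is designed to circumvent. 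Your proposed fix --- thicken in $V$ by a ball of radius $\beta$ and rescale by $a_{\log\beta}$ --- cannot work quantitatively: the Lipschitz thickening error is $\asymp S(f)^2e^{t}\beta$ because $a_t$ expands $V$ at rate $e^t$, forcing $\beta\ll e^{-t}$; but the rescaling converts the time parameter to $t+\log\beta=t-|\log\beta|\le 0$, so the renormalized translate has undergone no expansion and the hypothesis yields nothing. There is no choice of $\beta$ that makes both errors decay. (A secondary, fixable issue: you split the diagonal at scale $b$, but mixing for $v_w$ with $\|w\|=e^t\|\r-\r'\|$ requires $\|\r-\r'\|\gg e^{-t}\geq b^{1/2}>b$, leaving an uncovered intermediate range; this is repaired by splitting at scale $e^{-t}$ instead, at the cost of $Ce^{-\delta t}$ in the near-diagonal term.)

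The paper's proof avoids slice equidistribution entirely, and the order of operations is the essential difference. It first uses the hypothesis $\delta>d-k-\varepsilon$ to replace $d\s\,d\rho(\r)$ by an absolutely continuous measure $\varphi\,dm_N$ on the \emph{full} $N$-leaf, where $\varphi$ is a sum of indicators of $b$-thickened boxes with density bounded pointwise by $C4^{d-k}b^{-\varepsilon}$ (the thickening is done at the fixed scale $b$ of the dimension hypothesis, so the Lipschitz error is $\mathcal S(f)e^tb\le\mathcal S(f)b^{1/2}$ using $t\le|\log b|/2$ --- this is the only place the dimension of $\rho$ enters). Only then does it apply Cauchy--Schwarz, and over a tiny auxiliary interval $[0,\tau]$ in a single $U$-direction with $\tau=e^{(1/l-2)t}$, not over all of $[0,1]^k$; the density bound $\varphi\le Cb^{-\varepsilon}$ lets it dominate the resulting quadratic expression by a genuine Haar integral over $N$, to which the full-leaf equidistribution hypothesis applies, with the main term controlled by decay of correlations. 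If you want to salvage your version, the fix is the same in substance: keep the $\rho(\r')$-integration inside the Cauchy--Schwarz and use the dimension hypothesis to bound $d\s\,d\rho(\r')$ by $Cb^{-\varepsilon}\,dm_N$ after a $b$-scale smoothing, rather than fixing $\r'$ and attempting to equidistribute the slice.
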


Notice that the theorem needs effective equidistribution with respect to the Haar measure on $N$ as an assumption. When $N$ is the horospherical subgroup of a 1-parameter diagonalizable element, then effective equidistribution of expanding unipotents with respect to the Haar measure on the unstable foliation is known by \cite{MR1359098} Proposition 2.4.8. See also \cite{MR4277274} for effective equidistribution of expanding translates in more general homogeneous spaces. Also related are the results of \cite{MR3556973} and \cite{MR4372213}  where they prove effective equidistirbution results of expanding curves contained in $G$ and products of $G$. A version of Theorem \ref{thm:Appendix} in the setting of Teichm\"{u}ller dynamics can be found in \cite{S23}.

Thus, Theorem \ref{thm:VenkateshSO(n,1)} is a special case of Theorem \ref{thm:Appendix}.

\begin{proof}[Proof of Theorem \ref{thm:VenkateshSO(n,1)}]
Apply Theorem \ref{thm:Appendix} to $G=\SO^\circ(n,1)$ and $N$ the horospherical subgroup of $a_t = \text{diag}(e^{t},\ldots,e^{-t})$ and note that the existence of the required probability measure follows from Proposition \ref{prop:prop2} and $[0,1]^{n-2}\subset  [-1,1]^{n-2}\simeq B_1^U$.
\end{proof}

Before beginning the proof of Theorem \ref{thm:Appendix}  we record a result that we need on the decay of correlations.

\begin{prop}\label{Cor:Decay of Cor}(Corollary 2.4.4 of \cite{MR1359098})
    Let $d$ be a fixed right $G$-invariant metric on $G$. There exists a positive constant $\kappa_X$ on $X=G/\Gamma$ so that
    $$\left|\int \varphi(gx)\psi(x)\,dm_X-\int_X\varphi\,dm_X \int_X\psi\,dm_X\right|\ll e^{-\kappa_Xd (e,g)}\mathcal S(\varphi)\mathcal S(\psi)$$
    for every $f\in C_c ^\infty(X)+\mathbb C \cdot1$.
\end{prop}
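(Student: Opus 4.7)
The plan is to reduce the decay-of-correlations estimate to a matrix coefficient bound for the left regular representation of $G$ on $L^2_0(X) := \{f \in L^2(X) : \int f\, dm_X = 0\}$, and then to invoke quantitative mixing for this representation. Concretely, set $\varphi_0 = \varphi - \int\varphi\,dm_X$ and $\psi_0 = \psi - \int\psi\,dm_X$; note that $\mathcal S(\varphi_0) \ll \mathcal S(\varphi)$ and similarly for $\psi_0$, since $\mathcal S$ controls the $L^\infty$ norm. The integrand on the left-hand side is then
\begin{equation*}
\int \varphi(gx)\psi(x)\,dm_X(x) - \int\varphi\,dm_X\int\psi\,dm_X = \langle \pi(g)\varphi_0,\overline{\psi_0}\rangle_{L^2(X)},
\end{equation*}
where $\pi$ is the left regular representation restricted to $L^2_0(X)$.

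The next step is to invoke the quantitative mixing estimate for matrix coefficients of $\pi$. Since $G = \SO^\circ(n,1)$ is simple of real rank one with finite center, and $L^2_0(X)$ contains no $G$-invariant vectors (as $\Gamma$ is a lattice and $X$ has finite volume), the representation $\pi$ is tempered away from a finite-dimensional complementary series contribution, and by the work of Cowling--Howe--Moore and Harish-Chandra together with an explicit spectral gap for $\Gamma$ (see Kleinbock--Margulis \cite{MR1359098}, \S2.4), one obtains for $K$-finite vectors $v, w$ and $g \in G$ a bound of the form
\begin{equation*}
|\langle \pi(g)v, w\rangle| \ll (\dim Kv)^{1/2}(\dim Kw)^{1/2}\,\Xi(g)^{2s_0}\,\|v\|_2\|w\|_2,
\end{equation*}
where $\Xi$ is the Harish-Chandra function and $s_0 \in (0,1)$ quantifies the spectral gap for $L^2_0(X)$. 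Since $\Xi(a_t) \ll (1+t)e^{-t(n-1)/2}$ on the positive Weyl chamber and $d(e,g) \asymp t$ under Cartan decomposition, this yields exponential decay $e^{-\kappa_X d(e,g)}$ with some $\kappa_X > 0$ depending on the spectral gap of $\Gamma$.

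To pass from $K$-finite to smooth vectors and replace $L^2$ norms and $K$-type dimensions by the Sobolev norm $\mathcal S$, I would smooth $\varphi_0, \psi_0$ by a right-$K$-convolution, expand in $K$-isotypic components, and absorb the resulting polynomial growth in the $K$-type using sufficiently many derivatives encoded in $\mathcal S$. This is the standard Sobolev-embedding argument that turns a matrix-coefficient estimate on $K$-finite vectors into one on smooth vectors with the Sobolev norm. The main obstacle is making the spectral gap $\kappa_X$ explicit in terms of the data: in the arithmetic/congruence setting Selberg-type bounds give uniform $\kappa_X$, but for a general non-arithmetic $\Gamma$ one only obtains existence of the gap from the lattice structure together with the fact that the trivial representation is isolated in the unitary dual restricted to lattices. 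This is precisely the content of \cite[Cor.~2.4.4]{MR1359098}, which we invoke to complete the proof.
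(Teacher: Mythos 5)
The paper gives no argument for this proposition at all: it is quoted directly as Corollary 2.4.4 of Kleinbock--Margulis \cite{MR1359098}, so the ``paper's proof'' is the citation. Your sketch is the standard representation-theoretic argument that lies behind that citation: subtract the means, rewrite the correlation as a matrix coefficient of the regular representation on $L^2_0(X)$, apply uniform decay of matrix coefficients for a rank-one group with spectral gap (Harish-Chandra $\Xi$-function bounds \`a la Cowling--Howe--Moore), and upgrade from $K$-finite vectors to smooth vectors with Sobolev norms. In outline this is correct and is essentially how the cited result is proved, so your route and the paper's agree, except that the paper outsources the whole argument to \cite{MR1359098} while you reconstruct it.

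Two imprecisions are worth correcting if your paragraph is to stand as a proof rather than a gloss on the citation. First, for $G=\SO^\circ(n,1)$ the non-tempered part of $L^2_0(X)$ is not a \emph{finite-dimensional} subrepresentation: it consists of finitely many (infinite-dimensional) complementary-series constituents, corresponding to the finitely many exceptional Laplacian eigenvalues below $(n-1)^2/4$ of the finite-volume quotient. Second, ``the trivial representation is isolated in the unitary dual restricted to lattices'' is not an available general principle here: $\SO(n,1)$ fails property (T), and the spectral gap for an arbitrary, possibly non-arithmetic, lattice is precisely the finiteness of that exceptional discrete spectrum (Lax--Phillips type finiteness); this is where the real content sits and is why $\kappa_X$ depends on $X$ rather than being absolute. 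Finally, your last sentence invokes Corollary 2.4.4 of \cite{MR1359098}, i.e.\ the statement being proved, which makes the argument circular as written -- harmless in context, since the paper does exactly the same, but then the preceding representation-theoretic discussion is motivation rather than proof.
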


The Sobolev norm is assumed to dominate $\|\cdot\|_\infty$ and the Lipschitz norm $\|\cdot\|_\textrm{Lip}$. Additionally, $\mathcal S(f\circ g)\ll\|g\|^\star \mathcal S(f)$ where the implied constants are absolute.
\begin{proof}[Proof of Theorem \ref{thm:Appendix}]
 By choosing $N\in\mathbb N$ so that $\frac{1}{N}\le b<\frac{1}{N-1}$, we suppose that $b=1/N$.   Additionally, we assume $f$ is Lipshitz since we can always approximate by such function. Lastly, we let $\delta>d-k-\varepsilon$ for $\varepsilon$ that we will optimize later.

 For a vector $\mathbf{k}\in\{0,\ldots, N-1\}^d$, define the $b$-ball $I_\mathbf{k}$ set of directions one can move in $[0,1]^{d-k}\subset \mathbb R^{d-k}\simeq V $ by
$$I_\mathbf{k}=\{v_{\mathbf{r}}:\mathbf{r}=(r_j),r_j\in [k_jb,(k_j+1)b)^{d-k}, \text{ for each }j\}.$$
Define $c_\mathbf{k}=\rho(I_\mathbf{k})$. Notice that since the $b$-balls $(I_\mathbf{k})_{\mathbf{k} \in\{0,\ldots, N-1\}^d}$ are disjoint and their union is $[0,1]^{d-k}$  that $\sum_{\mathbf{k} \in\{0,\ldots, N-1\}^d}c_\mathbf{k}=1$. 

For each $\mathbf{k} \in\{0,\ldots, N-1\}^d$, let
$$B_\mathbf{k} =\left\{u_{\mathbf{s}}v_{\mathbf{r}}:\mathbf{s}\in [0,1]^k,\mathbf{r}=(r_j),r_j\in\left(k_jb,k_jb+\frac{b}{4}\right)\text{ for each }j=1,\ldots, d-k \right\}$$
denote a thickening to the full unstable leaf.
The sets $B_\mathbf{k}$ continue to be disjoint for different indices $\mathbf{k}$. Then,
\begin{align*}
    \int_{[0,1]^{d-k}}\int_{[0,1]^k} f(a_t u_{\mathbf{s}}v_{\mathbf{r}}x)\,d\mathbf{s}\,d\rho(\mathbf{r}) &=\sum_\mathbf{k}\int_{I_\mathbf{k}}\int_{{[0,1]^k}} f(a_t u_{\mathbf{s}}v_{\mathbf{r}}x)\,d\mathbf{s}\,d\rho(\mathbf{r})     
\end{align*}

Thus,
\begin{align*}
    &\left|\int_{[0,1]^{d-k}}\int_{[0,1]^{k}} f(a_t u_{\mathbf{s}}v_{\mathbf{r}}x)\,d\mathbf{s}\,d\rho(\mathbf{r}) - \sum_\mathbf{k}c_{\mathbf{k}}\int_{[0,1]^{k}}f(a_t u_{\mathbf{s}}v_{b\mathbf{k}}x)\,d\mathbf{s}\right|\\
    = &\sum_\mathbf{k}\int_{I_\mathbf{k}}\int_{[0,1]^{k}}\left|f(a_t u_{\mathbf{s}}v_{\mathbf{r}}x) - f(a_t u_{\mathbf{s}}v_{b\mathbf{k}}x)\right|\,d\mathbf{s}\,d\rho(\mathbf{r}).
\end{align*}

We now compare the difference between $f(a_t u_{\mathbf{s}}v_{\mathbf{r}}x)$ and $ f(a_t u_{\mathbf{s}}v_{b\mathbf{k}}x)$ by recalling that we assume $f$ is a Lipschitz function.  We have $\|\mathbf{r}-
b\mathbf{k}\|\le b$ for $\mathbf{r}\in [0,b)^{d-k}$ and $t\le |\log(b)|/2$ by assumption and so 
\begin{align*}
\left|f(a_t u_{\mathbf{s}}v_{\mathbf{r}}x) - f(a_t u_{\mathbf{s}}v_{b\mathbf{k}}x)\right|&\le \mathcal S(f\circ a_t u_{\mathbf{s}})\textrm{d}(v_{\mathbf{r}}x,v_{b\mathbf{k}}x)\\
&\ll \mathcal S(f) e^{t} \|\mathbf{r}-
b\mathbf{k}\|\\
&\le \mathcal S(f)b^{-1/2} b = \mathcal S(f) b^{1/2}.
\end{align*}
Thus, we obtain
\begin{align*}
 &\left|\int_{[0,1]^{d-k}}\int_{[0,1]^{k}} f(a_t u_{\mathbf{s}}v_{\mathbf{r}}x)\,d\mathbf{s}\,d\rho(\mathbf{r}) - \sum_\mathbf{k}c_{\mathbf{k}}\int_{[0,1]^{d}}f(a_t u_{\mathbf{s}}v_{b\mathbf{k}}x)\,d\mathbf{s}\right|\\
\\&\ll \sum_\mathbf{k}\int_{I_\mathbf{k}}\int_{[0,1]^{k}}\mathcal S(f) b^{1/2}\,d\mathbf{s}\,d\rho(\mathbf{r})=\mathcal S (f)b^{1/2}
\end{align*}
and so it suffices to understand the behavior of $f$ on the discrete points $b\mathbf{k} .$
Let 
$$\varphi =  \left(\frac{b}{4}\right)^{-(d-k)}\sum_{\mathbf{k}}  c_\mathbf{k}\mathbb{1}_{B_\mathbf{k}}$$
where the constant in front reflects the reciprocal of the volume of $B_\mathbf{k}$.

Thus, by our dimension bound on $\rho$ and the disjointness of $B_{\mathbf{k}}$, we have the following pointwise bound \begin{align*}
|\varphi|&\le 4^{d-k}b^{-(d-k)}\sum_{\mathbf{k}}c_{\mathbf{k}}\mathbb{1}_{B_\mathbf{k}}<C4^{d-k}b^{-(d-k)}b^{\delta}\mathbb{1}_{\cup_\mathbf{k} B_\mathbf{k}}\\
&<C4^{d-k}b^{-(d-k)}b^{d-k-\varepsilon}\mathbb{1}_{\cup_\mathbf{k} B_\mathbf{k}}\le C4^{d-k} b^{-\varepsilon}
\end{align*}

We have, by noting that $\,d\mathbf{z} = \,d\mathbf{r}\,d\mathbf{s}$,
\begin{align*}
  &\left|\sum_\mathbf{k}c_{\mathbf{k}}\int_{[0,1]^k} f(a_tu_{\mathbf{s}}v_{b\mathbf{k}})\,d\mathbf{s}-\int_{N}\varphi(n(\mathbf{z}))f(a_t n(\mathbf{z}))\,d\mathbf{z}\right|\le\\
  &\sum_\mathbf{k}4^{d-k}b^{k-d}\cdot c_{\mathbf{k}} \int_{B_\mathbf{k}}\left|f(a_tu_{\mathbf{s}}v_{b\mathbf{k}})-f(a_t n(\mathbf{z(\mathbf{s},\mathbf{r})}))\right|\,d\mathbf{r}\,d\mathbf{s}\\
    &\ll \mathcal S (f) b^{1/2}
\end{align*}
where on the last line we bounded the difference of functions by $\mathcal S(f)b^{1/2}$ and we used that $\sum_{\mathbf{k} }c_\mathbf{k}=1$.

Thus, it suffices to study the thickening $\int_{N}\varphi(n(\mathbf{z}))f(a_t n(\mathbf{z}))\,d\mathbf{z}$. Now, we introduce an extra average in single direction of the $U$-foliation. Consider the vector $(s,0,\ldots,0)$ for $s\in [0,\tau]$, where $\tau$ is a small parameter that we will specify later. By abuse of notation, we denote the vector $(s,0,\ldots,0)$ by $s$. Consider,
$$A = \frac{1}{\tau} \int_{[0,\tau]} \int_{N}\varphi(n(\mathbf{z}))f(a_tu_{s} n(\mathbf{z}))\,d\mathbf{z}\,ds.$$

We will eventually show that $\int_{N}\varphi(n(\mathbf{z}))f(a_t n(\mathbf{z}))\,d\mathbf{z}$ is comparable to $A$. To see this, notice that $m_U\left(u_{s}B_\mathbf{k}\triangle B_\mathbf{k}\right)\ll \tau m_U(B_\mathbf{k})$ since, $U\simeq\mathbb R^{k}$ and abelian groups have the Folner property. Hence,
\begin{align*}
    \left|\int_{N}\varphi(n(\mathbf{z}))f(a_tu_{s}n(\mathbf{z})x)\,d\mathbf{z}-\int_{N}\varphi(n(\mathbf{z}))f(a_tn(\mathbf{z})x)\,d\mathbf{z}\right|&\le \sum_{\mathbf{k}}c_{\mathbf{k}} \int_{u_{s}B_{\mathbf{k}}\triangle B_{\mathbf{k}}}|f(a_tn(\mathbf{z})x)|\,d\mathbf{z}\\
    &\ll \sum_{\mathbf{k}} c_{\mathbf{k}} \tau m_U(B_\mathbf{k})\|f\|_\infty\\
    &\ll \mathcal S(f) \tau.
\end{align*}
Integrating the above over $[0,\tau]$ and multiplying the above by $1/\tau$ yields
\begin{equation}\label{approx}
\left|\int_{N}\varphi(n(\mathbf{z}))f(a_tn(\mathbf{z})x)\,d\mathbf{z} - A\right|\ll \mathcal S(f)\tau\end{equation}

We choose $\tau$ to be of the form $e^{(\frac{1}{l}-2)t}$ for $l\ge 2.$ Then, by equation (\ref{approx}) and noting that $|\log(b)/4|\le t$ we have 
$$\left|\int_{N}\varphi(n(\mathbf{z}))f(a_t n(\mathbf{z}))\,d\mathbf{z} - A\right|\ll \mathcal S(f)b^{1/8},$$
and, as such, we have reduced our analysis to that of $A$.

By the Cauchy-Schwarz inequality, we have
$$|A|^2\le\int_{N} \left(\frac{1}{\tau}\int_{[0,\tau]} f(a_tu_{s} n(\mathbf{z}))\,ds\right)^2\varphi(n(\mathbf{z}))\,d\mathbf{z}.$$

Now that all the terms we are dealing with are non-negative, by utilizing the upper bound on $\varphi$ we deduce
\begin{align}\label{ineq: main}
|A|^2 &\le \frac{C4^{d-k}b^{-\varepsilon}}{\tau^2} \int_0 ^\tau \int_0 ^\tau\int_{[0,1]} \hat f_{s_1,s_2}(a_tn(\mathbf{z})x)\,d\mathbf{z}\,ds_1\,ds_2
\end{align}

where $\hat f_{s_1,s_2}(y)=f(a_t u_{s_1}a_{-t}y)f(a_tu_{s_2}a_{-t}y)$.

Observe that
$$\mathcal S(\hat f_{s_1,s_2})\le (e^{t}\tau)^\star\mathcal S(f)^2=e^{\frac{\star}{l}t}\mathcal S(f)^2.$$

By the above and by choosing a fixed $l'$  large enough so that $l =2\star l'\ref{K:effec}^{-1} \ge 2$, we have
$\mathcal S(\hat f_{s_1,s_2})\le e^{\ref{K:effec}t/2}\mathcal S(f)^2.$

Combining this observation and by our assumption on the effective equidistribution of the unstable foliation we deduce
$$
\int_{[0,1]^d} \hat f_{s_1,s_2}(a_tn(\mathbf{z})x)\,d\mathbf{z}\ll \int_X \hat f_{s_1,s_2}\,dm_X+e^{-\ref{K:effec}t/2}\mathcal S(f)^2\eta^{-L}.
$$

Combining the above with Equation (\ref{ineq: main}) yields
\begin{equation}\label{ineq: two terms}
    |A|^2\ll\frac{1}{\tau^2}\int_0 ^\tau \int_0 ^\tau \bigg(Cb ^{-\varepsilon}\int_X \hat f_{s_1,s_2}\,dm_X+Cb ^{-\varepsilon}e^{-\ref{K:effec}t/2}\mathcal S(f)^2\eta^{-L}\bigg)\,ds_1\,ds_2.
\end{equation}

Now we analyze the first term on the right of inequality (\ref{ineq: two terms}). We do this by splitting the integral over $[0,\tau]^2$ into two regions; one where $|s_1-s_2|>e^{-t}e^{t/2l}$ and so we can take advantage of the decay of correlations (Corollary \ref{Cor:Decay of Cor}) and one where $|s_1-s_2|<e^{-t}e^{t/2l}$ where we use that the region is of small measure. 

When $|s_1-s_2|>e^{-t}e^{t/2l}$, then by the decay of correlations Proposition \ref{Cor:Decay of Cor}  applied to the simple Lie group $G$, and observing that $d(e,u_{e^{t}(s_1-s_2)})\ge e^{t}|s_1-s_2|>e^{t/2l}>\frac{t}{2l}$ , we have
\begin{align*}
\int_X \hat f_{s_1,s_2}( x)\,dm_X &\ll e^{-\kappa_ X e^{2t}|s_1-s_2|} \mathcal S(f)^2\\
 &\le e^{-\kappa_ X \frac{t}{2l}} \mathcal S(f)^2.
\end{align*}
Thus, after integrating over $[0,\tau]^2$ and dividing by $\tau^2$, we have the following bound on the first term on the right of (\ref{ineq: two terms}) of
$$Cb ^{-\varepsilon}\int_X \hat f_{s_1,s_2}( x)\,dm_ X\ll Cb^{-\varepsilon}e^{-\kappa_X \frac{t}{2l}} \mathcal S(f)^2$$
whenever $|s_1-s_2|>e^{-t}e^{\frac{t}{2l}}.$ 


Finally, we consider the region close to the diagonal ($|s_1-s_2|<e^{-t}e^{t/2l}=\tau e^{-t/2l}$). Notice this region has area $2\tau^2e^{-t/2l}.$ By invariance of $m_ X$ and using that the $\|\cdot\|_\infty$ is dominated by $\mathcal S(\cdot)$ we obtain
$$\int_X\hat f_{s_1,s_2}( x) \,dm_ X =\int_X f(u_{e^{2t}(s_1-s_2)} x)f( x)\,dm_ X\le 2\tau^2e^{-t/2l}\|f\|_\infty ^2\le 2\tau^2e^{-t/2l}\mathcal S(f)^2.$$
Integrating over the region $|s_1-s_2|<e^{-t}e^{t/2l}$ and dividing everything by $\tau^2$ we get that the first term of the right side of inequality (\ref{ineq: two terms}) is smaller than
$$\ll \frac{1}{\tau^2}\mathcal S(f)^2\cdot 2\tau^2e^{-t/2l}\ll \mathcal S(f)^2\cdot e^{-t/2l}.$$
In total, we get this estimate of the right side of inequality (\ref{ineq: two terms}),
\begin{align*}
    |A|^2&\ll C\eta^{-L}\mathcal S(f)^2b^{-\varepsilon }\left(e^{-\kappa_Xt/2l}+e^{-t/2l}+ e^{-\ref{K:effec}t/2}\right)\\&\ll C
    \eta^{-L}\mathcal S(f)^2b^{-\varepsilon }e^{-\kappa_X t/2l}.
\end{align*}

We recall that we have $l=2\star \ell'\ref{K:effec}^{-1}$ and $e^{-4t}\le b$.  By choosing $$ \varepsilon\le \frac{\kappa_X \ref{K:effec}}{128\ell'}$$  we get 
$$|A|^2\ll  C\mathcal S(f)^2 \eta^{-L}e^{-t\kappa_X \ref{K:effec}/32l'}$$
and this finishes the proof after noting that $\ref{K:effec}=O(\kappa_X)$.
\end{proof}

\nocite{*}


\begin{thebibliography}{LMWY25}

\bibitem[BBKS23]{BBKS23}
Mikhail Belolipetsky, Nikolay Bogachev, Alexander Kolpakov, and Leone Slavich.
\newblock Subspace stabilisers in hyperbolic lattices.
\newblock {\em arXiv preprint arXiv:2105.06897}, 2023.

\bibitem[BFLM11]{BFLM}
J.~Bourgain, A.~Furman, E.~Lindenstrauss, and S.~Mozes.
\newblock Stationary measures and equidistribution for orbits of nonabelian
  semigroups on the torus.
\newblock {\em Journal of the American Mathematical Society}, 24(1):231--280,
  2011.

\bibitem[BFMS21]{MR4250391}
Uri Bader, David Fisher, Nicholas Miller, and Matthew Stover.
\newblock Arithmeticity, superrigidity, and totally geodesic submanifolds.
\newblock {\em Ann. of Math. (2)}, 193(3):837--861, 2021.

\bibitem[Bou10]{MR2763000}
Jean Bourgain.
\newblock The discretized sum-product and projection theorems.
\newblock {\em J. Anal. Math.}, 112:193--236, 2010.

\bibitem[Bow93]{MR1218098}
B.~H. Bowditch.
\newblock Geometrical finiteness for hyperbolic groups.
\newblock {\em J. Funct. Anal.}, 113(2):245--317, 1993.

\bibitem[Dan79]{MR530631}
S.~G. Dani.
\newblock On invariant measures, minimal sets and a lemma of {M}argulis.
\newblock {\em Invent. Math.}, 51(3):239--260, 1979.

\bibitem[DM93]{MR1237827}
S.~G. Dani and G.~A. Margulis.
\newblock Limit distributions of orbits of unipotent flows and values of
  quadratic forms.
\newblock In {\em I. {M}. {G}elfand {S}eminar}, volume 16, Part 1 of {\em Adv.
  Soviet Math.}, pages 91--137. Amer. Math. Soc., Providence, RI, 1993.

\bibitem[Edw21]{MR4277274}
Samuel~C. Edwards.
\newblock On the rate of equidistribution of expanding translates of
  horospheres in {$\Gamma\backslash G$}.
\newblock {\em Comment. Math. Helv.}, 96(2):275--337, 2021.

\bibitem[EMM98]{MR1609447}
Alex Eskin, Gregory Margulis, and Shahar Mozes.
\newblock Upper bounds and asymptotics in a quantitative version of the
  {O}ppenheim conjecture.
\newblock {\em Ann. of Math. (2)}, 147(1):93--141, 1998.

\bibitem[EMM15]{MR3418528}
Alex Eskin, Maryam Mirzakhani, and Amir Mohammadi.
\newblock Isolation, equidistribution, and orbit closures for the {${\rm
  SL}(2,\Bbb R)$} action on moduli space.
\newblock {\em Ann. of Math. (2)}, 182(2):673--721, 2015.

\bibitem[EMMV20]{MR4066475}
M.~Einsiedler, G.~Margulis, A.~Mohammadi, and A.~Venkatesh.
\newblock Effective equidistribution and property {$(\tau)$}.
\newblock {\em J. Amer. Math. Soc.}, 33(1):223--289, 2020.

\bibitem[EMV09]{MR2507639}
M.~Einsiedler, G.~Margulis, and A.~Venkatesh.
\newblock Effective equidistribution for closed orbits of semisimple groups on
  homogeneous spaces.
\newblock {\em Invent. Math.}, 177(1):137--212, 2009.

\bibitem[FLMS21]{FLMS}
David Fisher, Jean-Francois Lafont, Nicholas Miller, and Matthew Stover.
\newblock Finiteness of maximal geodesic submanifolds in hyperbolic hybrids.
\newblock {\em J. Eur. Math. Soc.}, 23(11):3591--3623, 2021.

\bibitem[GA14]{Commclasses}
T.~Gelander and Levit A.
\newblock Counting commensurability classes of hyperbolic manifolds.
\newblock {\em Geom. Funct. Anal}, 24:1431–1447, 2014.

\bibitem[GGW22]{GGW}
Shengwen Gan, Shaoming Guo, and Hong Wang.
\newblock A restricted projection problem for fractal sets in $\mathbb{R}^n$,
  2022.

\bibitem[GPS88]{MR0932135}
M.~Gromov and I.~Piatetski-Shapiro.
\newblock Nonarithmetic groups in {L}obachevsky spaces.
\newblock {\em Inst. Hautes \'{E}tudes Sci. Publ. Math.}, (66):93--103, 1988.

\bibitem[Kat20]{katz}
A.~Katz.
\newblock Margulis' inequality for translates of horospherical orbits and
  applications, 2020.

\bibitem[Kau68]{MR248779}
Robert Kaufman.
\newblock On {H}ausdorff dimension of projections.
\newblock {\em Mathematika}, 15:153--155, 1968.

\bibitem[KM96]{MR1359098}
D.~Y. Kleinbock and G.~A. Margulis.
\newblock Bounded orbits of nonquasiunipotent flows on homogeneous spaces.
\newblock In {\em Sina\u{\i}'s {M}oscow {S}eminar on {D}ynamical {S}ystems},
  volume 171 of {\em Amer. Math. Soc. Transl. Ser. 2}, pages 141--172. Amer.
  Math. Soc., Providence, RI, 1996.

\bibitem[KM98]{MR1652916}
D.~Y. Kleinbock and G.~A. Margulis.
\newblock Flows on homogeneous spaces and {D}iophantine approximation on
  manifolds.
\newblock {\em Ann. of Math. (2)}, 148(1):339--360, 1998.

\bibitem[KO21]{MR4330023}
Dubi Kelmer and Hee Oh.
\newblock Shrinking targets for the geodesic flow on geometrically finite
  hyperbolic manifolds.
\newblock {\em J. Mod. Dyn.}, 17:401--434, 2021.

\bibitem[KOV17]{kenmki2017marstrandtype}
Antti K\"{a}enm\"{a}ki, Tuomas Orponen, and Laura Venieri.
\newblock A {M}arstrand-type restricted projection theorem in $\mathbb{R}^{3}$,
  2017.

\bibitem[Lin24]{ZL24}
Zuo Lin.
\newblock Polynomial effective density in quotient of
  $\mathrm{SL}_2(\mathbb{Q}_p) \times \mathrm{SL}_2(\mathbb{Q}_p)$.
\newblock {\em arXiv preprint arXiv:2404.13931}, 2024.

\bibitem[LM23]{MR4549089}
E.~Lindenstrauss and A.~Mohammadi.
\newblock Polynomial effective density in quotients of {$\Bbb H^3$} and {$\Bbb
  H^2\times\Bbb H^2$}.
\newblock {\em Invent. Math.}, 231(3):1141--1237, 2023.

\bibitem[LMW22]{LMW2}
E.~Lindenstrauss, A.~Mohammadi, and Z.~Wang.
\newblock Effective equidistribution for some one parameter unipotent flows.
\newblock {\em arXiv preprint arXiv:2211.11099}, 2022.

\bibitem[LMWY25]{LMWY25}
E.~Lindenstrauss, A.~Mohammadi, Z.~Wang, and L.~Yang.
\newblock Effective equidistribution in rank 2 homogeneous spaces and values of
  quadratic forms.
\newblock {\em arXiv preprint arXiv:2503.21064}, 2025.

\bibitem[MM22]{MR4374970}
Amir Mohammadi and Gregorii Margulis.
\newblock Arithmeticity of hyperbolic 3-manifolds containing infinitely many
  totally geodesic surfaces.
\newblock {\em Ergodic Theory Dynam. Systems}, 42(3):1188--1219, 2022.

\bibitem[MO23]{MR4556221}
Amir Mohammadi and Hee Oh.
\newblock Isolations of geodesic planes in the frame bundle of a hyperbolic
  3-manifold.
\newblock {\em Compos. Math.}, 159(3):488--529, 2023.

\bibitem[PS00]{SchPr}
Yuval Peres and Wilhelm Schlag.
\newblock Smoothness of projections, {B}ernoulli convolutions, and the
  dimension of exceptions.
\newblock {\em Duke Math. J.}, 102(2):193--251, 2000.

\bibitem[PYZ22]{PYZ}
Malabika Pramanik, Tongou Yang, and Joshua Zahl.
\newblock A {F}urstenberg-type problem for circles, and a {K}aufman-type
  restricted projection theorem in $\mathbb{R}^3$, 2022.

\bibitem[Rai12]{10.1093/imrn/rns151}
Jean Raimbault.
\newblock A note on maximal lattice growth in so(1,n).
\newblock {\em International Mathematics Research Notices},
  2013(16):3722--3731, 06 2012.

\bibitem[Rem36]{Remez}
E.J. Remez.
\newblock Sur une propri\'{e}te des polyn\^{o}mes de tchebycheff.
\newblock {\em Comm. Inst. Sci. Kharkow}, 13:93--95, 1936.

\bibitem[San23]{S23}
A.~Sanchez.
\newblock Effective equidistribution of large dimensional measures on affine
  invariant submanifolds.
\newblock {\em ar{X}iv:2306.06740}, 2023.

\bibitem[Sch03]{Schlag}
Wilhelm Schlag.
\newblock On continuum incidence problems related to harmonic analysis.
\newblock {\em Journal of Functional Analysis}, 201:480--521, 07 2003.

\bibitem[Sha96]{MR1403756}
Nimish~A. Shah.
\newblock Limit distributions of expanding translates of certain orbits on
  homogeneous spaces.
\newblock {\em Proc. Indian Acad. Sci. Math. Sci.}, 106(2):105--125, 1996.

\bibitem[SS24]{SS22}
Anthony Sanchez and Juno Seong.
\newblock An avoidance principle and {M}argulis functions for expanding
  translates of unipotent orbits.
\newblock {\em J. Mod. Dyn.}, 20:409--439, 2024.

\bibitem[TW23]{MR4611749}
Nattalie Tamam and Jacqueline~M. Warren.
\newblock Effective equidistribution of horospherical flows in infinite volume
  rank-one homogeneous spaces.
\newblock {\em Ergodic Theory Dynam. Systems}, 43(8):2780--2840, 2023.

\bibitem[Wol00]{Wolff}
T.~Wolff.
\newblock Local smoothing type estimates on {$L^p$} for large {$p$}.
\newblock {\em Geom. Funct. Anal.}, 10(5):1237--1288, 2000.

\bibitem[Yan16]{MR3556973}
Lei Yang.
\newblock Expanding curves in {${\rm T}^1(\Bbb H^n)$} under geodesic flow and
  equidistribution in homogeneous spaces.
\newblock {\em Israel J. Math.}, 216(1):389--413, 2016.

\bibitem[Yan22]{MR4372213}
Lei Yang.
\newblock Equidistribution of expanding translates of curves in homogeneous
  spaces with the action of {$({\rm SO}(n,1))^k$}.
\newblock {\em Acta Math. Sin. (Engl. Ser.)}, 38(1):205--224, 2022.

\end{thebibliography}
\end{document}